\definecolor{ttzzqq}{rgb}{0.2,0.6,0}
\definecolor{qqttcc}{rgb}{0,0.2,0.8}
\definecolor{qqttzz}{rgb}{0,0.2,0.6}
\definecolor{ffqqqq}{rgb}{1,0,0}
\definecolor{qqwuqq}{rgb}{0,0.39,0}
\definecolor{zzttqq}{rgb}{0.6,0.2,0}
\definecolor{qqqqff}{rgb}{0,0,1}
\definecolor{ttttqq}{rgb}{0.2,0.2,0}
\definecolor{qqwwtt}{rgb}{0,0.4,0.2}
\definecolor{ubqqys}{rgb}{0.29,0,0.51}
\definecolor{wwttqq}{rgb}{0.4,0.2,0}
\definecolor{uuuuuu}{rgb}{0.27,0.27,0.27}
\definecolor{qqzzff}{rgb}{0,0.6,1}
\definecolor{xdxdff}{rgb}{0.49,0.49,1}
\definecolor{ccwwqq}{rgb}{0.8,0.4,0}
\definecolor{ttqqqq}{rgb}{0.2,0,0}
\definecolor{qqzzcc}{rgb}{0,0.6,0.8}
\newcommand{\diff}[2]{\frac{\partial {#1} }{\partial {#2} } }
\newcommand{\eref}[1]{$(\ref{#1})$}
\newcommand{\Ni}{ N_i}
\newcommand{\Nj}{ N_j}
\newcommand{\halb}{\frac{1}{2}} 
\newcommand{\TT}{\mbox{\boldmath$T$}}
\newcommand{\QQ}{\mbox{\boldmath$R$}}
\newcommand{\R}{\mathbb{R}}
\newcommand{\B}{\mathcal{B}}
\newcommand{\E}{\mathbf{E}}
\newcommand{\nv}{\vec{n}}
\newcommand{\p}{\mathbb{\wp}}
\newcommand{\st}{ {st}}
\newcommand{\xx}{ \mathbf{x}}
\newcommand{\bphi}{ \bm{\phi}}
\newcommand{\bpsi}{ \bm{\psi}}
\newcommand{\dx}{ d\xx}
\newcommand{\dxt}{ d\xx \, dt \,}
\newcommand{\D}{\bm{\mathcal{D}}}
\newcommand{\Q}{\bm{\mathcal{Q}}}
\newcommand{\Ss}{ \bm{\mathcal{S}}}
\newcommand{\M}{\bm{M}}
\newcommand{\nextud}{\vec{n}_{i,j}}
\newcommand{\nstd}{\vec{n}_{j}}
\renewcommand{\vec}{\mathbf} 
\newtheorem{proof}{Proof}
\newtheorem{theorem}{Theorem}
\journal{Journal of Computational Physics}
\begin{document}

\begin{frontmatter}



\title{Arbitrary high order accurate space-time discontinuous Galerkin finite element schemes on staggered unstructured meshes for linear elasticity}   

\author[1]{Maurizio Tavelli\fnref{label1}}
\author[1]{Michael Dumbser \corref{corr1} \fnref{label2}}
\address[1]{Department of Civil, Environmental and Mechanical Engineering, University of Trento, Via Mesiano 77, I-38123 Trento, Italy}

\fntext[label1]{\tt m.tavelli@unitn.it (M.~Tavelli)}
\fntext[label2]{\tt michael.dumbser@unitn.it (M.~Dumbser)}
\begin{abstract}
 In this paper we propose a new high order accurate space-time discontinuous Galerkin (DG) finite element scheme for the solution of the linear elastic wave equations in first order velocity-stress  formulation in two and three-space dimensions on \textit{staggered} unstructured triangular and tetrahedral meshes. The method reaches arbitrary high order of accuracy in both space and time via the use  of space-time basis and test functions. Within the staggered mesh formulation, we define the discrete velocity field in the control volumes of a primary mesh, while the discrete stress tensor is defined on a face-based staggered dual mesh. The space-time DG formulation leads to an \textit{implicit} scheme that requires the solution of a linear system for the unknown degrees of freedom at the new time level. 
The number of unknowns is reduced at the aid of the Schur complement, so that in the end only a linear system for the degrees of freedom of the velocity  field needs to be solved, rather than a system  that involves both stress and velocity. Thanks to the use of a spatially staggered mesh, the stencil of the final velocity system involves only the element and its direct neighbors and the linear system can be efficiently solved via matrix-free iterative methods. Despite the necessity to solve a linear system, the numerical scheme is still computationally efficient. The chosen discretization and the  linear  nature of the governing PDE system lead to an unconditionally stable scheme, which allows large time steps even for low quality meshes that contain so-called sliver elements.  
The fully discrete staggered space-time DG method is proven to be \textit{energy stable} for any order of accuracy, for any mesh and for any time step size. 
For the particular case of a simple Crank-Nicolson time discretization and homogeneous material, the final velocity system can be proven to be symmetric and positive definite and in this 
case the scheme is also exactly \textit{energy preserving}. The new scheme is applied to several test problems in two and three space dimensions, providing also a comparison with high order explicit 
ADER-DG schemes. 
\end{abstract}

\begin{keyword}
high order schemes \sep 
space-time discontinuous Galerkin methods \sep 
staggered unstructured meshes \sep
energy stability \sep 
large time steps  \sep
linear elasticity 


\end{keyword}

\end{frontmatter}


\section{Introduction}
\label{sec.intro} 
Even nowadays the accurate simulation of elastic wave propagation in heterogeneous media involving complex geometries is a very challenging task. 
In the past several numerical methods have been developed in order to solve the linear elasticity equations. Some classical finite difference methods can be found in \cite{Madariaga1976, Virieux1984, Virieux1986} and further extensions, see e.g. \cite{Levander1988, Mora1989, Moczo2002, Igel1995, Tessmer1995, Moczo2015, Moczo2017}. Concerning the class of pseudo-spectral methods we refer  the reader to \cite{Tessmer1994, Igel1999}. The spectral finite element method, originally introduced by Patera in \cite{Patera1984}, was applied to linear elastic wave propagation 
in a well-known series of papers, see e.g. \cite{Priolo1994,Komatitsch1998,Seriani1998,Komatitsch1999,Komatitsch2002} and references therein. 

A major challenge in the numerical simulation of linear elastic waves is the ability of the numerical scheme to accurately propagate complex wave patterns over long distances and 
for very long times. Therefore, the use of high order schemes in both space and time is necessary. For a quantitative accuracy analysis of high order schemes applied to elastic wave 
propagation, see e.g. \cite{Hermann2008,Moczo2010a}. The analysis is based on the misfit criteria developed in \cite{Moczo2006,Moczo2009}. For an alternative study of high order
DG schemes applied to wave propagation problems, see \cite{RemacleWave}. 

Another challenge is the discretization of complex geometries including both, complex surface topography as well as complex sub-surface fault structures. In this case, the use of 
unstructured simplex meshes composed of triangles or tetrahedra seems to be beneficial concerning the problem of mesh generation in complex geometries. Concerning high order explicit 
discontinuous Galerkin (DG) finite element schemes for linear elastic wave propagation on unstructured general meshes the reader is referred to \cite{gij1, gij2, gij3, gij4, gij5}
and to \cite{GroteDG, Antonietti1, Antonietti2}. 
However, since the previous methods are \textit{explicit}, they are only stable under a CFL-type stability condition on the time step that depends on the mesh quality as well as the 
polynomial approximation 
degree used. In particular, unstructured simplex meshes for complex 3D geometries may contain so-called sliver elements, which are tiny elements with very bad aspect ratio and which look
like needles or thin plates. In the case of explicit time discretizations, such elements can be efficiently treated only at the aid of time-accurate local time stepping (LTS), see e.g.
\cite{gij5,Taub2009,GroteLTS1,GroteLTS2}. In this paper, we try to solve this problem in a different way using an efficient high order accurate \textit{implicit} time discretization. 

Our work is inspired by a new class of high order accurate semi-implicit discontinuous Galerkin finite element schemes on staggered meshes recently introduced in 
\cite{DumbserCasulli,2DSIUSW,2STINS,3DSIINS,Fambri2016,3DSICNS,AMRDGSI} for the numerical solution of the shallow water equations, the incompressible and the compressible Navier-Stokes equations. 
Being semi-implicit, the previous methods allow large time steps. Furthermore, the use of an edge-based staggered grid allows to connect the discrete divergence operator with 
the discrete gradient operator. This leads to some interesting properties of the final pressure system that needs to be solved, which becomes symmetric and positive 
definite. The use of staggered meshes is state of the art for many finite difference schemes used in computational fluid dynamics \cite{markerandcell,chorin1,chorin2,Bell1989,CasulliCheng1992,patankar,vanKan,BalsaraSpicer1999,Balsara2001b,Casulli2014,DumbserCasulli2016} as well as for seismic wave propagation \cite{Moczo2002,Moczo2010b,Puente3,Puente4}. 
However, at present staggered meshes are still almost unknown in the context of high order discontinuous Galerkin finite element methods for wave propagation. 
Apart from the above-mentioned references on semi-implicit staggered DG schemes 
\cite{DumbserCasulli,2DSIUSW,2STINS,3DSIINS,Fambri2016,3DSICNS,AMRDGSI}, the authors are only aware of \cite{StaggeredDG,StaggeredDG2,StaggeredDGCE1,StaggeredDGCE2,StaggeredDGCE3} and references 
therein concerning high order DG schemes for wave propagation using edge-based staggered grids. For central DG schemes, which use a vertex-based grid staggering, the reader is referred to 
\cite{CentralDG1,CentralDG2}. However, none of those references uses space-time discontinuous Galerkin finite elements, where the basis and test functions depend not only on space, but both
on space and time. The concept of space-time DG schemes was introduced by van der Vegt et al. for computational fluid dynamics in \cite{spacetimedg1,spacetimedg2,KlaijVanDerVegt,Rhebergen2012,Rhebergen2013} and has been subsequently analyzed e.g. in \cite{Balazsova1,Balazsova2}. The first application of space-time DG schemes to elastodynamics on collocated grids has been reported in \cite{Antonietti3,Antonietti4}, but to the best of our knowledge there exists no space-time DG scheme for the linear elastic wave equations on staggered grids so far. It is the aim of this paper to design and
analyze the properties of such methods. 

More precisely, in this paper we extend the idea of staggered semi-implicit space-time discontinuous Galerkin methods for the Navier-Stokes equations \cite{2STINS,3DSIINS,3DSICNS,Fambri2016} 
to linear elasticity. 
While the velocity field is discretized on the main grid, the stress tensor is defined  
on a face-based staggered \textit{dual} mesh. The governing PDE system is linear and all terms are taken implicitly. Inserting the discrete evolution equations for the stress tensor into the discrete momentum equation leads to one single linear system for the velocity field via the application of the Schur complement. Once the velocity field at the new time is known, one can readily update the 
stress tensor using an explicit formula. The good properties of the main system already observed in \cite{3DSIINS, 3DSICNS} are achieved also in this case. The resulting numerical scheme is shown to be  \textit{energy stable} for any polynomial degree in space and time. A remarkable particular case can be obtained by using arbitrary high order polynomials in space combined with a second order Crank-Nicolson time discretization. For this special case the method becomes exactly energy preserving and the main system becomes symmetric and positive definite. We also present a simple and efficient 
physics-based preconditioner that is useful in the presence of sliver elements. 

The rest of this paper is organized as follows: in Section \ref{sec.pde} we present the governing PDE system and in Section \ref{sec.grid} we introduce the staggered grid that is used in our approach, as well as the chosen basis functions. In Section \ref{sec.scheme} we present the numerical scheme and analyze its properties in Section \ref{sec_5}. In Section \ref{sec.results} we show numerical results for several test problems in two and three space dimensions. We compare all numerical results obtained with our new high order staggered space-time DG scheme with those obtained by a high order explicit ADER-DG scheme on unstructured meshes. The paper closes with some concluding remarks and an outlook to future work in Section \ref{sec.concl}.  

\section{Governing equations}
\label{sec.pde}
Based on the theory of linear elasticity, see e.g. \cite{BedfordDrumheller}, the governing partial differential equations for the wave propagation in a linear elastic medium without 
attenuation can be written in compact first order velocity-stress formulation based on the Hooke law and the momentum conservation law. They read 
\begin{eqnarray}
\diff{\bm{\sigma}}{t}- \E \cdot \nabla \vec{v} = \bm{S}_\sigma, \label{eq:1.1} \\
\diff{\rho\vec{v}}{t}-\nabla \cdot \bm{\sigma} = \rho \bm{S}_v,	\label{eq:1.2} 
\end{eqnarray}
where $\rho$ is the mass density, $\bm{\sigma}=\bm{\sigma}^\top$ is the symmetric stress tensor, $\vec{{v}}=(u,v,w)$ is the velocity field, $\bm{S}_v$ and $\bm{S}_\sigma$ 
are volume sources and $\E$ denotes the usual rank 4 stiffness tensor representing the linear material behavior according to the Hooke law $ \sigma_{ij} = E_{ijkl} \epsilon_{kl}$, 
where $\epsilon_{kl}=\epsilon_{lk}$ is the symmetric strain tensor. 
The connection between the strain \textit{rate} tensor and the velocity gradient is $\partial_t \epsilon_{ij} = \dot{\epsilon}_{ij} = \halb \left( \partial_j v_i + \partial_i v_j \right)$. 
It is well-known that the stiffness tensor $\E$ has the following so-called \textit{minor symmetries} $E_{ijkl} = E_{jikl} = E_{ijlk}$, due to the symmetries of the stress and the strain 
tensor, and the \textit{major symmetry} $E_{ijkl} = E_{klij}$, hence it can have at most 21 independent components, and not 81. From the minor symmetries of $\E$ follows that  
$E_{ijkl} \partial_t \epsilon_{kl} = \halb E_{ijkl} \partial_l v_k + \halb E_{ijlk} \partial_k v_l = E_{ijkl} \partial_l v_k = \E \cdot \nabla \vec{v}$. 
Throughout the paper we use the Einstein summation convention over repeated indices. The symmetric stress tensor $\bm{\sigma}$ is 
\begin{eqnarray}
	\bm{\sigma} = \left(
	\begin{array}{ccc}
		\sigma_{xx} & \sigma_{xy} & \sigma_{xz} \\
		\sigma_{xy} & \sigma_{yy} & \sigma_{yz} \\
		\sigma_{xz} & \sigma_{yz} & \sigma_{zz} \\ 
	\end{array} \right) = \bm{\sigma}^T.
\label{eq:2}
\end{eqnarray}
The normal stress components along the $x$, $y$ and $z$ directions are given by $\sigma_{xx}$, $\sigma_{yy}$ and $\sigma_{zz}$, while the shear stresses are represented by $\sigma_{xy}$, $\sigma_{xz}$ 
and $\sigma_{yz}$. Due to its symmetry the stress tensor $\bm{\sigma}$ can be written as a vector in terms of its six independent components as $\tilde{\bm{\sigma}}=(\sigma_{xx},\sigma_{yy},\sigma_{zz},\sigma_{yz},\sigma_{xz},\sigma_{xy})$, where we use the tilde symbol when we refer to the vector of the six independent components of the stress tensor $\bm{\sigma}$. The same notation is also used
for the 6 independent components of the strain tensor, i.e. $\tilde{\bm{\epsilon}} = ({\epsilon}_{xx}, {\epsilon}_{yy}, {\epsilon}_{zz}, {\epsilon}_{yz}, {\epsilon}_{xz}, {\epsilon}_{xy} )$, 
so that the stress-strain relationship can be also written as $\tilde{\bm{\sigma}} = \tilde{\E} \, \tilde{\bm{\epsilon}}$. In this paper we assume $\tilde{\E}$ to be \textit{invertible} so that 
the strain can be computed from the stress as $\tilde{\bm{\epsilon}} = \tilde{\E}^{-1} \, \tilde{\bm{\sigma}}$. From $\tilde{\E}^{-1}$ we define a tensorial object $\E^{-1}=E^{-1}_{ijkl}$ with the 
same symmetries as $E_{ijkl}$ and the property $ E^{-1}_{ijpq} E_{pqkl} = \delta_{ijkl}$. The object $\delta_{ijkl}$ has again the same symmetries as $\E$ and furthermore 
it satisfies $\delta_{ijkl} \sigma_{kl} = \sigma_{ij}$ and thus also $\delta_{ijkl} \sigma_{ij} = \sigma_{kl}$. The entries of $E^{-1}_{ijkl}$ are given by those of $\tilde{\E}^{-1}$ or are scaled by one half, 
and the object $\delta_{ijkl}$ contains only zeros, ones and $\halb$. Their construction is immediate once the inverse $\tilde{\E}^{-1}$ has been computed. 
For \textit{isotropic material}, equation \eref{eq:1.1} can be rewritten in terms of the two Lam\'e constants $\lambda$ and $\mu$ simply as  
\begin{equation}
\partial_t {\bm{\sigma}}  - \lambda \left( \nabla \cdot \vec{{v}} \right) \mathbf{I} - \mu \left( \nabla \vec{{v}} + \nabla \vec{{v}}^T \right) = \bm{S}_\sigma, \label{eq:1.1b} \\
\end{equation} 
with the identity matrix $\mathbf{I}$, or in terms of the vector $\tilde{\bm{\sigma}}$ and the independent components of the strain rate tensor as 
\begin{equation}
	\partial_t {\tilde{\bm{\sigma}}}  - \tilde{\E} \cdot \partial_t \tilde{\bm{\epsilon}}  = \bm{S}_{\tilde{\sigma}},  
\label{eq:3}
\end{equation}
with $\partial_t \tilde{\bm{\epsilon}} = \left( \partial_x u, \partial_y v, \partial_z w, \halb( \partial_z v + \partial_y w ), \halb ( \partial_z u + \partial_x w ), \halb( \partial_y u + \partial_x v) \right)$ and where for isotropic material 
\begin{equation}
\tilde{\E}=\left( 
\begin{array}{cccccc}
	\lambda+2\mu & \lambda & \lambda & 0 & 0 & 0 \\
	\lambda & \lambda+2\mu & \lambda & 0 & 0 & 0 \\ 
	\lambda & \lambda & \lambda+2\mu & 0 & 0 & 0 \\  
	 0 & 0 & 0 & 2 \mu & 0 & 0 \\
	 0 & 0 & 0 & 0 & 2 \mu & 0 \\
	 0 & 0 & 0 & 0 & 0 & 2 \mu 
\end{array}
\right),  \quad   
\tilde{\E}^{-1} = \frac{1}{2 \mu \alpha} 
\left( 
\begin{array}{cccccc}
	2 (\lambda + \mu) & -\lambda & -\lambda & 0 & 0 & 0 \\
	-\lambda & 2 (\lambda + \mu) & -\lambda & 0 & 0 & 0 \\ 
	-\lambda & -\lambda & 2 (\lambda + \mu) & 0 & 0 & 0 \\  
	 0 & 0 & 0 & \alpha & 0 & 0 \\
	 0 & 0 & 0 & 0 & \alpha & 0 \\
	 0 & 0 & 0 & 0 & 0 & \alpha   
\end{array}
\right).  
\label{eq:5}
\end{equation}
with $\alpha = 3 \lambda + 2 \mu $. For a homogeneous material we can assume $\E$ to be a constant in space and time. For non-homogeneous media 
we have $\E=\E(\vec{x})$, which,  however, is still assumed to be a constant in time. 

\section{Staggered unstructured grid and basis functions}
\label{sec.grid}
Throughout this paper we use the same unstructured spatially staggered mesh as the one used in \cite{SINS,2STINS,3DSIINS} for the two and three-dimensional case, respectively. 
In the following section we briefly summarize the grid construction and the main notation for the two dimensional triangular grid. After that, the primary and dual spatial elements 
are extended to the three dimensional case and also to the case of space-time control volumes.

\paragraph{Two space dimensions}
In the two-dimensional case the spatial computational domain $\Omega \subset \mathbb{R}^2$ is covered with a set of $\Ni$ non-overlapping triangular elements $\TT_i$ with $i=1 \ldots \Ni$. By denoting with $\Nj$ the total number of edges, the $j-$th edge will be called $\Gamma_j$. $\B(\Omega)$ denotes the set of indices $j$ corresponding to boundary edges.
The three edges of each triangle $\TT_i$ constitute the set $S_i$ defined by $S_i=\{j \in [1,\Nj] \,\, | \,\, \Gamma_j \mbox{ is an edge of }\TT_i \}$. For every $j\in [1\ldots \Nj]-\B(\Omega)$ there exist two triangles $i_1$ and $i_2$ that share $\Gamma_j$. We assign arbitrarily a left and a right triangle called respectively $\ell(j)$ and $r(j)$ for any $j\in [1\ldots \Nj]-\B(\Omega)$. The standard positive direction is assumed to be from left to right. $\nv_{j}$ denotes the unit normal vector defined on the edge $j$ and oriented with respect to the positive direction according to the previous definition. For every triangular element $i$ and edge $j \in S_i$, the index of the neighbor triangle of element $\TT_i$ that shares the edge $\Gamma_j$ is denoted by $\p(i,j)$.
\par For every $j\in [1, \Nj]-\B(\Omega)$ the quadrilateral dual element associated to $\Gamma_j$ is called $\QQ_j$ and it is defined, in general, by the two barycenter of $\TT_{\ell(j)}$ and $\TT_{r(j)}$ and the two nodes of $\Gamma_j$, see also \cite{Bermudez1998,Bermudez2014,Busto2018,USFORCE,2DSIUSW,StaggeredDG}. We denote by $\TT_{i,j}=\QQ_j \cap \TT_i$ the intersection element for every $i$ and $j \in S_i$. Figure $\ref{fig.1}$ summarizes the used notation, the primal triangular mesh and the dual quadrilateral grid.  
\begin{figure}
    \begin{center}
    \begin{tikzpicture}[line cap=round,line join=round,>=triangle 45,x=0.6373937677053826cm,y=0.6177884615384613cm]
\clip(2.11,-8.53) rectangle (16.23,3.95);
\fill[color=zzttqq,fill=zzttqq,fill opacity=0.1] (5.19,-3.03) -- (9,3) -- (13,-5) -- cycle;
\fill[color=qqwuqq,fill=qqwuqq,fill opacity=0.05] (9,3) -- (14.49,2.37) -- (13,-5) -- cycle;
\fill[color=qqwuqq,fill=qqwuqq,fill opacity=0.05] (9,3) -- (4.13,2.43) -- (5.19,-3.03) -- cycle;
\fill[color=qqwuqq,fill=qqwuqq,fill opacity=0.05] (5.19,-3.03) -- (4.77,-7.83) -- (13,-5) -- cycle;
\fill[color=zzttqq,fill=zzttqq,fill opacity=0.1] (9.27,-1.79) -- (9,3) -- (5.67,0.43) -- (5.19,-3.03) -- cycle;
\fill[color=zzttqq,fill=zzttqq,fill opacity=0.1] (9.27,-1.79) -- (9,3) -- (12.15,1.37) -- (13,-5) -- cycle;
\fill[color=zzttqq,fill=zzttqq,fill opacity=0.1] (5.19,-3.03) -- (7.47,-5.49) -- (13,-5) -- (9.27,-1.79) -- cycle;
\fill[color=qqttzz,fill=qqttzz,fill opacity=0.1] (13,-5) -- (5.19,-3.03) -- (9.27,-1.79) -- cycle;
\draw [color=zzttqq] (5.19,-3.03)-- (9,3);
\draw [color=zzttqq] (9,3)-- (13,-5);
\draw [color=zzttqq] (13,-5)-- (5.19,-3.03);
\draw [color=qqwuqq] (9,3)-- (14.49,2.37);
\draw [color=qqwuqq] (14.49,2.37)-- (13,-5);
\draw [color=qqwuqq] (13,-5)-- (9,3);
\draw [color=qqwuqq] (9,3)-- (4.13,2.43);
\draw [color=qqwuqq] (4.13,2.43)-- (5.19,-3.03);
\draw [color=qqwuqq] (5.19,-3.03)-- (9,3);
\draw [color=qqwuqq] (5.19,-3.03)-- (4.77,-7.83);
\draw [color=qqwuqq] (4.77,-7.83)-- (13,-5);
\draw [color=qqwuqq] (13,-5)-- (5.19,-3.03);
\draw (9.51,-1.19) node[anchor=north west] {$i$};
\draw (12.41,1.67) node[anchor=north west] {$i_1$};
\draw (5.67,0.67) node[anchor=north west] {$i_2$};
\draw (7.47,-5.25) node[anchor=north west] {$i_3$};
\draw (11.25,-0.47) node[anchor=north west] {$j_1$};
\draw (6.95,1.43) node[anchor=north west] {$j_2$};
\draw (8.93,-3.93) node[anchor=north west] {$j_3$};
\draw (4.39,-2.79) node[anchor=north west] {$n_1$};
\draw (13.07,-4.83) node[anchor=north west] {$n_2$};
\draw (9.05,4.03) node[anchor=north west] {$n_3$};
\draw (7.71,0.35) node[anchor=north west] {$\TT_i$};
\draw [color=zzttqq] (9.27,-1.79)-- (9,3);
\draw [color=zzttqq] (9,3)-- (5.67,0.43);
\draw [color=zzttqq] (5.67,0.43)-- (5.19,-3.03);
\draw [color=zzttqq] (5.19,-3.03)-- (9.27,-1.79);
\draw [color=zzttqq] (9.27,-1.79)-- (9,3);
\draw [color=zzttqq] (9,3)-- (12.15,1.37);
\draw [color=zzttqq] (12.15,1.37)-- (13,-5);
\draw [color=zzttqq] (13,-5)-- (9.27,-1.79);
\draw [color=zzttqq] (5.19,-3.03)-- (7.47,-5.49);
\draw [color=zzttqq] (7.47,-5.49)-- (13,-5);
\draw [color=zzttqq] (13,-5)-- (9.27,-1.79);
\draw [color=zzttqq] (9.27,-1.79)-- (5.19,-3.03);
\draw (10.49,1.15) node[anchor=north west] {$\QQ_{j_1}$};
\draw [color=ffqqqq](10.07,-0.83) node[anchor=north west] {$\Gamma_{j_1}$};
\draw [line width=1.6pt,color=ffqqqq] (9,3)-- (13,-5);
\draw [color=qqttzz] (13,-5)-- (5.19,-3.03);
\draw [color=qqttzz] (5.19,-3.03)-- (9.27,-1.79);
\draw [color=qqttzz] (9.27,-1.79)-- (13,-5);
\draw [color=qqttzz](8.35,-2.17) node[anchor=north west] {$\TT_{i,j_3}$};
\draw (5.19,-3.03)-- (5.67,0.43);
\draw (5.67,0.43)-- (9,3);
\draw (9,3)-- (9.27,-1.79);
\draw (9.27,-1.79)-- (5.19,-3.03);
\draw (9,3)-- (12.15,1.37);
\draw (12.15,1.37)-- (13,-5);
\draw (13,-5)-- (9.27,-1.79);
\draw (13,-5)-- (7.47,-5.49);
\draw (7.47,-5.49)-- (5.19,-3.03);
\begin{scriptsize}
\fill [color=qqqqff] (5.19,-3.03) circle (1.5pt);
\fill [color=qqqqff] (9,3) circle (1.5pt);
\fill [color=qqqqff] (13,-5) circle (1.5pt);
\fill [color=qqqqff] (9.27,-1.79) circle (1.5pt);
\fill [color=qqqqff] (14.49,2.37) circle (1.5pt);
\fill [color=qqqqff] (4.13,2.43) circle (1.5pt);
\fill [color=qqqqff] (4.77,-7.83) circle (1.5pt);
\fill [color=qqqqff] (12.15,1.37) circle (1.5pt);
\fill [color=qqqqff] (5.67,0.43) circle (1.5pt);
\fill [color=qqqqff] (7.47,-5.49) circle (1.5pt);
\end{scriptsize}
\end{tikzpicture}
    \caption{Example of a triangular mesh element with its three neighbors and the associated staggered edge-based dual control volumes, together with the notation
    used throughout the paper.}
    \label{fig.1}
		\end{center}
\end{figure}
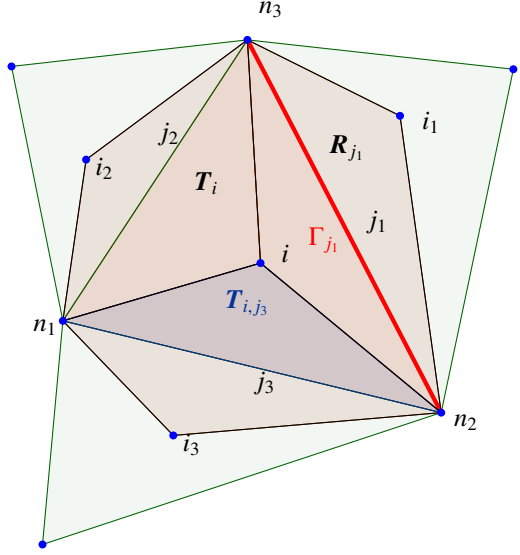
According to \cite{2STINS}, we will call the mesh of triangular elements $\{\TT_i \}_{i \in [1, \Ni]}$ the \textit{main grid} and the quadrilateral grid $\{\QQ_j \}_{j \in [1, N_d]}$ is termed the \textit{dual grid}. 


\paragraph{Three space dimensions}
The definitions given above are then readily extended to three space dimensions with the domain $\Omega \subset \mathbb{R}^3$. 
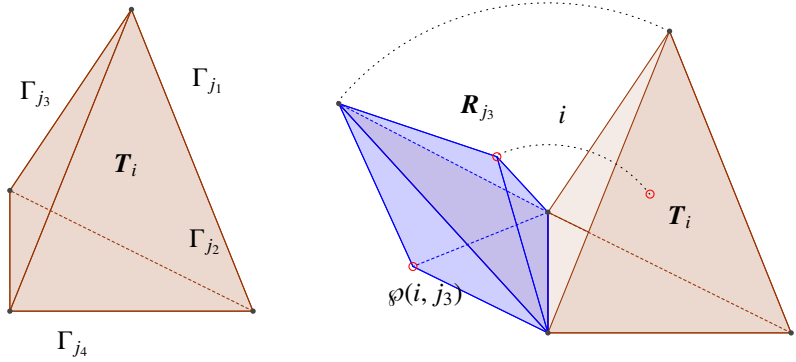
\begin{figure}
    \begin{center}
    \begin{tikzpicture}[line cap=round,line join=round,>=triangle 45,x=0.8cm,y=0.8cm]
\clip(3.26,-5.21) rectangle (8.37,1.57);
\fill[color=zzttqq,fill=zzttqq,fill opacity=0.1] (4,-4) -- (8,-4) -- (4,-2) -- cycle;
\fill[color=zzttqq,fill=zzttqq,fill opacity=0.1] (4,-2) -- (6,1) -- (8,-4) -- cycle;
\fill[color=zzttqq,fill=zzttqq,fill opacity=0.1] (4,-2) -- (6,1) -- (4,-4) -- cycle;
\fill[color=zzttqq,fill=zzttqq,fill opacity=0.1] (6,1) -- (4,-4) -- (8,-4) -- cycle;
\fill[color=zzttqq,fill=zzttqq,fill opacity=0.1] (16,-4) -- (20,-4) -- (16,-2) -- cycle;
\fill[color=zzttqq,fill=zzttqq,fill opacity=0.1] (16,-2) -- (18,1) -- (20,-4) -- cycle;
\fill[color=zzttqq,fill=zzttqq,fill opacity=0.1] (18,1) -- (16,-4) -- (20,-4) -- cycle;
\fill[color=zzttqq,fill=zzttqq,fill opacity=0.1] (16,-2) -- (12.56,-0.2) -- (16,-4) -- cycle;
\fill[color=qqqqff,fill=qqqqff,fill opacity=0.1] (12.56,-0.2) -- (15.16,-1.08) -- (16,-4) -- cycle;
\fill[color=qqqqff,fill=qqqqff,fill opacity=0.1] (15.16,-1.08) -- (16,-2) -- (12.56,-0.2) -- cycle;
\fill[color=qqqqff,fill=qqqqff,fill opacity=0.1] (15.16,-1.08) -- (16,-4) -- (16,-2) -- cycle;
\fill[color=qqqqff,fill=qqqqff,fill opacity=0.1] (12.56,-0.2) -- (13.78,-2.9) -- (16,-4) -- cycle;
\fill[color=qqqqff,fill=qqqqff,fill opacity=0.1] (13.78,-2.9) -- (16,-2) -- (12.56,-0.2) -- cycle;
\fill[color=qqqqff,fill=qqqqff,fill opacity=0.1] (13.78,-2.9) -- (16,-4) -- (16,-2) -- cycle;
\fill[color=zzttqq,fill=zzttqq,fill opacity=0.15] (26,-6) -- (28,-3) -- (30,-8) -- cycle;
\draw [color=zzttqq] (4,-4)-- (8,-4);
\draw [color=zzttqq] (4,-2)-- (4,-4);
\draw [color=zzttqq] (4,-2)-- (6,1);
\draw [color=zzttqq] (6,1)-- (8,-4);
\draw [dash pattern=on 1pt off 1pt,color=zzttqq] (8,-4)-- (4,-2);
\draw [color=zzttqq] (4,-2)-- (6,1);
\draw [color=zzttqq] (6,1)-- (4,-4);
\draw [color=zzttqq] (4,-4)-- (4,-2);
\draw [color=zzttqq] (6,1)-- (4,-4);
\draw [color=zzttqq] (4,-4)-- (8,-4);
\draw [color=zzttqq] (8,-4)-- (6,1);
\draw [color=zzttqq] (16,-4)-- (20,-4);
\draw [color=zzttqq] (16,-2)-- (16,-4);
\draw [color=zzttqq] (16,-2)-- (18,1);
\draw [color=zzttqq] (18,1)-- (20,-4);
\draw [dash pattern=on 1pt off 1pt,color=zzttqq] (20,-4)-- (16,-2);
\draw [color=zzttqq] (18,1)-- (16,-4);
\draw [color=zzttqq] (16,-4)-- (20,-4);
\draw [color=zzttqq] (20,-4)-- (18,1);
\draw [color=zzttqq] (16,-2)-- (16.67,-2.33);
\draw [shift={(16,-3)},dotted]  plot[domain=1.11:2.46,variable=\t]({1*4.47*cos(\t r)+0*4.47*sin(\t r)},{0*4.47*cos(\t r)+1*4.47*sin(\t r)});
\draw [color=zzttqq] (12.56,-0.2)-- (16,-4);
\draw [color=zzttqq] (16,-4)-- (16,-2);
\draw [shift={(16,-3)},dotted]  plot[domain=0.66:1.98,variable=\t]({1*2.12*cos(\t r)+0*2.12*sin(\t r)},{0*2.12*cos(\t r)+1*2.12*sin(\t r)});
\draw [color=qqqqff] (12.56,-0.2)-- (15.16,-1.08);
\draw [color=qqqqff] (15.16,-1.08)-- (16,-4);
\draw [color=qqqqff] (16,-4)-- (12.56,-0.2);
\draw [color=qqqqff] (15.16,-1.08)-- (16,-2);
\draw [color=qqqqff] (12.56,-0.2)-- (15.16,-1.08);
\draw [color=qqqqff] (15.16,-1.08)-- (16,-4);
\draw [color=qqqqff] (16,-4)-- (16,-2);
\draw [color=qqqqff] (16,-2)-- (15.16,-1.08);
\draw [color=qqqqff] (12.56,-0.2)-- (13.78,-2.9);
\draw [color=qqqqff] (13.78,-2.9)-- (16,-4);
\draw [color=qqqqff] (16,-4)-- (12.56,-0.2);
\draw [dash pattern=on 1pt off 1pt,color=qqqqff] (16,-2)-- (12.56,-0.2);
\draw [color=qqqqff] (12.56,-0.2)-- (13.78,-2.9);
\draw [color=qqqqff] (13.78,-2.9)-- (16,-4);
\draw [color=qqqqff] (16,-4)-- (16,-2);
\draw [dash pattern=on 1pt off 1pt,color=qqqqff] (16,-2)-- (13.78,-2.9);
\draw [color=zzttqq] (26,-8)-- (30,-8);
\draw [color=zzttqq] (26,-6)-- (26,-8);
\draw [color=zzttqq] (26,-6)-- (28,-3);
\draw [color=zzttqq] (28,-3)-- (30,-8);
\draw [color=zzttqq] (30,-8)-- (26,-6);
\draw [color=zzttqq] (26,-6)-- (28,-3);
\draw [color=zzttqq] (28,-3)-- (26,-8);
\draw [color=zzttqq] (26,-8)-- (26,-6);
\draw [color=zzttqq] (28,-3)-- (26,-8);
\draw [color=zzttqq] (26,-8)-- (30,-8);
\draw [color=zzttqq] (30,-8)-- (28,-3);
\draw [dash pattern=on 1pt off 1pt,color=qqqqff] (30,-4)-- (26,-6);
\draw [color=qqqqff] (30,-8)-- (30,-4);
\draw [color=qqqqff] (28,-3)-- (30,-4);
\draw [color=qqqqff] (30,-4)-- (30,-8);
\draw [color=qqqqff] (30,-8)-- (28,-3);
\draw [color=qqqqff] (26,-6)-- (28,-3);
\draw [color=qqqqff] (28,-3)-- (30,-4);
\draw [dash pattern=on 1pt off 1pt,color=ffqqqq] (28.46,-5.11)-- (27.23,-5.8);
\draw [color=qqqqff] (28.67,-4.68)-- (30,-4);
\draw [color=ffqqqq] (27.23,-5.8)-- (27.86,-5.45);
\draw (5.56,-1.25) node[anchor=north west] {$\TT_i$};
\draw (6.85,0.18) node[anchor=north west] {$\Gamma_{j_1}$};
\draw (6.8,-2.49) node[anchor=north west] {$\Gamma_{j_2}$};
\draw (4.02,-0.04) node[anchor=north west] {$\Gamma_{j_3}$};
\draw (4.63,-4.18) node[anchor=north west] {$\Gamma_{j_4}$};
\draw (14.4,0.1) node[anchor=north west] {$\QQ_j$};
\draw (16.02,-0.08) node[anchor=north west] {$i$};
\draw (17.82,-1.74) node[anchor=north west] {$\TT_i$};
\draw (13.21,-3.03) node[anchor=north west] {$\mathbb{\wp}(i,j)$};
\draw [->,dash pattern=on 1pt off 1pt] (28.02,-5.62) -- (28.66,-5.29);
\draw (27.13,-5.78) node[anchor=north west] {$\ell(j)$};
\draw (28.99,-4.46) node[anchor=north west] {$r(j)$};
\draw (28.36,-5.41) node[anchor=north west] {$\vec{n_j}$};
\draw (30.3,-7.62) node[anchor=north west] {$\Gamma_j$};
\begin{scriptsize}
\fill [color=uuuuuu] (4,-4) circle (1.0pt);
\fill [color=uuuuuu] (8,-4) circle (1.0pt);
\fill [color=uuuuuu] (4,-2) circle (1.0pt);
\fill [color=uuuuuu] (6,1) circle (1.0pt);
\fill [color=uuuuuu] (16,-4) circle (1.0pt);
\fill [color=uuuuuu] (20,-4) circle (1.0pt);
\fill [color=uuuuuu] (16,-2) circle (1.0pt);
\fill [color=uuuuuu] (18,1) circle (1.0pt);
\fill [color=uuuuuu] (12.56,-0.2) circle (1.0pt);
\draw [color=ffqqqq] (17.68,-1.7) circle (1.5pt);
\draw [color=ffqqqq] (15.16,-1.08) circle (1.5pt);
\draw [color=ffqqqq] (13.78,-2.9) circle (1.5pt);
\fill [color=uuuuuu] (26,-8) circle (1.0pt);
\fill [color=uuuuuu] (30,-8) circle (1.0pt);
\fill [color=uuuuuu] (26,-6) circle (1.0pt);
\fill [color=uuuuuu] (28,-3) circle (1.0pt);
\fill [color=qqqqff] (30,-4) circle (1.5pt);
\draw[color=qqqqff] (0.19,5.94) node {$L$};
\draw [color=ffqqqq] (28.46,-5.11) circle (1.5pt);
\draw [color=ffqqqq] (27.23,-5.8) circle (1.5pt);
\end{scriptsize}
\end{tikzpicture}
		\begin{tikzpicture}[line cap=round,line join=round,>=triangle 45,x=0.8cm,y=0.8cm]
\clip(11.74,-4.85) rectangle (20.91,2.12);
\fill[color=zzttqq,fill=zzttqq,fill opacity=0.1] (4,-4) -- (8,-4) -- (4,-2) -- cycle;
\fill[color=zzttqq,fill=zzttqq,fill opacity=0.1] (4,-2) -- (6,1) -- (8,-4) -- cycle;
\fill[color=zzttqq,fill=zzttqq,fill opacity=0.1] (4,-2) -- (6,1) -- (4,-4) -- cycle;
\fill[color=zzttqq,fill=zzttqq,fill opacity=0.1] (6,1) -- (4,-4) -- (8,-4) -- cycle;
\fill[color=zzttqq,fill=zzttqq,fill opacity=0.1] (16,-4) -- (20,-4) -- (16,-2) -- cycle;
\fill[color=zzttqq,fill=zzttqq,fill opacity=0.1] (16,-2) -- (18,1) -- (20,-4) -- cycle;
\fill[color=zzttqq,fill=zzttqq,fill opacity=0.1] (18,1) -- (16,-4) -- (20,-4) -- cycle;
\fill[color=zzttqq,fill=zzttqq,fill opacity=0.1] (16,-2) -- (12.56,-0.2) -- (16,-4) -- cycle;
\fill[color=qqqqff,fill=qqqqff,fill opacity=0.1] (12.56,-0.2) -- (15.16,-1.08) -- (16,-4) -- cycle;
\fill[color=qqqqff,fill=qqqqff,fill opacity=0.1] (15.16,-1.08) -- (16,-2) -- (12.56,-0.2) -- cycle;
\fill[color=qqqqff,fill=qqqqff,fill opacity=0.1] (15.16,-1.08) -- (16,-4) -- (16,-2) -- cycle;
\fill[color=qqqqff,fill=qqqqff,fill opacity=0.1] (12.56,-0.2) -- (13.78,-2.9) -- (16,-4) -- cycle;
\fill[color=qqqqff,fill=qqqqff,fill opacity=0.1] (13.78,-2.9) -- (16,-2) -- (12.56,-0.2) -- cycle;
\fill[color=qqqqff,fill=qqqqff,fill opacity=0.1] (13.78,-2.9) -- (16,-4) -- (16,-2) -- cycle;
\fill[color=zzttqq,fill=zzttqq,fill opacity=0.15] (26,-6) -- (28,-3) -- (30,-8) -- cycle;
\draw [color=zzttqq] (4,-4)-- (8,-4);
\draw [color=zzttqq] (4,-2)-- (4,-4);
\draw [color=zzttqq] (4,-2)-- (6,1);
\draw [color=zzttqq] (6,1)-- (8,-4);
\draw [dash pattern=on 1pt off 1pt,color=zzttqq] (8,-4)-- (4,-2);
\draw [color=zzttqq] (4,-2)-- (6,1);
\draw [color=zzttqq] (6,1)-- (4,-4);
\draw [color=zzttqq] (4,-4)-- (4,-2);
\draw [color=zzttqq] (6,1)-- (4,-4);
\draw [color=zzttqq] (4,-4)-- (8,-4);
\draw [color=zzttqq] (8,-4)-- (6,1);
\draw [color=zzttqq] (16,-4)-- (20,-4);
\draw [color=zzttqq] (16,-2)-- (16,-4);
\draw [color=zzttqq] (16,-2)-- (18,1);
\draw [color=zzttqq] (18,1)-- (20,-4);
\draw [dash pattern=on 1pt off 1pt,color=zzttqq] (20,-4)-- (16,-2);
\draw [color=zzttqq] (18,1)-- (16,-4);
\draw [color=zzttqq] (16,-4)-- (20,-4);
\draw [color=zzttqq] (20,-4)-- (18,1);
\draw [color=zzttqq] (16,-2)-- (16.67,-2.33);
\draw [shift={(16,-3)},dotted]  plot[domain=1.11:2.46,variable=\t]({1*4.47*cos(\t r)+0*4.47*sin(\t r)},{0*4.47*cos(\t r)+1*4.47*sin(\t r)});
\draw [color=zzttqq] (12.56,-0.2)-- (16,-4);
\draw [color=zzttqq] (16,-4)-- (16,-2);
\draw [shift={(16,-3)},dotted]  plot[domain=0.66:1.98,variable=\t]({1*2.12*cos(\t r)+0*2.12*sin(\t r)},{0*2.12*cos(\t r)+1*2.12*sin(\t r)});
\draw [color=qqqqff] (12.56,-0.2)-- (15.16,-1.08);
\draw [color=qqqqff] (15.16,-1.08)-- (16,-4);
\draw [color=qqqqff] (16,-4)-- (12.56,-0.2);
\draw [color=qqqqff] (15.16,-1.08)-- (16,-2);
\draw [color=qqqqff] (12.56,-0.2)-- (15.16,-1.08);
\draw [color=qqqqff] (15.16,-1.08)-- (16,-4);
\draw [color=qqqqff] (16,-4)-- (16,-2);
\draw [color=qqqqff] (16,-2)-- (15.16,-1.08);
\draw [color=qqqqff] (12.56,-0.2)-- (13.78,-2.9);
\draw [color=qqqqff] (13.78,-2.9)-- (16,-4);
\draw [color=qqqqff] (16,-4)-- (12.56,-0.2);
\draw [dash pattern=on 1pt off 1pt,color=qqqqff] (16,-2)-- (12.56,-0.2);
\draw [color=qqqqff] (12.56,-0.2)-- (13.78,-2.9);
\draw [color=qqqqff] (13.78,-2.9)-- (16,-4);
\draw [color=qqqqff] (16,-4)-- (16,-2);
\draw [dash pattern=on 1pt off 1pt,color=qqqqff] (16,-2)-- (13.78,-2.9);
\draw [color=zzttqq] (26,-8)-- (30,-8);
\draw [color=zzttqq] (26,-6)-- (26,-8);
\draw [color=zzttqq] (26,-6)-- (28,-3);
\draw [color=zzttqq] (28,-3)-- (30,-8);
\draw [color=zzttqq] (30,-8)-- (26,-6);
\draw [color=zzttqq] (26,-6)-- (28,-3);
\draw [color=zzttqq] (28,-3)-- (26,-8);
\draw [color=zzttqq] (26,-8)-- (26,-6);
\draw [color=zzttqq] (28,-3)-- (26,-8);
\draw [color=zzttqq] (26,-8)-- (30,-8);
\draw [color=zzttqq] (30,-8)-- (28,-3);
\draw [dash pattern=on 1pt off 1pt,color=qqqqff] (30,-4)-- (26,-6);
\draw [color=qqqqff] (30,-8)-- (30,-4);
\draw [color=qqqqff] (28,-3)-- (30,-4);
\draw [color=qqqqff] (30,-4)-- (30,-8);
\draw [color=qqqqff] (30,-8)-- (28,-3);
\draw [color=qqqqff] (26,-6)-- (28,-3);
\draw [color=qqqqff] (28,-3)-- (30,-4);
\draw [dash pattern=on 1pt off 1pt,color=ffqqqq] (28.46,-5.11)-- (27.23,-5.8);
\draw [color=qqqqff] (28.67,-4.68)-- (30,-4);
\draw [color=ffqqqq] (27.23,-5.8)-- (27.86,-5.45);
\draw (5.56,-1.25) node[anchor=north west] {$\TT_i$};
\draw (6.85,0.18) node[anchor=north west] {$\Gamma_{j_1}$};
\draw (6.99,-2.49) node[anchor=north west] {$\Gamma_{j_2}$};
\draw (4.02,-0.04) node[anchor=north west] {$\Gamma_{j_3}$};
\draw (4.63,-4.18) node[anchor=north west] {$\Gamma_{j_4}$};
\draw (14.4,0.1) node[anchor=north west] {$\QQ_{j_3}$};
\draw (16.02,-0.08) node[anchor=north west] {$i$};
\draw (17.82,-1.74) node[anchor=north west] {$\TT_i$};
\draw (13.21,-3.03) node[anchor=north west] {$\mathbb{\wp}(i,j_3)$};
\draw [->,dash pattern=on 1pt off 1pt] (28.02,-5.62) -- (28.66,-5.29);
\draw (27.13,-5.78) node[anchor=north west] {$\ell(j)$};
\draw (28.99,-4.46) node[anchor=north west] {$r(j)$};
\draw (28.36,-5.41) node[anchor=north west] {$\vec{n_j}$};
\draw (30.3,-7.62) node[anchor=north west] {$\Gamma_j$};
\begin{scriptsize}
\fill [color=uuuuuu] (4,-4) circle (1.0pt);
\fill [color=uuuuuu] (8,-4) circle (1.0pt);
\fill [color=uuuuuu] (4,-2) circle (1.0pt);
\fill [color=uuuuuu] (6,1) circle (1.0pt);
\fill [color=uuuuuu] (16,-4) circle (1.0pt);
\fill [color=uuuuuu] (20,-4) circle (1.0pt);
\fill [color=uuuuuu] (16,-2) circle (1.0pt);
\fill [color=uuuuuu] (18,1) circle (1.0pt);
\fill [color=uuuuuu] (12.56,-0.2) circle (1.0pt);
\draw [color=ffqqqq] (17.68,-1.7) circle (1.5pt);
\draw [color=ffqqqq] (15.16,-1.08) circle (1.5pt);
\draw [color=ffqqqq] (13.78,-2.9) circle (1.5pt);
\fill [color=uuuuuu] (26,-8) circle (1.0pt);
\fill [color=uuuuuu] (30,-8) circle (1.0pt);
\fill [color=uuuuuu] (26,-6) circle (1.0pt);
\fill [color=uuuuuu] (28,-3) circle (1.0pt);
\fill [color=qqqqff] (30,-4) circle (1.5pt);
\draw[color=qqqqff] (0.19,5.94) node {$L$};
\draw [color=ffqqqq] (28.46,-5.11) circle (1.5pt);
\draw [color=ffqqqq] (27.23,-5.8) circle (1.5pt);
\end{scriptsize}
\end{tikzpicture}
    \caption{An example of a tetrahedral element of the primary mesh with $S_i=\{j_1, j_2, j_3, j_4\}$ (left) a non-standard dual face-based hexahedral element associated to the face $j_3$ (right).}
    \label{fig.MESH_1}
		\end{center}
\end{figure}
An example of the resulting main and dual grid in three space dimensions is reported in Figure \ref{fig.MESH_1}. The main grid consists of tetrahedral simplex elements, and the face-based dual
elements contain the three vertices of the common triangular face of two tetrahedra (a left and a right one), and the two barycenters of the two tetrahedra that share the same face. 
In three space dimensions the dual grid therefore consists of non-standard five-point hexahedral elements. The same face-based staggered dual mesh has also been used in 
\cite{Bermudez2014,Bermudez2014,Busto2018,USFORCE,USFORCE2}.  


\paragraph{Space-time extension}
In the time direction we cover the time interval $[0,T]$ with a sequence of times $0=t^0<t^1<t^2 \ldots <t^N<t^{N+1}=T$. We denote the time step by $\Delta t^{n+1} = t^{n+1}-t^{n} $ and 
the corresponding time interval by $T^{n+1}=[t^{n}, t^{n+1}]$ for $n=0 \ldots N$. In order to ease notation, sometimes we will use the abbreviation $\Delta t= \Delta t^{n+1}$. 
The generic space-time element defined in the time interval $[t^n, t^{n+1}]$ is given by $\TT_i^\st = \TT_i \times T^{n+1}$ for the main grid, and $\QQ_j^\st=\QQ_j \times T^{n+1}$ for the dual grid.

\paragraph{Space-time basis functions}
\label{sec222}
According to \cite{2DSIUSW,2STINS,3DSIINS} we proceed as follows: in the two dimensional case, we first construct the polynomial basis up to a generic polynomial degree $p$ on some triangular and  quadrilateral reference elements. In particular, we take $T_{std}=\{(\xi,\eta) \in \R^{2} \,\, | \,\,  0 \leq \xi \leq 1, \,\,  0 \leq \eta \leq 1-\xi \}$ as the reference triangle. Using the standard nodal approach of conforming continuous finite elements, we obtain $N_\phi=\frac{(p+1)(p+2)}{2}$ basis functions $\{\phi_k \}_{k \in [1,N_\phi]}$ on $T_{std}$ and $N_{\psi}=(p+1)^2$ nodal basis 
functions on the unit square $R_{std}=[0,1]^2$ that can be obtained using the tensor product of one dimensional basis functions defined of the unit interval $[0,1]$. 
The connection between the reference coordinates $\boldsymbol{\xi}=(\xi,\eta)$ and the physical coordinates $\xx=(x,y)$ is obtained using either sub-parametric or iso-parametric maps, see e.g.  
\cite{2DSIUSW} for more details. 

Regarding the basis functions in three space dimensions, we use the unit tetrahedron $T_{std}=\{(\xi,\eta,\zeta) \in \R^{3} \,\, | \,\,  0 \leq \xi \leq 1, \,\,  0 \leq \eta \leq 1-\xi, \,\, 0 \leq \zeta \leq 1-\xi-\eta \}$ to construct the basis polynomials for the main grid. We use again the standard nodal basis functions of conforming finite elements based on the reference element $T_{std}$ and then using either a sub-parametric or an iso-parametric map to connect the reference space $\boldsymbol{\xi}=(\xi,\eta,\zeta)$ to the physical space $\xx=(x,y,z)$ and vice-versa.   
For the non-standard five-point hexahedral elements of the dual mesh, we define the polynomial basis directly in the physical space via the rescaled monomials of a Taylor series, as defined in 
\cite{3DSIINS}. We thus obtain $N_\phi=N_\psi=\frac{(p+1)(p+2)(p+3)}{6}$ basis functions per element for both, the main grid and the dual mesh. 

Finally, we construct the time basis functions on a reference interval $I_{std}=[0,1]$ for polynomials of degree $p_\gamma$ by taken the Lagrange interpolation polynomials passing through the Gauss-Legendre quadrature points for the unit interval $I_{std}$. In this case the resulting $N_\gamma=p_\gamma+1$ basis functions in time are called $\{\gamma_k\}_{k \in [1, N_\gamma]}$. In this manner, the 
nodal basis in time is an orthogonal basis. 
For every time  interval $[t^n, t^{n+1}]$, the map between the reference interval and the physical one is simply given by $t=t^n+\tau \Delta t^{n+1} \,\, \forall \tau \in [0,1]$. 
Using the tensor product we can finally construct the basis functions on the space-time elements $\TT_i^\st$ and $\QQ_j^\st$ as $\tilde{\phi}(\boldsymbol{\xi},\tau)=\phi(\boldsymbol{\xi}) \cdot \gamma(\tau)$ and $\tilde{\psi}(\boldsymbol{\xi},\tau)=\psi\boldsymbol{\xi}) \cdot \gamma(\tau)$. The total number of basis functions becomes $N_\phi^\st=N_\phi \cdot N_\gamma$ and $N_\psi^\st=N_\psi \cdot N_\gamma$. 

\section{Numerical scheme}
\label{sec.scheme} 
The discrete velocity field $\vec{v}_h$ is now defined on the main grid, while the discrete stress tensor $\bm{\sigma}_h$ is defined on the face-based staggered dual grid, namely $\vec{v}_i(\xx,t)=\vec{v}_h(\xx,t)|_{\TT_i^\st}$ and $\bm{\sigma}_j(\xx,t)=\bm{\sigma}_h(\xx,t)|_{\QQ_j^\st}$. For a heterogeneous material also the material parameters $\lambda$, $\mu$ and $\rho$ have to be discretized using piecewise high order polynomials. The discrete material density $\rho_h$ is defined on the main grid, while the discrete material tensor $\E_h$ is defined on the dual grid, namely 
$\rho_i(\xx)=\rho_h(\xx)|_{\TT_i^\st}$ and $\E_j(\xx)=\E_h(\xx)|_{\QQ_j^\st}$. The numerical solution of \eref{eq:1.1}-\eref{eq:1.2}, as well as the  discrete material parameters are represented inside the space-time control volumes of the main and the dual grid and for a time slice $T^{n+1}$ by piecewise space-time polynomials as follows: 
\begin{eqnarray}
	\vec{v}_i(\xx,t)      & = & \sum\limits_{l=1}^{N_\phi^\st} \tilde{\phi}_l^{(i)}(\xx,t) \hat{\vec{v}}_{l,i}^{n+1}=:\tilde{\bphi}^{(i)}(\xx,t)\hat{\mathbf{\vec{v}}}_i^{n+1}, \nonumber \\
	\rho_i(\xx,t)      & = & \sum\limits_{l=1}^{N_\phi^\st} \tilde{\phi}_l^{(i)}(\xx,t) \hat{\vec{v}}_{l,i}^{n+1}=:\tilde{\bphi}^{(i)}(\xx,t)\hat{\mathbf{\rho}}_i^{n+1}, \nonumber \\
		\bm{\sigma}_j(\xx,t)      & = & \sum\limits_{l=1}^{N_\psi^\st} \tilde{\psi}_l^{(j)}(\xx,t) \hat{\sigma}_{l,j}^{n+1}=:\tilde{\bpsi}^{(j)}(\xx,t)\hat{\bm{\sigma}}_j^{n+1}, \nonumber \\
			\E_j(\xx)       & = & \sum\limits_{l=1}^{N_\psi^\st} \tilde{\psi}_l^{(j)}(\xx) \hat{\E}_{l,j}=:\tilde{\bpsi}^{(j)}(\xx)\hat{\E}_j.  
\label{eq:NM1}
\end{eqnarray}
Note that the discrete velocity is allowed to jump at the element boundaries of the main grid, while the discrete stress tensor jumps only at the boundaries of the dual grid and is therefore 
\textit{continuous} across the boundaries of the main grid. This property is essential for our staggered DG method, since it completely avoids the necessity of Riemann solvers or numerical flux functions
at the element boundaries.

Multiplication of the momentum equation \eref{eq:1.2} by a test function $\tilde{\phi}_k$, for $k=1 \ldots N_\phi^\st$, and integration over a primary space-time control volume $\TT_\st$, leads to
\begin{eqnarray}
\int\limits_{\TT_i^\st}{\tilde{\phi}_k^{(i)} \diff{\rho\vec{v}}{t}\dxt}-\int\limits_{\TT_i^\st}{\tilde{\phi}_k^{(i)} \nabla \cdot \bm{\sigma}\dxt} 
=\int\limits_{\TT_i^\st}{\tilde{\phi}_k^{(i)} \rho \bm{S}_v \dxt}. 
\label{eq:NM6}
\end{eqnarray}
Using integration by parts Eqn. \eref{eq:NM6} yields 
\begin{eqnarray}
\int\limits_{\TT_i^\st}{\tilde{\phi}_k^{(i)} \diff{\rho\vec{v}}{t}\dxt} 
-\left( \, \int\limits_{\partial \TT_i^\st}{\tilde{\phi}_k^{(i)} \bm{\sigma} \cdot \vec{n}_{i} dS\, dt} -\int\limits_{\TT_i^\st}{\nabla\tilde{\phi}_k^{(i)} \cdot \bm{\sigma} \dxt}  \right)
=\int\limits_{\TT_i^\st}{\tilde{\phi}_k^{(i)}\rho \bm{S}_v \dxt},
\label{eq:NM7}
\end{eqnarray}
where $\vec{n}_{i}$ indicates the outward unit normal vector with respect to $\TT_i$. Multiplication of equation \eref{eq:1.1} by a test function $\tilde{\psi}_k$, for $k=1 \ldots N_\psi^\st$ and integration over a dual space-time control volume $\QQ_j^\st$ leads to
\begin{eqnarray}
	\int\limits_{\QQ_j^\st}{\tilde{\psi}_k^{(j)} \diff{\bm{\sigma}}{t} \dxt} -  {\int\limits_{\QQ_j^\st}{\tilde{\psi}_k^{(j)} \E \cdot  \nabla \vec{v} \dxt}} 
	=\int\limits_{\QQ_j^\st}{\tilde{\psi}_k^{(j)} \bm{S}_\sigma \dxt}. 
\label{eq:NM8}
\end{eqnarray}
Due to the discontinuous discretization of our numerical quantities we have to split equations \eref{eq:NM7} and \eref{eq:NM8} as follows:
\begin{eqnarray}
\int\limits_{\TT_i^\st}{\tilde{\phi}_k^{(i)} \diff{(\rho \vec{v})_i}{t}\dxt} 
-\sum\limits_{j \in S_i}\left(\int\limits_{\Gamma_j^\st}{\tilde{\phi}_k^{(i)} \bm{\sigma}_j \cdot \vec{n}_{i,j} dS\, dt} -\int\limits_{\TT_{i,j}^\st}{\nabla\tilde{\phi}_k^{(i)} \cdot \bm{\sigma}_j \dxt}  \right)
=\int\limits_{\TT_i^\st}{\tilde{\phi}_k^{(i)} \rho \bm{S}_v \, \dxt},
\label{eq:NM10}
\end{eqnarray}
\begin{equation}
	\int\limits_{\QQ_j^\st}{\tilde{\psi}_k^{(j)} \diff{\bm{\sigma}_j}{t} \dxt} - 
	  \!\! \int\limits_{\TT_{\ell(j),j}^\st} \!\!\! {\tilde{\psi}_k^{(j)} \E_j \cdot \nabla \vec{v}_{\ell(j)} \dxt}  
	- \!\! \int\limits_{\TT_{r(j),j}^\st}    \!\!\! {\tilde{\psi}_k^{(j)} \E_j \cdot \nabla \vec{v}_{r(j)} \dxt}  
											  - \int\limits_{\Gamma_{j}^\st}{\tilde{\psi}_k^{(j)} \E_j \cdot (\vec{v}_{r(j)}-\vec{v}_{\ell(j)}) \otimes \vec{n}_j dS\, dt} 
	 = \! \int\limits_{\QQ_j^\st}{\tilde{\psi}_k^{(j)} \bm{S}_\sigma \, \dxt}. 
\label{eq:NM11}
\end{equation}
With $\vec{n}_{i,j}$ we denote the outward pointing unit normal vector of element $\TT_i^\st$ on its face $\Gamma_j^\st$. 
Note that a jump contribution is necessary in Eq. \eref{eq:NM11}, since the gradient of the velocity needs to be integrated in the sense of distributions. 
However, since the stress tensor $\bm{\sigma}_j$ is defined on the staggered dual mesh and therefore is continuous across primary element interfaces, no Riemann solver 
(numerical flux function) is needed in our approach, which is a particular feature of the chosen staggered mesh. 
Following the ideas used in \cite{3DSIINS,2STINS} we integrate the terms including the time derivatives in \eref{eq:NM10}-\eref{eq:NM11} by parts in time and hence obtain  
\begin{eqnarray}
\int\limits_{\TT_i^\st}{\tilde{\phi}_k^{(i)} \diff{(\rho\vec{{v}})_i}{t}\dxt}&=&	  \int\limits_{\TT_i}{\tilde{\phi}_k^{(i)}(\xx,t^{n+1,-}) \rho\vec{{v}}_i(\xx,t^{n+1,-}) \dx}  
																																					- \int\limits_{\TT_i}{\tilde{\phi}_k^{(i)}(\xx,t^{n,+}) \rho\vec{{v}}_i(\xx,t^{n,-}) \dx} 
																																					- \int\limits_{\TT_i^\st}{\diff{\tilde{\phi}_k^{(i)}}{t} (\rho\vec{{v}})_i\dxt}, \nonumber \\
\int\limits_{\QQ_j^\st}{\tilde{\psi}_k^{(j)} \diff{\bm{\sigma}_j}{t}\dxt}&=&	 \int\limits_{\QQ_j}{\tilde{\psi}_k^{(j)}(\xx,t^{n+1,-}) \bm{\sigma}_j(\xx,t^{n+1,-}) \dx} 
                                                                             - \int\limits_{\QQ_j}{\tilde{\psi}_k^{(j)}(\xx,t^{n,+}) \bm{\sigma}_j(\xx,t^{n,-}) \dx}  
																																			- \int\limits_{\QQ_j^\st}{\diff{\tilde{\psi}_k^{(j)}}{t} \sigma_j\dxt},																															
\label{eq:NM13}
\end{eqnarray}
where $t^{n,-}$ indicates the boundary-extrapolated value from a lower time slice and thus corresponds to upwinding in time, due to the causality principle. 
Using the definitions \eref{eq:NM1} and rewriting the contribution of the time derivative as specified in \eref{eq:NM13} we obtain from the previous equations 
\begin{eqnarray}
\left( \, \int\limits_{\TT_i}{\tilde{\phi}_k^{(i)}(\xx,t^{n+1,-}) \tilde{\phi}_m^{(i)}(\xx,t^{n+1,-})\dx} \right. 
\left.- \int\limits_{\TT_i^\st}{\diff{\tilde{\phi}_k^{(i)}}{t} \tilde{\phi}_m^{(i)} \dxt} \right) \, \hat{(\rho\vec{v})}_{m,i}^{n+1} 
-\int\limits_{\TT_i}{\tilde{\phi}_k^{(i)}(\xx,t^{n,+}) \tilde{\phi}_m^{(i)}(\xx,t^{n,-})\dx} \, \hat{(\rho\vec{v})}_{m,i}^{n} \nonumber \\
-\sum\limits_{j \in S_i}\left(\int\limits_{\Gamma_j^\st}{\tilde{\phi}_k^{(i)} \tilde{\psi}_m^{(j)} \vec{n}_{i,j} dS\, dt} -\int\limits_{\TT_{i,j}^\st}{\nabla\tilde{\phi}_k^{(i)}  \tilde{\psi}_m^{(j)} \dxt}  \right) \cdot \hat{\bm{\sigma}}_{m,j}^{n+1}  
=\int\limits_{\TT_i^\st} {\tilde{\phi}_k^{(i)} \rho \bm{S}_v } \, \dxt \nonumber \\
\label{eq:NM14}
\end{eqnarray}
and
\begin{eqnarray}
&& \left( \int\limits_{\QQ_j}{\tilde{\psi}_k^{(i)}(\xx,t^{n+1,-}) \tilde{\psi}_m^{(i)}(\xx,t^{n+1,-})\dx} \right. 
\left.- \int\limits_{\QQ_j^\st}{\diff{\tilde{\psi}_k^{(i)}}{t} \tilde{\psi}_m^{(i)} \dxt} \right) \hat{\bm{\sigma}}_{m,j}^{n+1} 
-\int\limits_{\QQ_i}{\tilde{\psi}_k^{(i)}(\xx,t^{n,+}) \tilde{\psi}_m^{(i)}(\xx,t^{n,-})\dx} \, \hat{\bm{\sigma}}_{m,j}^{n}  \nonumber \\
&&	- \hat{\E}_{q,j}\cdot \left( \int\limits_{\TT_{\ell(j),j}^\st}{\tilde{\psi}_k^{(j)} \nabla \tilde{\phi}_m^{(\ell(j))} \tilde{\psi}_q^{(j)} \dxt}  
	 -   \int\limits_{\Gamma_{j}^\st}{\tilde{\psi}_k^{(j)} \tilde{\phi}_m^{(\ell(j))} \tilde{\psi}_q^{(j)}\vec{n}_j dS dt} \right) \hat{\vec{v}}_{m,\ell(j)}^{n+1}  \nonumber \\
&&	- \hat{\E}_{q,j}\cdot \left( \int\limits_{\TT_{r(j),j}^\st}{\tilde{\psi}_k^{(j)}\nabla \tilde{\phi}_m^{(r(j))} \tilde{\psi}_q^{(j)}  \dxt}  
	  +  \int\limits_{\Gamma_{j}^\st}{\tilde{\psi}_k^{(j)} \tilde{\phi}_m^{(r(j))} \tilde{\psi}_q^{(j)}\vec{n}_j dS dt} \right) \hat{\vec{v}}_{m,r(j)}^{n+1} 
	=\int\limits_{\QQ_j^\st}{\tilde{\psi}_k^{(j)} \bm{S}_{\sigma} \dxt}.   
\label{eq:NM15}
\end{eqnarray}
where the quantity $\hat{(\rho\vec{v})}_{m,i}^{n+1}$ is simply defined using a pointwise evaluation, namely $\hat{(\rho\vec{v})}_{m,i}^{n+1} = \hat{\rho}_{m,i}^{n+1} \hat{\vec{v}}_{m,i}^{n+1}$ (here, no
summation over repeated indices is used). 
In order to ease the notation we introduce the following matrix and tensor definitions, according to \cite{3DSIINS,2STINS}: 
\begin{equation}
	  \M_j^+       = \int\limits_{\QQ_j}{\tilde{\psi}_k^{(j)}(\xx,t^{n+1,-})\tilde{\psi}_m^{(j)}(\xx,t^{n+1,-})  \, d\xx }, \qquad 
		\bar{\M}_i^+ = \int\limits_{\TT_i}{\tilde{\phi}_k^{(i)}(\xx,t^{n+1,-})\tilde{\phi}_m^{(i)}(\xx,t^{n+1,-})  \, d\xx }, \label{eq:MD_2} 
\end{equation} 		
\begin{equation}
    \M_j^-       = \int\limits_{\QQ_j}{\tilde{\psi}_k^{(j)}(\xx,t^{n,+})\tilde{\psi}_m^{(j)}(\xx,t^{n,-})  \, d\xx }, \qquad
		\bar{\M}_i^- = \int\limits_{\TT_i}{\tilde{\phi}_k^{(i)}(\xx,t^{n,+})\tilde{\phi}_m^{(i)}(\xx,t^{n,-})  \, d\xx }, \label{eq:MD_2_1} \\
\end{equation} 		
\begin{equation}
    \M_j^\circ       = \int\limits_{\QQ_j^\st}{\diff{\tilde{\psi}_k^{(j)}}{t} \tilde{\psi}_m^{(j)} \, d\xx  dt}, \qquad
    \bar{\M}_i^\circ = \int\limits_{\TT_i^\st}{\diff{\tilde{\phi}_k^{(i)}}{t} \tilde{\phi}_m^{(i)} \, d\xx  dt},  \label{eq:MD_2_2} \\
\end{equation} 		
\begin{equation}
		\M_j = \M_j^+ -\M_j^\circ, \qquad \bar{\M}_i = \bar{\M}_i^+ - \bar{\M}_i^\circ, \label{eq:MD_2_3}  
\end{equation} 		

\begin{equation}
	     \Ss_j = \int\limits_{\QQ_j^\st}{\tilde{\psi}_k^{(j)} \mathbf{S}_{\sigma} \, d\xx  dt}, \qquad 
	(\rho \Ss)_i = \int\limits_{\TT_i^\st}{\tilde{\phi}_k^{(i)} \rho \mathbf{S}_v \, d\xx  dt}
\label{eq:MD_5_2}
\end{equation}

\begin{equation}
	\D_{i,j}=\int\limits_{\Gamma_j^\st}{\tilde{\phi}_k^{(i)}\tilde{\psi}_m^{(j)}\nextud dS dt}-\int\limits_{\TT_{i,j}^\st}{\nabla \tilde{\phi}_k^{(i)}\tilde{\psi}_m^{(j)}\, d\xx  dt},
\label{eq:MD_3}
\end{equation}

\begin{equation}
	\Q_{i,j} = \int\limits_{\TT_{i,j}^\st}{\tilde{\psi}_k^{(j)} \nabla \tilde{\phi}_{m}^{(i)} \tilde{\psi}_q^{(j)}  \, d\xx  dt} -\int\limits_{\Gamma_j^\st}{\tilde{\psi}_k^{(j)} \tilde{\phi}_{m}^{(i)} \tilde{\psi}_q^{(j)}s_{i,j} \nstd dS dt},
\label{eq:MD_6}
\end{equation}
where $s_{i,j}$ is a sign function defined by
\begin{equation}
	s_{i,j}=\frac{r(j)-2i+\ell(j)}{r(j)-\ell(j)}.
\label{eq:SD_1}
\end{equation}
Equations \eref{eq:NM14} and \eref{eq:NM15} are then rewritten in a compact form as
\begin{equation}
\bar{\M}_i \hat{(\rho \vec{{v}})}_i^{n+1}=\bar{\M}_i^- \hat{(\rho \vec{{v}})}_i^{n}+\sum\limits_{j \in S_i}\D_{i,j} \cdot \hat{\bm{\sigma}}^{n+1}_j+(\rho\Ss)_i,
\label{eq:NM25}
\end{equation}
\begin{equation}
\M_j \hat{\bm{\sigma}}_j^{n+1} = \M_j^- \hat{\bm{\sigma}}_j^{n}
+ \hat{\E}_j \cdot \Q_{\ell(j),j} \hat{\vec{{v}}}^{n+1}_{\ell(j)}+ \hat{\E}_j \cdot  \Q_{r(j),j} \hat{\vec{{v}}}^{n+1}_{r(j)} 
+\Ss_j.
\label{eq:NM26}
\end{equation}
Formal substitution of the discrete PDE for the stress tensor \eqref{eq:NM26} into the discrete momentum equation \eref{eq:NM25}, i.e. application of the Schur complement, yields a 
linear system that corresponds to a \textit{discrete second order wave equation} for all degrees of freedom of the velocity vector field $\vec{v}_h$ and which reads 
\begin{equation}
\bar{\M}_i \, \hat{\rho}_i \hat{ \vec{{v}}}_i^{n+1} - \sum\limits_{j \in S_i}\D_{i,j} \cdot \M_j^{-1} 
\left( \hat{\E}_j \cdot \Q_{\ell(j),j}  \hat{\vec{{v}}}^{n+1}_{\ell(j)} + \hat{\E}_j \cdot \Q_{r(j),j}  \hat{\vec{{v}}}^{n+1}_{r(j)} \right)  
= \bar{\M}_i^- \hat{\rho}_i \hat{ \vec{{v}}}_i^{n}+\sum\limits_{j \in S_i}\D_{i,j} \cdot \M_j^{-1} \left( \M_j^- \hat{\bm{\sigma}}^{n}_j + \Ss_j \right) + (\rho\Ss)_i. 
\label{eq:velocity.sys}
\end{equation}
The shape of this system can be rather complex if explicitly expressed in terms of all components of $\mathbf{v}_h$ and $\mathbf{E}_h$. 
For anisotropic materials, the system has exactly the same formal structure as given in \eqref{eq:velocity.sys}, just with a more complex tensor $\hat{\E}_j $ compared to simple isotropic material. 
In any case, the system involves only the velocity  field of the direct neighbors of each element and thus becomes a $4$-point block system in two space dimensions and a $5$-point block system in 
three space dimensions. For the  particular case of $p_\gamma=0$ (piecewise constant polynomials in time, i.e. $\M_j^{\circ}=\bar{\M}_i^{\circ}=0$, $\M_j^+ = \M_j^- = \M_j$, $\bar{\M}_i^+ = \bar{\M}_i^- = \bar{\M}_i$), second order of accuracy in time can be easily achieved with the Crank-Nicolson scheme. In this 
setting, equations \eqref{eq:NM25} and \eqref{eq:NM26} read 
\begin{equation}
\bar{\M}_i \hat{(\rho \vec{{v}})}_i^{n+1}=\bar{\M}_i \hat{(\rho \vec{{v}})}_i^{n}+\sum\limits_{j \in S_i}\D_{i,j} \cdot \hat{\bm{\sigma}}^{n+\halb}_j+(\rho\Ss)_i,
\label{eq:NM25.cn}
\end{equation}
\begin{equation}
\M_j \hat{\bm{\sigma}}_j^{n+1} = \M_j \hat{\bm{\sigma}}_j^{n}
+ \hat{\E}_j \cdot \Q_{\ell(j),j} \hat{\vec{{v}}}^{n+\halb}_{\ell(j)} + \hat{\E}_j \cdot \Q_{r(j),j}  \hat{\vec{{v}}}^{n+\halb}_{r(j)} 
+\Ss_j,
\label{eq:NM26.cn}
\end{equation}
with 
$\hat{\bm{\sigma}}^{n+\halb}_j = \halb \left( \hat{\bm{\sigma}}^{n}_j  + \hat{\bm{\sigma}}^{n+1}_j \right) $ and 
$\hat{\vec{{v}}}^{n+\halb}_i= \halb \left( \hat{\vec{{v}}}^{n}_i + \hat{\vec{{v}}}^{n+1}_i \right)$. 
In this case the final velocity system reads 
\begin{eqnarray}
\bar{\M}_i \, \hat{\rho}_i \hat{ \vec{{v}}}_i^{n+1} &-& \frac{1}{4} \sum\limits_{j \in S_i}\D_{i,j}  \cdot \M_j^{-1} 
\left( \hat{\E}_j \cdot \Q_{i,j}  \hat{\vec{{v}}}^{n+1}_{i} + \hat{\E}_j \cdot \Q_{\p(i,j),j}  \hat{\vec{{v}}}^{n+1}_{\p(i,j)} \right)  
=   \nonumber \\ 
\bar{\M}_i \hat{\rho}_i \hat{ \vec{{v}}}_i^{n} 
+ \sum\limits_{j \in S_i}\D_{i,j} \cdot \left( \hat{\bm{\sigma}}^{n}_j + \halb \M_j^{-1}  \Ss_j \right) 
&+& \frac{1}{4} \sum\limits_{j \in S_i}\D_{i,j} \cdot \M_j^{-1} 
\left( \hat{\E}_j \cdot \Q_{i,j} \hat{\vec{{v}}}^{n}_{i} + \hat{\E}_j \cdot \Q_{\p(i,j),j}  \hat{\vec{{v}}}^{n}_{\p(i,j)} \right) 
 + (\rho\Ss)_i.   
\label{eq:velocity.sys.cn}
\end{eqnarray}
It can be shown to be symmetric and positive definite for homogeneous materials. The proof of those properties is reported in Section \ref{sec_5} for the  homogeneous case. Thanks to those properties we  are able, for this special choice, to use a very fast linear solver such as the conjugate gradient (CG) method. For $p_\gamma>0$ the system is not symmetric anymore and since the time derivatives appear in both  equations the symmetrization strategy adopted in \cite{Fambri2016} for the incompressible Navier-Stokes equations is not possible any more. In any case we can still solve the system using a matrix-free GMRES algorithm \cite{GMRES} in order to obtain the degrees of freedom $\hat{ \vec{{v}}}_i^{n+1}$ 
of the velocity field at the new time slice. Once the new velocity field is known, we can then readily update the stress tensor at the aid of \eqref{eq:NM26} for $p_\gamma > 0$ or via
\eqref{eq:NM26.cn} for $p_\gamma=0$. 
This closes the description of the numerical method, which is analyzed in the subsequent section.

\section{Properties of the staggered space-time DG schemes for linear elasticity}
\label{sec_5}
In this section we report some details about the main matrix for the velocity system that needs to be solved in each time step, as well as some theoretical results about the energy 
stability of the numerical method. 

\subsection{Symmetry and positive definiteness for the special case of a Crank-Nicolson scheme in time} 
For homogeneous material ($\rho=const.$, $\mathbf{E}=const. $) and for $p_\gamma=0$ combined with the Crank-Nicolson scheme in time, the linear system \eqref{eq:velocity.sys.cn} reduces to  
\begin{equation}
\rho \bar{\M}_i \, \hat{ \vec{{v}}}_i^{n+1} - \frac{1}{4} \sum\limits_{j \in S_i}\D_{i,j} \cdot \M_j^{-1} 
\E \cdot \left( \tilde{\Q}_{i,j} \cdot \hat{\vec{{v}}}^{n+1}_{i} + \tilde{\Q}_{\p(i,j),j}  \cdot \hat{\vec{{v}}}^{n+1}_{\p(i,j)} \right) = \mathbf{b}_i^n, 
\label{eqn.A1} 
\end{equation} 
with the known right hand side $\mathbf{b}_i^n$ and the matrix 
\begin{equation}
	\tilde{\Q}_{i,j} = \left( \tilde{Q}_{i,j} \right)_l^{\kappa \mu} = \int\limits_{\TT_{i,j}^\st}{\tilde{\psi}_{\kappa}^{(j)} \partial_l \tilde{\phi}_{\mu}^{(i)}  \, d\xx  dt} -\int\limits_{\Gamma_j^\st}{\tilde{\psi}_\kappa^{(j)} \tilde{\phi}_{\mu}^{(i)} s_{i,j} (n_j)_l dS dt}.
	\label{eqn.qtilde} 
\end{equation} 
Note that the rank 3 tensor ${\Q}_{i,j}$ can be simplified to $\tilde{\Q}_{i,j}$ in the case of constant material properties. In this section, we use Greek upper indices for the basis and test functions in the objects $\tilde{\Q}_{i,j}$ and $\D_{i,j}$, and Latin lower indices for spatial vectors and tensors. The indices $i$ and $j$ are reserved for the numbers of the element and the face.  
\begin{theorem}
In the homogeneous isotropic case and for $p_\gamma=0$, the matrix of system \eref{eqn.A1} is symmetric. 
\end{theorem}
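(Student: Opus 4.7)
The plan is to read off the block structure of the matrix of \eqref{eqn.A1}, to establish a discrete integration-by-parts identity relating $\D_{i,j}$ and $\tilde{\Q}_{i,j}$, and then to combine this identity with the symmetries of $\bar{\M}_i$, $\M_j$ and the major symmetry of $\E$. The matrix of \eqref{eqn.A1} is block-sparse with respect to the element index: the block coupling $i$ to $k$ vanishes unless $k=i$ or $k=\p(i,j)$ for some $j\in S_i$. The diagonal block is
\[
A_{ii} \;=\; \rho\,\bar{\M}_i \;-\; \tfrac14\sum_{j\in S_i}\D_{i,j}\cdot\M_j^{-1}\,\E\cdot\tilde{\Q}_{i,j},
\]
while for the unique face $j\in S_i\cap S_k$ shared by $i$ and its neighbour $k=\p(i,j)$ the off-diagonal block is
\[
A_{ik} \;=\; -\tfrac14\,\D_{i,j}\cdot\M_j^{-1}\,\E\cdot\tilde{\Q}_{k,j}.
\]
Global symmetry therefore reduces to the two identities $A_{ii}^T=A_{ii}$ and $A_{ik}^T=A_{ki}$.

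The crucial first step is the discrete integration-by-parts identity
\[
\D_{i,j} \;=\; -\,\tilde{\Q}_{i,j}^{\,T},
\]
where the transposition acts only on the pair of basis-function indices (primary on $\TT_i$ and dual on $\QQ_j$), leaving the spatial index $l$ untouched. It follows from $\vec{n}_{i,j}=s_{i,j}\vec{n}_j$ and a direct term-by-term comparison of \eqref{eq:MD_3} with \eqref{eqn.qtilde}: the two volume contributions $\pm\int\partial_l\tilde{\phi}^{(i)}\tilde{\psi}^{(j)}$ have opposite signs, and so do the two surface contributions $\pm s_{i,j}\int\tilde{\phi}^{(i)}\tilde{\psi}^{(j)}(n_j)_l$.

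Substituting this identity into $A_{ii}$ and $A_{ik}$ and writing everything in components yields
\[
A_{ii}^{\kappa c_1,\mu c_2} \;=\; \rho\,(\bar M_i)^{\kappa\mu}\,\delta_{c_1 c_2} \;+\; \tfrac14\sum_{j\in S_i}(D_{i,j})_{l_1}^{\kappa\alpha}(M_j^{-1})^{\alpha\beta}E_{l_1 c_1 l_2 c_2}(D_{i,j})_{l_2}^{\mu\beta},
\]
and the analogous expression with the second $D_{i,j}$ replaced by $D_{k,j}$ for $A_{ik}^{\kappa c_1,\mu c_2}$. Relabelling the dummy indices $(l_1,\alpha)\leftrightarrow(l_2,\beta)$ and then invoking (i) the symmetry of $\bar{\M}_i$, (ii) the symmetry of $\M_j^{-1}$, and (iii) the major symmetry $E_{l_1 c_1 l_2 c_2}=E_{l_2 c_2 l_1 c_1}$ of the stiffness tensor converts each entry into the corresponding transposed entry and delivers both claimed identities at once.

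The main obstacle is purely one of bookkeeping: three different kinds of indices have to be kept strictly separate — two basis-function indices (main and dual), one spatial index on which $\nabla$ and $\vec{n}_j$ act, and the pair of velocity/stress vector indices contracted by $\E$ — so that each of the three symmetry properties is invoked on the matching pair. Conceptually the argument is a discrete analogue of the fact that the continuous operator $\vec{v}\mapsto-\nabla\cdot(\E\cdot\nabla\vec{v})$ is formally self-adjoint whenever $\E$ enjoys the major symmetry, and since no distinction has to be drawn between the diagonal and off-diagonal blocks, a single computation settles both cases.
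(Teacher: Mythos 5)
Your proposal is correct and follows essentially the same route as the paper's proof: it splits the matrix into diagonal and off-diagonal blocks, uses the key discrete integration-by-parts identity $\tilde{\Q}_{i,j}=-\D_{i,j}^{\top}$, and closes the argument in index notation via the major symmetry of $\E$ together with the symmetries of $\bar{\M}_i$ and $\M_j^{-1}$. The only (welcome) difference is that you justify the identity $\tilde{\Q}_{i,j}=-\D_{i,j}^{\top}$ explicitly from $\vec{n}_{i,j}=s_{i,j}\vec{n}_j$, whereas the paper simply declares it obvious from the definitions.
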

\begin{proof}
Since the material is assumed to be homogeneous, $\rho$ and $\E$ are constant in space and time. Due to the symmetry of the stress tensor $\sigma_{ij}=\sigma_{ji}$ and the strain tensor 
$\epsilon_{kl}=\epsilon_{lk}$, we also have $E_{ijkl} = E_{jikl} = E_{jilk}$, which are the so-called \emph{minor symmetries} of $\E$. The so-called \emph{major symmetries} of $\E$  
imply also that $\E_{ijkl}=\E_{klij}$. All these symmetries of $\E$ are summarized in the shorthand notation $\E = \E^\top$. Furthermore, from the definitions \eqref{eqn.qtilde} and 
\eqref{eq:MD_3} it is obvious to see that $\tilde{\Q}_{i,j} = - \D_{i,j}^\top$, see also \cite{3DSIINS}. 
From \eqref{eq:MD_2}-\eqref{eq:MD_2_3} one obtains that $ \bar{\M}_i =  \bar{\M}_i^\top$ for $p_\gamma=0$. The diagonal block in \eqref{eqn.A1} then reads 
\begin{equation}
 \mathbf{D}_i = \rho \bar{\M}_i  + \frac{1}{4} \sum\limits_{j \in S_i} \D_{i,j}  \M_j^{-1} \E \cdot \D_{i,j}^\top,   
\end{equation} 
or, more conveniently in index notation (Greek upper indices refer to basis and test functions, Latin lower indices to spatial vectors and tensors) 
\begin{equation} 
\mathbf{D}_i = (D_i)_{kl}^{\mu \nu} = \rho \left( \bar{M}_i \right)^{ \mu \nu } \delta_{kl} + \frac{1}{4} \sum\limits_{j \in S_i} \left( D_{i,j} \right)^{\mu \kappa}_p \left( \M_j^{-1} \right)^{\kappa \alpha} E_{kplm} \left( D_{i,j} \right)^{\nu \alpha}_m. 
\end{equation} 
and it is easy to see that its transpose verifies 
\begin{equation}
 \mathbf{D}_i^\top = \rho \bar{\M}_i^\top  + \frac{1}{4} \sum\limits_{j \in S_i} \D_{i,j}  \M_j^{-\top} \E^\top \cdot \D_{i,j}^\top = \mathbf{D}_i,   
\end{equation} 
or, more conveniently in index notation 
\begin{eqnarray}
\mathbf{D}_i^\top = (D_i)_{lk}^{\nu \mu} &=& \rho \left( \bar{M}_i \right)^{ \nu \mu } \delta_{lk} + \frac{1}{4} \sum\limits_{j \in S_i} \left( D_{i,j} \right)^{\nu \kappa}_p \left( \M_j^{-1} \right)^{\kappa \alpha} E_{lpkm} \left( D_{i,j} \right)^{\mu \alpha}_m = \nonumber \\  
&=& \rho \left( \bar{M}_i \right)^{ \mu \nu} \delta_{kl} + \frac{1}{4} \sum\limits_{j \in S_i}  \left( D_{i,j} \right)^{\mu \alpha}_m \left( \M_j^{-1} \right)^{\alpha \kappa} E_{kmlp} \left( D_{i,j} \right)^{\nu \kappa}_p  = (D_i)_{kl}^{\mu \nu} = \mathbf{D}_i, 
\end{eqnarray} 
where we have used the major symmetry of $E_{ijkl}$, the symmetries of the mass matrix and of the Kronecker delta $\delta_{kl}$ and the simple renaming of contracted indices. 

The off-diagonal blocks involving the neighbor elements $\p(i,j)$ of element $i$ read 
\begin{equation}
\mathbf{N}_{i,\p(i,j)} = - \frac{1}{4} \D_{i,j}  \M_j^{-1} \E \cdot \tilde{\Q}_{\p(i,j),j}=\frac{1}{4} \D_{i,j}  \M_j^{-1} \E \cdot \D_{\p(i,j),j}^\top.  
\end{equation} 
We write now the previous contribution in terms of edges $j \in [1,N_j]$ so that $\mathbf{N}_{\ell(j),r(j)}$ and $\mathbf{N}_{r(j),\ell(j)}$ are the off-diagonal blocks involving the contribution of $r(j)$ to $\ell(j)$ and vice-versa. So we have to show that $\mathbf{N}_{\ell(j),r(j)}=\mathbf{N}_{r(j),\ell(j)}^\top$, but
\begin{equation}
	\mathbf{N}_{r(j),\ell(j)}^\top= \frac{1}{4} \left(\D_{r(j),j}  \M_j^{-1} \E \cdot \D_{\ell(j),j}^\top \right)^\top = \frac{1}{4} \D_{\ell(j),j} \M_j^{-\top} \E^\top \D_{r(j),j}^\top=\mathbf{N}_{\ell(j),r(j)},
\end{equation}
or, using again the index notation,
\begin{eqnarray}
	\mathbf{N}_{r(j),\ell(j)}^\top=\left( N_{r(j),\ell(j)} \right)_{lk}^{\nu \mu}
	&=&  \frac{1}{4} \left( D_{r(j),j} \right)^{\nu \kappa}_p \left( \M_j^{-1} \right)^{\kappa \alpha} E_{lpkm} \left( D_{\ell(j),j} \right)^{\mu \alpha}_m = \nonumber \\
	&=& \frac{1}{4} \left( D_{\ell(j),j} \right)^{\mu \alpha}_m \left( \M_j^{-1} \right)^{\alpha \kappa} E_{kmlp} \left( D_{r(j),j} \right)^{\nu \kappa}_p  = \left(N_{\ell(j),r(j)}\right)_{kl}^{\mu \nu} = \mathbf{N}_{\ell(j),r(j)}
\end{eqnarray}
from the symmetries of $\E$ and $\M_j$.    
\end{proof}
\begin{theorem}
In the homogeneous case and $p_\gamma=0$, the matrix of system \eref{eqn.A1}  is positive definite.
\end{theorem}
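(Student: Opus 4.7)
My plan is to show positive definiteness by computing the quadratic form $\hat{\mathbf{v}}^\top \mathbf{A} \hat{\mathbf{v}}$ globally, and splitting it into a strictly positive mass contribution and a non-negative discrete elastic-energy contribution. The key algebraic ingredients are those already established in the previous theorem: (i) $\bar{\M}_i$ is symmetric positive definite (it is a mass matrix on a positive-measure element); (ii) $\M_j$ is symmetric positive definite on the dual cell $\QQ_j$, hence so is $\M_j^{-1}$; (iii) $\tilde{\Q}_{i,j}=-\D_{i,j}^\top$, which provides the discrete duality between gradient and divergence; (iv) the stiffness tensor $\E$ is symmetric in the strong sense $E_{ijkl}=E_{klij}=E_{jikl}=E_{ijlk}$ and, for a physically admissible elastic material, it satisfies the ellipticity bound $\epsilon_{ij}E_{ijkl}\epsilon_{kl}\ge 0$ for every symmetric rank-two tensor $\epsilon$ (strict positivity for $\epsilon\neq 0$).

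First I would assemble the quadratic form cell-by-cell. The diagonal block $\mathbf{D}_i$ contributes $\rho\,\hat{\vec v}_i^\top \bar{\M}_i\,\hat{\vec v}_i$ plus $\tfrac14\sum_{j\in S_i}\hat{\vec v}_i^\top \D_{i,j}\M_j^{-1}\E\D_{i,j}^\top\hat{\vec v}_i$, and the off-diagonal block $\mathbf{N}_{\ell(j),r(j)}$ couples each pair of neighbors across a shared face $j$. Reorganising the sum over cells into a sum over faces, the stiffness-like contribution for the face $j$ is
\begin{equation*}
\tfrac14\Bigl(\D_{\ell(j),j}^\top\hat{\vec v}_{\ell(j)}+\D_{r(j),j}^\top\hat{\vec v}_{r(j)}\Bigr)^{\!\top}\!\M_j^{-1}\E\Bigl(\D_{\ell(j),j}^\top\hat{\vec v}_{\ell(j)}+\D_{r(j),j}^\top\hat{\vec v}_{r(j)}\Bigr).
\end{equation*}
Denoting $\mathbf w_j := \D_{\ell(j),j}^\top\hat{\vec v}_{\ell(j)}+\D_{r(j),j}^\top\hat{\vec v}_{r(j)} = -\tilde\Q_{\ell(j),j}\hat{\vec v}_{\ell(j)}-\tilde\Q_{r(j),j}\hat{\vec v}_{r(j)}$, this expression is $\tfrac14\mathbf w_j^\top\M_j^{-1}\E\mathbf w_j$ and by the definition of $\tilde\Q$ the object $\mathbf w_j$ represents a discrete velocity gradient on the dual cell $\QQ_j$, indexed by a stress basis index $\kappa$ and two spatial indices $c,l$.

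The decisive step is to show $\mathbf w_j^\top\M_j^{-1}\E\mathbf w_j\ge 0$. Since $\M_j^{-1}$ acts only on the basis index $\kappa$ while $\E$ acts only on the spatial indices, the two factors commute and the quadratic form can be written in index notation as $(\M_j^{-1})^{\kappa\alpha}\,w_j^{\kappa,cl}E_{clkd}\,w_j^{\alpha,kd}$. The minor symmetries of $\E$ make this form depend only on the symmetrised gradient $\epsilon_j^{\kappa,cl}:=\tfrac12(w_j^{\kappa,cl}+w_j^{\kappa,lc})$, and ellipticity of $\E$ gives $\epsilon_j^{\kappa,cl}E_{clkd}\epsilon_j^{\alpha,kd}\ge 0$ pointwise in $(\kappa,\alpha)$. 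Combined with the positive definiteness of $\M_j^{-1}$, this yields a sum of non-negative contributions from every face.

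Finally, collecting the pieces,
\begin{equation*}
\hat{\mathbf v}^\top\mathbf A\hat{\mathbf v} \;=\; \rho\sum_i \hat{\vec v}_i^\top\bar{\M}_i\hat{\vec v}_i \;+\; \tfrac14\sum_j \mathbf w_j^\top\M_j^{-1}\E\,\mathbf w_j \;\ge\; \rho\sum_i \hat{\vec v}_i^\top\bar{\M}_i\hat{\vec v}_i,
\end{equation*}
which is strictly positive whenever $\hat{\mathbf v}\neq 0$ because each $\bar{\M}_i$ is SPD. Hence $\mathbf A$ is positive definite irrespective of mesh quality and time-step size. The main technical obstacle is the correct bookkeeping in the third paragraph: one must be careful that $\M_j^{-1}$ and $\E$ act on disjoint index sets so that the product is truly SPSD, and that the minor symmetries of $\E$ are invoked to reinterpret the discrete gradient as a discrete strain; once this is done, everything else reduces to the standard positivity of the mass matrix.
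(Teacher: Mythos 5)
Your overall strategy coincides with the paper's: split the quadratic form into the mass contribution $\rho\sum_i \hat{\vec{v}}_i^\top \bar{\M}_i \hat{\vec{v}}_i$, which is strictly positive for $\hat{\mathbf{v}}\neq 0$, plus a stiffness contribution that is shown to be positive semi-definite. The paper simply delegates the semi-definiteness of the stiffness part to an earlier reference on staggered semi-implicit DG schemes, whereas you make it self-contained by regrouping the element sums into a sum over dual faces and exhibiting each face contribution as $\tfrac14\, \mathbf{w}_j^\top \M_j^{-1}\E\,\mathbf{w}_j$ with $\mathbf{w}_j = \D_{\ell(j),j}^\top\hat{\vec{v}}_{\ell(j)}+\D_{r(j),j}^\top\hat{\vec{v}}_{r(j)}$; this is exactly the sum-of-squares structure underlying the cited result, so the route is the same, just written out in full (and the face-wise regrouping does rely on the symmetry $\mathbf{N}_{\ell(j),r(j)}=\mathbf{N}_{r(j),\ell(j)}^\top$ from the preceding theorem, which you implicitly use).

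One step is stated imprecisely. The inequality $\epsilon_j^{\kappa,cl}E_{clkd}\epsilon_j^{\alpha,kd}\ge 0$ does \emph{not} hold ``pointwise in $(\kappa,\alpha)$'': for $\kappa\neq\alpha$ this is a bilinear form evaluated at two different strains and can be negative, and likewise individual entries $(\M_j^{-1})^{\kappa\alpha}$ can be negative, so one cannot conclude term-by-term non-negativity of the double sum. What is true is that $B^{\kappa\alpha}:=\epsilon_j^{\kappa,cl}E_{clkd}\epsilon_j^{\alpha,kd}$ is a Gram matrix with respect to the positive semi-definite form $\E$ restricted to symmetric tensors, hence $B\geq 0$ as a symmetric matrix in $(\kappa,\alpha)$, and the face contribution equals the Frobenius inner product $\mathrm{tr}\left(\M_j^{-1}B\right)$ of two symmetric positive semi-definite matrices, which is non-negative; equivalently, diagonalize $\M_j^{-1}=\sum_r\lambda_r u_r u_r^\top$ with $\lambda_r>0$ and observe that the form becomes $\sum_r \lambda_r\, W^r_{cl}E_{clkd}W^r_{kd}\ge 0$ with $W^r_{cl}=u_r^\kappa\epsilon_j^{\kappa,cl}$. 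With this repair the argument is complete; the rest of your bookkeeping (the identity $\tilde{\Q}_{i,j}=-\D_{i,j}^\top$, the use of the minor symmetries to pass from the discrete gradient to the discrete strain, and the strict positivity supplied by the mass matrices $\bar{\M}_i$) is correct and matches the ingredients the paper invokes.
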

\begin{proof}
We can follow the same reasoning as in \cite{SINS}, since $\bar{\M}_i = \bar{\M}_i^\top > 0$ and $\E=\E^\top > 0$. With these properties and from the results of \cite{SINS} we obtain  that the system  matrix of \eqref{eqn.A1} without the term $\rho \M_i$ is at least positive semi-definite. If we add the contribution of the positive definite mass matrix $\rho \bar{\M}_i > 0$, then the resulting 
system matrix in \eqref{eqn.A1} is positive definite. 
\end{proof}
Numerical evidence shows that also the non-homogeneous case seems to have the same properties, but unfortunately a rigorous mathematical proof is still missing for the general non-homogeneous case. 

\subsection{Stability analysis}
In this section we prove some stability results for the proposed scheme in the energy norm. 
In particular we will demonstrate that the semi-discrete scheme is \textit{energy preserving} and that the fully discrete staggered space-time DG scheme is \textit{energy stable}. 
A particular case is given by $p_\gamma=0$ combined with the Crank-Nicolson time discretization, for which the fully discrete scheme is \textit{exactly energy preserving}. 
\begin{theorem}
\label{thm_P1}
For homogeneous material with $\rho>0$, $\E=\E^\top > 0$ and in the absence of volume source terms the semi-discrete form of the proposed staggered DG scheme is energy preserving.  
\end{theorem}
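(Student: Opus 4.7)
The plan is to reproduce at the discrete level the classical continuous energy identity for linear elastodynamics. The natural discrete energy is
$$E_h(t) = \halb \sum_{i=1}^{\Ni} \int_{\TT_i} \rho \, |\vec{v}_i|^2 \, \dx \; + \; \halb \sum_{j=1}^{\Nj} \int_{\QQ_j} \bm{\sigma}_j : \E^{-1} : \bm{\sigma}_j \, \dx,$$
which is well defined and nonnegative under $\rho > 0$ and $\E = \E^\top > 0$, since $\E^{-1}$ inherits the symmetry and positive-definiteness of $\E$. The target is $dE_h/dt = 0$ along semi-discrete solutions.

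I would start from the semi-discrete analogues of \eqref{eq:NM10} and \eqref{eq:NM11}, obtained by keeping time continuous and setting the source terms to zero. At each fixed $t$, I would then make the following admissible choices of test functions: in the momentum equation on $\TT_i$ I take $\tilde{\phi}_k^{(i)}(\xx) \leftarrow \vec{v}_i(\xx,t)$, and in the stress equation on $\QQ_j$ I take $\tilde{\psi}_k^{(j)}(\xx) \leftarrow (\E^{-1} : \bm{\sigma}_j)(\xx,t)$. Both are legitimate because $\vec{v}_i$ and $\bm{\sigma}_j$ lie component-wise in the polynomial spaces spanned by $\tilde{\bphi}^{(i)}$ and $\tilde{\bpsi}^{(j)}$, and $\E^{-1}$ is a well-defined constant rank-$4$ tensor by the invertibility of $\tilde{\E}$. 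Using the constancy of $\rho,\E$ together with the major and minor symmetries of $\E$ (shared by $\E^{-1}$), the two time-derivative terms become exactly $\tfrac{d}{dt}\halb\int_{\TT_i}\rho|\vec{v}_i|^2\dx$ and $\tfrac{d}{dt}\halb\int_{\QQ_j}\bm{\sigma}_j:\E^{-1}:\bm{\sigma}_j\dx$. Moreover, the identity $(\E^{-1}:\bm{\sigma}_j):\E:\nabla\vec{v} = \bm{\sigma}_j:\nabla\vec{v}$, which is a direct consequence of the minor symmetries via $E^{-1}_{ijab}E_{ijkl} = \delta_{abkl}$ combined with $\delta_{abkl}\sigma_{ab}=\sigma_{kl}$, reduces the volume and jump terms on the stress side to $\bm{\sigma}_j:\nabla\vec{v}$ and $\bm{\sigma}_j:(\vec{v}_{r(j)}-\vec{v}_{\ell(j)})\otimes\vec{n}_j$, respectively.

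Summing the tested momentum equation over $i$ and the tested stress equation over $j$ and adding them, two groups of leftover terms remain and are shown to cancel. For the bulk part, I would rewrite $\sum_i\sum_{j\in S_i}\int_{\TT_{i,j}}\nabla\vec{v}_i:\bm{\sigma}_j\dx$ as the face sum $\sum_j\bigl[\int_{\TT_{\ell(j),j}}\nabla\vec{v}_{\ell(j)}:\bm{\sigma}_j\dx+\int_{\TT_{r(j),j}}\nabla\vec{v}_{r(j)}:\bm{\sigma}_j\dx\bigr]$, which exactly cancels the corresponding volume integrals appearing on the stress side with the opposite sign. For the surface part, I would use the orientation convention $\vec{n}_{\ell(j),j}=\vec{n}_j$, $\vec{n}_{r(j),j}=-\vec{n}_j$ to convert $\sum_i\sum_{j\in S_i}\int_{\Gamma_j}\vec{v}_i\cdot\bm{\sigma}_j\cdot\vec{n}_{i,j}\,dS$ into $\sum_j\int_{\Gamma_j}(\vec{v}_{\ell(j)}-\vec{v}_{r(j)})\cdot\bm{\sigma}_j\cdot\vec{n}_j\,dS$, which is the exact negative of the staggered jump contribution $\sum_j\int_{\Gamma_j}\bm{\sigma}_j:(\vec{v}_{r(j)}-\vec{v}_{\ell(j)})\otimes\vec{n}_j\,dS$ once the pointwise identity $\bm{\sigma}_j:(\vec{a}\otimes\vec{n})=\vec{a}\cdot\bm{\sigma}_j\cdot\vec{n}$ is applied. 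Under standard boundary conditions on $\partial\Omega$ (e.g.\ periodic, free-surface, or prescribed-velocity), boundary faces contribute nothing, so $dE_h/dt = 0$.

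The principal difficulty is not conceptual but notational: one must keep very careful track of the orientation conventions $\vec{n}_{i,j}$ versus $\vec{n}_j$ when folding the primal double sum $\sum_i\sum_{j\in S_i}$ into a face sum $\sum_j$, and systematically exploit the minor and major symmetries of $\E$ whenever a contraction $\E:\E^{-1}$ appears. Once that bookkeeping is done cleanly, the pairing between primal surface traces and dual jump integrals, which is the structural reason for the staggered DG discretisation in the first place, makes the cancellation exact and the semi-discrete energy identically conserved.
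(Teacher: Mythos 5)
Your proposal is correct and follows essentially the same route as the paper's proof: testing the momentum equation with $\vec{v}_i$ and the stress equation with $\E^{-1}\cdot\bm{\sigma}_j$, invoking the minor/major symmetries to reduce $\E^{-1}\cdot\bm{\sigma}:\E\cdot\nabla\vec{v}$ to $\bm{\sigma}:\nabla\vec{v}$, converting the time-derivative terms into half the time derivative of the quadratic energy forms, and cancelling the right-hand sides by folding the elementwise double sum into a face sum using the continuity of $\bm{\sigma}_j$ across $\Gamma_j$. Your explicit bookkeeping of the orientation convention $\vec{n}_{\ell(j),j}=\vec{n}_j$, $\vec{n}_{r(j),j}=-\vec{n}_j$ and the remark on boundary faces are slightly more detailed than the paper's presentation but do not constitute a different argument.
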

\begin{proof}
Since $\E=\E^\top > 0$ one also has $\E^{-1} = \E^{-\top} > 0$. The semi-discrete form of the scheme with no volume source terms is given by 
\begin{eqnarray}
\int\limits_{\TT_i}{{\phi}^{(i)} \diff{(\rho\vec{v})_i}{t}\dx}= 
\sum\limits_{j \in S_i}\left(\int\limits_{\Gamma_j}{{\phi}^{(i)} \bm{\sigma}_j \cdot \vec{n}_{i,j} dS} -\int\limits_{\TT_{i,j}}{\nabla {\phi}^{(i)} \bm{\sigma}_j \dx}  \right),
\label{eq:PNM10}
\end{eqnarray}
\begin{eqnarray}
	 \int\limits_{\QQ_j}{ {\psi}^{(j)} \diff{\bm{\sigma}_j}{t} \dx}= 
													 \int\limits_{\TT_{\ell(j),j}}{ {\psi}^{(j)} \E_j \cdot \nabla \vec{v}_{\ell(j)} \dx} 
												 + \int\limits_{\TT_{r(j),j}}{    {\psi}^{(j)} \E_j \cdot \nabla \vec{v}_{r(j)}    \dx} 
												 + \int\limits_{\Gamma_{j}}{ {\psi}^{(j)} \E_j \cdot \left( \vec{v}_{r(j)}-\vec{v}_{\ell(j)} \right) \otimes \vec{n}_j dS }. 
\label{eq:PNM11}
\end{eqnarray}
Since the material is assumed to be homogeneous, we can take $ {\psi}^{(j)}=\E_j^{-1} \cdot \bm{\sigma}_j$ and $ {\phi}^{(i)}= \vec{v}_i$ as test functions, 
sum up all contributions (we use the index contraction $\bm{\sigma} : \bm{B} = \sigma_{ij} B_{ij}$ and the identity 
$\E^{-1} \cdot \bm{\sigma} : \E \cdot \mathbf{B} = E^{-1}_{ijmn} \sigma_{mn} E_{ijkl} B_{kl} = E^{-1}_{mnij} E_{ijkl} \sigma_{mn}  B_{kl} = 
\delta_{mnkl} \sigma_{mn} B_{kl} = \sigma_{kl} B_{kl} = \bm{\sigma} : \bm{B} $) and thus obtain the two \emph{scalar} relations  
\begin{equation}
\int\limits_{\TT_i}{ \vec{v}_i \cdot \diff{(\rho\vec{v})_i}{t}\dx}=
\sum\limits_{j \in S_i}\left(\int\limits_{\Gamma_j}{ \vec{v}_i \cdot \left( \bm{\sigma}_j \cdot \vec{n}_{i,j} \right) dS } - 
\int\limits_{\TT_{i,j}}{  \nabla\vec{v}_i : \bm{\sigma}_j \dx}  \right),
\label{eq:PNM102}
\end{equation}
\begin{eqnarray}
	\int\limits_{\QQ_j}{ \left( \E_j^{-1} \cdot \bm{\sigma}_j \right) : \diff{\bm{\sigma}_j}{t} \dx}=
												  \int\limits_{\TT_{\ell(j),j}}{\bm{\sigma}_j :  \nabla \vec{v}_{\ell(j)}  \dx} 
												+ \int\limits_{\TT_{r(j),j}}{\bm{\sigma}_j :     \nabla \vec{v}_{r(j)}     \dx} 
												+ \int\limits_{\Gamma_{j}}{\bm{\sigma}_j :  \left( \vec{v}_{r(j)}-\vec{v}_{\ell(j)} \right) \otimes \vec{n}_j  dS }. 
\label{eq:PNM112}
\end{eqnarray}
Summing over the entire domain yields 
\begin{equation}
\sum_{i=1}^{N_i}\int\limits_{\TT_i}{ \vec{v}_i \cdot \diff{(\rho\vec{v})_i}{t}\dx}= 
\sum_{i=1}^{N_i}\sum\limits_{j \in S_i}\left(\int\limits_{\Gamma_j}{ \vec{v}_i \cdot \left( \bm{\sigma}_j \cdot \vec{n}_{i,j} \right) dS } -\int\limits_{\TT_{i,j}}{ \nabla\vec{v}_i : \bm{\sigma}_j \dx}  \right),
\label{eq:PNM103}
\end{equation}
\begin{equation}
	\sum_{j=1}^{N_j}\int\limits_{\QQ_j}{ \left( \E_j^{-1} \cdot \bm{\sigma}_j \right)  : \diff{\bm{\sigma}_j}{t} \dx}=
													\sum_{j=1}^{N_j}\left( \int\limits_{\TT_{\ell(j),j}}{\bm{\sigma}_j : \nabla \vec{v}_{\ell(j)} \dx} \right.
												+\int\limits_{\TT_{r(j),j}}{\bm{\sigma}_j : \nabla \vec{v}_{r(j)} \dx}
												\left.+\int\limits_{\Gamma_{j}}{\bm{\sigma}_j : (\vec{v}_{r(j)}-\vec{v}_{\ell(j)}) \otimes \vec{n}_j dS }\right). 
\label{eq:PNM113}
\end{equation}
With $\bm{\sigma}=\bm{\sigma}^\top$, $\E=\E^\top$ and therefore $\E^{-1}=\E^{-\top}$ we can rewrite the time derivative terms as 
\begin{equation}
		\sum_{i=1}^{N_i}\int\limits_{\TT_i}{\vec{v}_i \cdot \diff{(\rho\vec{v})_i}{t}\dx}=  \frac{1}{2}\int\limits_{\Omega}{ \diff{\rho_h \vec{v}_h^2}{t}\dx},
\label{eq:SP1}
\end{equation}
\begin{equation}
	\sum_{j=1}^{N_j}\int\limits_{\QQ_j}{\left( \E_j^{-1} \cdot \bm{\sigma}_j \right) : \diff{\bm{\sigma}_j}{t} \dx}=\frac{1}{2} \int\limits_{\Omega}{\frac{\partial}{\partial t} {\left( \bm{\sigma}_h : \E_h^{-1} \cdot \bm{\sigma}_h \right)}  \dx},
\label{eq:SP2}
\end{equation}
 and since $\boldsymbol{\sigma}_j$ is continuous across  ${\Gamma_{j}}$ the right hand side of \eref{eq:PNM103} can be written in terms of the faces ${\Gamma_{j}}$ as
\begin{eqnarray}
	\sum_{i=1}^{N_i}\sum\limits_{j \in S_i}\left(\int\limits_{\Gamma_j}{\vec{v}_i \cdot \left( \bm{\sigma}_j \cdot \vec{n}_{i,j} \right) dS } -\int\limits_{\TT_{i,j}}{ \nabla \vec{v}_i : \bm{\sigma}_j \dx}   \right)=
	\sum_{j=1}^{N_j}\left( \int\limits_{\Gamma_j}{ \bm{\sigma}_j : \left( \vec{v}_{\ell(j)}-\vec{v}_{r(j)} \right) \otimes \vec{n}_{j}  dS }\right. \nonumber \\
	\left.-\int\limits_{\TT_{r(j),j}}{ \nabla\vec{v}_{r(j)} : \bm{\sigma}_j \dxt}-\int\limits_{\TT_{\ell(j),j} }{ \nabla\vec{v}_{\ell(j)} : \bm{\sigma}_j \dx} \right). 
\label{eq:SP3}
\end{eqnarray}
Summing Eqs. \eref{eq:PNM103}-\eref{eq:PNM113} and making use of Eqs. \eref{eq:SP1} and \eref{eq:SP2} and since the right hand sides of \eqref{eq:PNM103} and \eqref{eq:PNM113}  
add up to zero due to  \eref{eq:SP3}, one finally obtains 
\begin{eqnarray}
	\frac{1}{2} \int\limits_{\Omega}{\frac{\partial}{\partial t} {\left( \bm{\sigma}_h : \E_h^{-1} \cdot \bm{\sigma}_h  + \rho_h \vec{v}_h^2 \right)}  \dx}  = 0. 
\label{eq:SPF}
\end{eqnarray}
This means that the total energy, which is the sum of the kinetic energy and the mechanical energy, is conserved for the semi-discrete scheme. 
\end{proof}
We show now similar results for the fully discrete forms. The first result can be seen as a simple extension of the previous theorem using the ideas presented in \cite{DumbserFacchini,2STINS}. 
\begin{theorem}
	For homogeneous material with $\rho > 0$, $\E = \E^\top>0$ and in the absence of volume source terms, the staggered space-time DG scheme \eqref{eq:NM10} and \eqref{eq:NM11} with \eqref{eq:NM13} is  energy stable for $p_{\gamma} \geq 0$ for arbitrary meshes and for arbitrary time step size $\Delta t$. 
\end{theorem}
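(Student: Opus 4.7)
The plan is to mimic the semi-discrete energy argument of Theorem \ref{thm_P1} at the fully discrete space-time level, exploiting the fact that for a homogeneous material the fields $\vec{v}_i$ and $\E_j^{-1} \cdot \bm{\sigma}_j$ lie in the same polynomial spaces as the test functions $\tilde{\phi}^{(i)}$ and $\tilde{\psi}^{(j)}$. Concretely, I would plug $\tilde{\phi}^{(i)} \leftarrow \vec{v}_i$ into the discrete momentum equation \eqref{eq:NM10} and $\tilde{\psi}^{(j)} \leftarrow \E_j^{-1} \cdot \bm{\sigma}_j$ into the discrete Hooke equation \eqref{eq:NM11}, and then sum over all primary and dual space-time control volumes.

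For the spatial coupling terms on the right-hand sides, the cancellation that led to \eqref{eq:SP3} in Theorem \ref{thm_P1} only relies on the continuity of the discrete stress tensor across the primary faces $\Gamma_j$ (granted by the staggered construction) and on the symmetry of $\bm{\sigma}_j$. Both properties hold pointwise in time, so the very same identity carries over after integration over $T^{n+1}$, and all surface and volume coupling terms drop out of the combined equation. The genuinely new ingredient is the time-derivative term, which via \eqref{eq:NM13} and the above test-function choice produces, for each primary element, the three contributions $\rho\!\int_{\TT_i}\vec{v}_i^2(t^{n+1,-})\dx - \rho\!\int_{\TT_i}\vec{v}_i(t^{n,+})\cdot\vec{v}_i(t^{n,-})\dx - \rho\!\int_{\TT_i^\st}\vec{v}_i\cdot\partial_t\vec{v}_i\,\dxt$. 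Using $\vec{v}_i\cdot\partial_t\vec{v}_i=\halb\,\partial_t\vec{v}_i^2$ collapses the last integral to boundary values at $t^{n+1,-}$ and $t^{n,+}$, and then completing the square via the elementary identity $\halb a^2 - a\cdot b = \halb(a-b)^2 - \halb b^2$ applied with $a=\vec{v}_i(t^{n,+})$, $b=\vec{v}_i(t^{n,-})$ yields
\begin{equation*}
\sum_i \int_{\TT_i^\st} \vec{v}_i \cdot \partial_t(\rho\vec{v})_i\,\dxt \; = \; \halb \int_\Omega \rho_h \bigl(\vec{v}_h^{n+1,-}\bigr)^2 \dx \; - \; \halb \int_\Omega \rho_h \bigl(\vec{v}_h^{n,-}\bigr)^2 \dx \; + \; \halb \int_\Omega \rho_h \bigl|\vec{v}_h^{n,+} - \vec{v}_h^{n,-}\bigr|^2 \dx .
\end{equation*}

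The same manipulation, applied with the weight $\E^{-1} = \E^{-\top} > 0$, delivers the analogous identity for the stress contribution with a non-negative jump $\halb\bigl(\bm{\sigma}_h^{n,+}-\bm{\sigma}_h^{n,-}\bigr) : \E^{-1}\cdot\bigl(\bm{\sigma}_h^{n,+}-\bm{\sigma}_h^{n,-}\bigr)$. Adding the two scalar identities and invoking the spatial cancellation then gives $\mathcal{E}^{n+1,-} - \mathcal{E}^{n,-} = -\mathcal{J}_v - \mathcal{J}_\sigma \leq 0$, with the total discrete energy $\mathcal{E}^{n,-} = \halb\!\int_\Omega [\rho_h(\vec{v}_h^{n,-})^2 + \bm{\sigma}_h^{n,-}:\E^{-1}\cdot\bm{\sigma}_h^{n,-}]\dx$ and non-negative jump dissipations $\mathcal{J}_v,\mathcal{J}_\sigma$ coming from the two completions of the square. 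This is exactly the advertised energy stability for arbitrary $p_\gamma\geq 0$, for any mesh, and for any $\Delta t$. The main obstacle is precisely the time-derivative computation: the causality-driven, upwind-in-time rule in \eqref{eq:NM13} mixes $t^{n,+}$ and $t^{n,-}$ values asymmetrically, so the non-negativity of the jump dissipation is not automatic, and it is exactly the completion-of-square identity, combined with $\rho>0$ and $\E^{-1}>0$, that secures the bound \emph{unconditionally} in both the mesh and the time step.
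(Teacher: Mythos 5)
Your proposal is correct and follows essentially the same route as the paper's own proof: the same Galerkin test-function choice $\tilde{\phi}^{(i)}=\vec{v}_i$, $\tilde{\psi}^{(j)}=\E_j^{-1}\cdot\bm{\sigma}_j$, the same cancellation of the spatial coupling terms as in the semi-discrete theorem, and the same treatment of the upwind-in-time terms, where your completion of the square $\halb a^2 - a\cdot b = \halb(a-b)^2-\halb b^2$ is exactly the paper's step of adding and subtracting $\halb\int_\Omega \rho_h \vec{v}_h^2(\xx,t^{n,-})\,d\xx$ and recognizing the resulting quadratic forms. The identification of the non-negative jump dissipation at $t^n$ as the source of the inequality matches the paper's conclusion.
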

\begin{proof}
The fully-discrete staggered space-time DG method \eqref{eq:NM10} and \eqref{eq:NM11} with \eqref{eq:NM13} in the absence of volume source terms reads 
\begin{eqnarray}
\int\limits_{\TT_i}{\rho_i  \tilde{\phi}^{(i)}(\xx,t^{n+1,-})  \vec{v}_i(\xx,t^{n+1,-}) \dx} - \int\limits_{\TT_i}{\rho_i  \tilde{\phi}^{(i)}(\xx,t^{n,+})  \vec{v}_i(\xx,t^{n,-}) \dx} -  
\int\limits_{\TT_i^{st}}{ \rho_i \diff{\tilde{\phi}^{(i)}}{t}  \vec{v}_i \dxt} = \nonumber \\
\sum\limits_{j \in S_i}\left(\int\limits_{\Gamma_j^{st}}{\tilde{\phi}^{(i)} \bm{\sigma}_j \cdot \vec{n}_{i,j} dS \, dt} -\int\limits_{\TT_{i,j}^{st}}{\nabla \tilde{\phi}^{(i)} \bm{\sigma}_j \dxt}  \right),
\label{eq:PNM10st}
\end{eqnarray}
\begin{eqnarray}
\int\limits_{\QQ_j}{ \tilde{\psi}^{(i)}(\xx,t^{n+1,-})  \bm{\sigma}_j(\xx,t^{n+1,-}) \dx} - \int\limits_{\QQ_j}{  \tilde{\psi}^{(i)}(\xx,t^{n,+}) \bm{\sigma}_j(\xx,t^{n,-}) \dx} -  
\int\limits_{\QQ_j^{st}}{   \diff{\tilde{\psi}^{(i)}}{t}  \bm{\sigma}_j \dxt} = \nonumber \\
													\int\limits_{\TT_{\ell(j),j}^{st}}{ \tilde{\psi}^{(j)} \E_j \cdot \nabla \vec{v}_{\ell(j)} \dxt} 
												+\int\limits_{\TT_{r(j),j}^{st}}{ \tilde{\psi}^{(j)} \E_j \cdot \nabla \vec{v}_{r(j)} \dxt}
												+\int\limits_{\Gamma_{j}^{st}}{ \tilde{\psi}^{(j)} \E_j \cdot (\vec{v}_{r(j)}-\vec{v}_{\ell(j)}) \otimes \vec{n}_j dS \, dt }. 
\label{eq:PNM11st}
\end{eqnarray}
Taking $ \tilde{\psi}^{(j)}=\E_j^{-1} \cdot \bm{\sigma}_j$ and $ \tilde{\phi}^{(i)}= \vec{v}_i$ as test functions, summing up all contributions and proceeding in the same manner as in the proof of the  previous theorem, we arrive at the following intermediate scalar expression (also here the right hand side terms add again up to zero, for the same reason as before): 
\begin{eqnarray}  
\int\limits_{\Omega}{\rho_h    \vec{v}_h(\xx,t^{n+1,-}) \cdot \vec{v}_h(\xx,t^{n+1,-}) \dx} - \int\limits_{\Omega}{\rho_h  \vec{v}_h(\xx,t^{n,+}) \cdot  \vec{v}_h(\xx,t^{n,-}) \dx} -  
\halb \int \limits_{t^n}^{t^{n+1}} \int\limits_{\Omega}{  \diff{\rho_h \vec{v}_h^2 }{t} \dxt} + \nonumber \\
\int\limits_{\Omega}{ \bm{\sigma}_h(\xx,t^{n+1,-}) :  \E_h^{-1} \cdot \bm{\sigma}_h(\xx,t^{n+1,-})      \dx} - \int\limits_{\Omega}{  \bm{\sigma}_h(\xx,t^{n,-}) : \E_h^{-1} \cdot \bm{\sigma}_h(\xx,t^{n,+})    \dx} -  
\halb \int \limits_{t^n}^{t^{n+1}} \int\limits_{\Omega}{ \diff{}{t} \left( \bm{\sigma}_h : \E_h^{-1} \cdot \bm{\sigma}_h \right) \dxt} = 0. \nonumber 
\label{eq:stsum1}
\end{eqnarray}  
The terms containing the time derivatives can be integrated by parts in time and thus one obtains:  
\begin{eqnarray}  
  \halb \! \int\limits_{\Omega}{ \! \rho_h    \vec{v}^2_h(\xx,t^{n+1,-}) \dx} 
 - \int\limits_{\Omega}{ \! \rho_h  \vec{v}_h(\xx,t^{n,+}) \cdot  \vec{v}_h(\xx,t^{n,-}) \dx}   
+ \halb \! \int\limits_{\Omega}{ \! \rho_h    \vec{v}^2_h(\xx,t^{n,+}) \dx}  + \nonumber \\  
\halb \! \int\limits_{\Omega}{ \!\! \bm{\sigma}_h(\xx,t^{n+1,-}) :  \E_h^{-1} \cdot \bm{\sigma}_h(\xx,t^{n+1,-})      \dx} 
- \int\limits_{\Omega}{  \!\! \bm{\sigma}_h(\xx,t^{n,-}) : \E_h^{-1} \cdot \bm{\sigma}_h(\xx,t^{n,+})    \dx} 
+ \halb \! \int\limits_{\Omega}{ \!\! \bm{\sigma}_h(\xx,t^{n,+}) :  \E_h^{-1} \cdot \bm{\sigma}_h(\xx,t^{n,+})      \dx}   = 0. \nonumber 
\label{eq:stsum2}
\end{eqnarray}  
Adding and immediately subtracting again $\halb \! \int\limits_{\Omega}{ \! \rho_h    \vec{v}^2_h(\xx,t^{n,-}) \dx}$ and $\halb \! \int\limits_{\Omega}{ \!\! \bm{\sigma}_h(\xx,t^{n,-}) :  \E_h^{-1} \cdot \bm{\sigma}_h(\xx,t^{n,-})      \dx}$   yields 
\begin{eqnarray}  
  \halb \! \int\limits_{\Omega}{ \! \left( \rho_h    \vec{v}^2_h(\xx,t^{n+1,-}) + \bm{\sigma}_h(\xx,t^{n+1,-}) :  \E_h^{-1} \cdot \bm{\sigma}_h(\xx,t^{n+1,-}) \right) \dx} -
  \halb \! \int\limits_{\Omega}{ \! \left( \rho_h    \vec{v}^2_h(\xx,t^{n,-}) + \bm{\sigma}_h(\xx,t^{n,-}) :  \E_h^{-1} \cdot \bm{\sigma}_h(\xx,t^{n,-}) \right) \dx} \nonumber \\ 
+ \halb \! \int\limits_{\Omega}{ \! \rho_h    \vec{v}^2_h(\xx,t^{n,-}) \dx} 	
- \int\limits_{\Omega}{ \! \rho_h  \vec{v}_h(\xx,t^{n,+}) \cdot  \vec{v}_h(\xx,t^{n,-}) \dx} 	
+ \halb \! \int\limits_{\Omega}{ \! \rho_h    \vec{v}^2_h(\xx,t^{n,+}) \dx}  + \nonumber \\  
+ \halb \! \int\limits_{\Omega}{ \!\! \bm{\sigma}_h(\xx,t^{n,-}) :  \E_h^{-1} \cdot \bm{\sigma}_h(\xx,t^{n,-})      \dx}
- \int\limits_{\Omega}{  \!\! \bm{\sigma}_h(\xx,t^{n,-}) : \E_h^{-1} \cdot \bm{\sigma}_h(\xx,t^{n,+})    \dx} 
+ \halb \! \int\limits_{\Omega}{ \!\! \bm{\sigma}_h(\xx,t^{n,+}) :  \E_h^{-1} \cdot \bm{\sigma}_h(\xx,t^{n,+})      \dx}   = 0. \nonumber 
\label{eq:stsum3}
\end{eqnarray}  
The quadratic forms in the expressions above can be easily recognized, hence 
\begin{eqnarray}  
  \halb \! \int\limits_{\Omega}{ \! \left( \rho_h    \vec{v}^2_h(\xx,t^{n+1,-}) + \bm{\sigma}_h(\xx,t^{n+1,-}) :  \E_h^{-1} \cdot \bm{\sigma}_h(\xx,t^{n+1,-}) \right) \dx} -
  \halb \! \int\limits_{\Omega}{ \! \left( \rho_h    \vec{v}^2_h(\xx,t^{n,-}) + \bm{\sigma}_h(\xx,t^{n,-}) :  \E_h^{-1} \cdot \bm{\sigma}_h(\xx,t^{n,-}) \right) \dx} \nonumber \\ 
+ \halb \! \int\limits_{\Omega}{ \! \rho_h    \left( \vec{v}_h(\xx,t^{n,+}) - \vec{v}_h(\xx,t^{n,-}) \right)^2 \dx}  
+ \halb \! \int\limits_{\Omega}{ \!\! \left(  \bm{\sigma}_h(\xx,t^{n,+}) - \bm{\sigma}_h(\xx,t^{n,-}) \right) :  \E_h^{-1} \cdot \left(  \bm{\sigma}_h(\xx,t^{n,+}) - \bm{\sigma}_h(\xx,t^{n,-})  \right)    \dx} = 0. \nonumber \\ 
\label{eq:stsum4}
\end{eqnarray}  
Since $\rho_h > 0$ and $\E_h > 0$ and thus the jump terms at time $t^n$ are non-negative, 
\begin{equation}
\halb \! \int\limits_{\Omega}{ \! \rho_h    \left( \vec{v}_h(\xx,t^{n,+}) - \vec{v}_h(\xx,t^{n,-}) \right)^2 \dx}  
+ \halb \! \int\limits_{\Omega}{ \!\! \left(  \bm{\sigma}_h(\xx,t^{n,+}) - \bm{\sigma}_h(\xx,t^{n,-}) \right) :  \E_h^{-1} \cdot \left(  \bm{\sigma}_h(\xx,t^{n,+}) - \bm{\sigma}_h(\xx,t^{n,-})  \right) }  
\geq 0,  
\label{eqn:inequality} 
\end{equation} 
we finally obtain from \eqref{eq:stsum4} and \eqref{eqn:inequality} the sought result which relates the total energy at the new time level with the total energy at the old time level as 
\begin{equation}  
  \halb \! \int\limits_{\Omega}{ \! \left( \rho_h    \vec{v}^2_h(\xx,t^{n+1,-}) + \bm{\sigma}_h(\xx,t^{n+1,-}) :  \E_h^{-1} \cdot \bm{\sigma}_h(\xx,t^{n+1,-}) \right) \dx} 
	\leq 
  \halb \! \int\limits_{\Omega}{ \! \left( \rho_h    \vec{v}^2_h(\xx,t^{n,-}) + \bm{\sigma}_h(\xx,t^{n,-}) :  \E_h^{-1} \cdot \bm{\sigma}_h(\xx,t^{n,-}) \right) \dx},  
\label{eq:stsum5}
\end{equation}   
from which we can conclude that our new staggered space-time DG scheme for the linear elasticity equations is \emph{energy stable} for arbitrary polynomial approximation degree, 
general meshes and arbitrary time step size $\Delta t$. 
\end{proof}
The previous theorem shows that the method is energy stable and that the rate of energy loss is proportional to the jump in the discrete solution at the interface between two time slices. 
This rises the almost natural question on what happens if we employ a second order time discretization using the classical Crank-Nicolson scheme. The following theorem give us an interesting result: 
\begin{theorem}
For homogeneous material with $\rho>0$, $\E=\E^\top > 0$ and in the absence of volume source terms the fully-discrete staggered DG scheme with $p_\gamma=0$ and Crank-Nicolson time discretization 
is exactly energy preserving. 
\end{theorem}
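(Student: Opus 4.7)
The plan is to transport the argument of Theorem \ref{thm_P1} to the fully discrete Crank-Nicolson update \eqref{eq:NM25.cn}-\eqref{eq:NM26.cn}, exploiting the exactness of the midpoint rule. Because $p_\gamma=0$, the test space consists of purely spatial polynomials, so the half-sums $\vec{v}^{n+\halb}_i(\xx)=\halb(\vec{v}^n_i(\xx)+\vec{v}^{n+1}_i(\xx))$ and $\E^{-1}\cdot\bm{\sigma}^{n+\halb}_j(\xx)$ are admissible test functions. I would test \eqref{eq:NM25.cn} with $\vec{v}^{n+\halb}_i$ and \eqref{eq:NM26.cn} with $\E^{-1}\cdot\bm{\sigma}^{n+\halb}_j$, then sum the first over all primary elements and the second over all dual elements.

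For the time-difference terms on the left-hand sides I invoke the matrix-weighted midpoint identity
\begin{equation}
(\vec{a}^{n+\halb})^\top A\,(\vec{a}^{n+1}-\vec{a}^n)=\halb\bigl((\vec{a}^{n+1})^\top A\vec{a}^{n+1}-(\vec{a}^n)^\top A\vec{a}^n\bigr),\qquad A=A^\top,
\end{equation}
which applies because $\bar{\M}_i$ and $\M_j$ are symmetric and because the bilinear form $(\bm{A},\bm{B})\mapsto\int \bm{A}:\E^{-1}\cdot \bm{B}\,\dx$ inherits the major symmetry of $\E$. Consequently the left-hand sides telescope exactly to
\begin{equation}
\halb\!\int_\Omega\!\rho_h(\vec{v}_h^{n+1})^2\dx-\halb\!\int_\Omega\!\rho_h(\vec{v}_h^{n})^2\dx,\qquad \halb\!\int_\Omega\!\bm{\sigma}_h^{n+1}:\E_h^{-1}\cdot\bm{\sigma}_h^{n+1}\dx-\halb\!\int_\Omega\!\bm{\sigma}_h^{n}:\E_h^{-1}\cdot\bm{\sigma}_h^{n}\dx,
\end{equation}
with no residual cross term $\vec{v}^{n+1}\cdot\vec{v}^n$ or $\bm{\sigma}^{n+1}:\bm{\sigma}^n$. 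This is precisely where Crank-Nicolson differs from the upwind-in-time treatment used for generic $p_\gamma\geq 0$: the non-negative jump contribution \eqref{eqn:inequality} of the previous theorem simply does not appear.

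For the right-hand sides I would replay the semi-discrete computation. One factor of $\E$ cancels against $\E^{-1}$ in the tested stress equation, leaving pure $\Q$-contractions with $\vec{v}^{n+\halb}$; the duality $\Q_{i,j}=-\D_{i,j}^\top$ (already noted in the proof of symmetry of the velocity system) together with the re-indexing of the face-based sum as $\sum_i\sum_{j\in S_i}$ shows that the spatial coupling term arising from the tested stress equation is the negative of the spatial coupling term arising from the tested momentum equation. Adding the two tested equations therefore cancels the right-hand sides exactly, in the very same way that the right-hand sides of \eqref{eq:PNM103} and \eqref{eq:PNM113} annihilated through \eqref{eq:SP3}. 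What remains is the identity
\begin{equation}
\halb\!\int_\Omega\!\Bigl(\rho_h(\vec{v}_h^{n+1})^2+\bm{\sigma}_h^{n+1}:\E_h^{-1}\cdot\bm{\sigma}_h^{n+1}\Bigr)\dx=\halb\!\int_\Omega\!\Bigl(\rho_h(\vec{v}_h^{n})^2+\bm{\sigma}_h^{n}:\E_h^{-1}\cdot\bm{\sigma}_h^{n}\Bigr)\dx,
\end{equation}
which is exact energy preservation from one time level to the next.

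The main obstacle is not deep but bookkeeping: one must verify that $\E^{-1}\cdot\bm{\sigma}^{n+\halb}_j$ is a legitimate test function (it is, since the spatial test space for $\bm{\sigma}_j$ is vector-valued polynomial and closed under contraction with the constant tensor $\E^{-1}$), and that the telescoping identity genuinely needs only the symmetries already secured in the paper, namely those of $\bar{\M}_i$, $\M_j$, and the major symmetry of $\E$. Once these points are pinned down, the cancellation of the coupling terms is a line-for-line replay of the semi-discrete energy identity of Theorem \ref{thm_P1}, with the continuous identity $\frac{d}{dt}(\halb f^2)=f\dot f$ replaced by its exact Crank-Nicolson counterpart $\halb(f^{n+1}+f^n)(f^{n+1}-f^n)=\halb((f^{n+1})^2-(f^n)^2)$, which is the algebraic reason why exact conservation, and not merely stability, holds in this special case.
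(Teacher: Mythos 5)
Your proposal is correct and follows essentially the same route as the paper: test the Crank--Nicolson update with the midpoint values $\vec{v}_i^{n+\halb}$ and $\E^{-1}\cdot\bm{\sigma}_j^{n+\halb}$, use the exact midpoint identity $\halb(f^{n+1}+f^n)(f^{n+1}-f^n)=\halb\bigl((f^{n+1})^2-(f^n)^2\bigr)$ to telescope the time-difference terms into the energy difference, and cancel the spatial coupling terms by replaying the right-hand-side cancellation of the semi-discrete energy identity. The paper phrases the cancellation via the variational form and the continuity of $\bm{\sigma}_j$ across $\Gamma_j$ rather than via $\tilde{\Q}_{i,j}=-\D_{i,j}^\top$, but these are equivalent, so there is nothing further to add.
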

\begin{proof}
Starting from the semi-discrete form \eref{eq:PNM10} and \eref{eq:PNM11}, inserting the standard Crank-Nicolson time discretization  
and using as test functions $\psi^{(j)}=\E_j^{-1} \cdot \bm{\sigma}_j^{n+\frac{1}{2}}$ and $\phi^{(i)}=\vec{v}_j^{n+\frac{1}{2}}$, one obtains 
\begin{eqnarray}
\int\limits_{\TT_i}{ \rho_i \vec{v}_i^{n+\frac{1}{2}} \cdot \frac{\vec{v}_i^{n+1}-\vec{v}_i^{n}}{\Delta t}\dx}= 
\sum\limits_{j \in S_i}\left(\int\limits_{\Gamma_j}{ \vec{v}_i^{n+\frac{1}{2}} \cdot \bm{\sigma}_j^{n+\frac{1}{2}} \cdot \vec{n}_{i,j} dS} -\int\limits_{\TT_{i,j}}{\nabla\vec{v}_i^{n+\frac{1}{2}} \cdot \bm{\sigma}_j^{n+\frac{1}{2}} \dx}  \right), \nonumber  
\label{eq:P3NM102}
\end{eqnarray}
\begin{eqnarray}
	\int\limits_{\QQ_j}{ \E_j^{-1} \cdot \bm{\sigma}_j^{n+\frac{1}{2}} : \frac{\bm{\sigma}_j^{n+1}-\bm{\sigma}_j^{n}}{\Delta t} \dx}=
													\int\limits_{\TT_{\ell(j),j}}{\bm{\sigma}_j^{n+\frac{1}{2}} : \nabla \vec{v}_{\ell(j)}^{n+\frac{1}{2}} \dx} 
												+\int\limits_{\TT_{r(j),j}}{\bm{\sigma}_j^{n+\frac{1}{2}} : \nabla \vec{v}_{r(j)}^{n+\frac{1}{2}} \dx} 
												+\int\limits_{\Gamma_{j}}{\bm{\sigma}_j^{n+\frac{1}{2}} : (\vec{v}_{r(j)}^{n+\frac{1}{2}}-\vec{v}_{\ell(j)}^{n+\frac{1}{2}}) \otimes \vec{n}_j dS}.  \nonumber \\ 
\label{eq:P3NM112}
\end{eqnarray} 
The right hand sides add again up to zero from the proof of Theorem \ref{thm_P1}, while for the discrete time derivatives we get from the definition of $\bm{\sigma}_j^{n+\frac{1}{2}}=\frac{1}{2} \left( \bm{\sigma}_j^{n}+\bm{\sigma}_j^{n+1}\right)$ and $\vec{v}_i^{n+\frac{1}{2}}=\frac{1}{2} \left( \vec{v}_i^{n}+\vec{v}_i^{n+1}\right)$ that 
\begin{eqnarray}
\int\limits_{\Omega}{ \rho_h \vec{v}_h^{n+\frac{1}{2}} \frac{\vec{v}_h^{n+1}-\vec{v}_h^{n}}{\Delta t}\dx}
=\frac{1}{\Delta t}\frac{1}{2}  \int\limits_{\Omega}{ \rho_h \left( \left(\vec{v}_h^{n+1}\right)^2-\left(\vec{v}_h^{n}\right)^2 \right) \dx},
\label{eq:SP31}
\end{eqnarray}
and a similar result for $\bm{\sigma}$. Using the same reasoning of Theorem \eref{thm_P1} we finally obtain
\begin{eqnarray}
	\frac{1}{2} \int\limits_{\Omega}{ \left( \rho_h \left(\vec{v}_h^{n+1}\right)^2 + \bm{\sigma}_h^{n+1} : \E_j^{-1} \cdot \bm{\sigma}_h^{n+1}  \right) \dx}  =  
	\frac{1}{2} \int\limits_{\Omega}{ \left( \rho_h \left(\vec{v}_h^{n  }\right)^2 + \bm{\sigma}_h^{n  } : \E_j^{-1} \cdot \bm{\sigma}_h^{n  }  \right) \dx}.    
\label{eq:SP3F}
\end{eqnarray}
and so the staggered DG scheme with the simple Crank-Nicolson time discretization is \emph{exactly energy preserving}.
\end{proof}

\section{Numerical tests}
\label{sec.results}
All test problems in this section assume isotropic material. For the definition of the initial conditions, we also make use of the state vector 
$\mathbf{U} = \left( \sigma_{xx}, \sigma_{yy}, \sigma_{xy}, u, v \right)$  in 2D and 
$\mathbf{U} = \left( \sigma_{xx}, \sigma_{yy}, \sigma_{zz}, \sigma_{xy}, \sigma_{yz}, \sigma_{xz}, u, v, w \right)$ in 3D. 
\subsection{Scattering of a plane wave on a circular cavity}
\label{sec_2dpw}
In this test case we consider a simple $p$-wave traveling in the $x$-direction and hitting a circular cavity. The computational domain is $\Omega=[-2.5,2.5]^2 - C_{0.25}$, where $C_r$ indicates the circle of radius $r$. The initial condition is  
\begin{eqnarray}
	\mathbf{U}(\xx,0) = 0.1\cdot(-2,0,4,2,0)\sin(2 \pi x),
\label{eq:NT1.1}
\end{eqnarray}
and the boundary conditions are set to be periodic on the external boundary and free surface boundary ($\bm{\sigma} \cdot \vec{n}=0$) on the circular cavity. The material parameters are homogeneous
and are chosen as $\rho=1$, $\lambda=2$ and $\mu=1$. The computational domain is 
discretized using $\Ni=5644$ triangles of characteristic mesh size {$h=0.11$}. We use a polynomial approximation degree of $p=5$ in space and $p_\gamma=1$ in time. 
The time step size is chosen as {$\Delta t = 0.01$}. 
We compare our new staggered space-time DG scheme with a well established explicit high order ADER-DG method that is the basis of the \texttt{SeisSol} code published in 
\cite{gij1,gij2,gij3,gij4,gij5,SeisSol1,SeisSol2} and its generalization under the framework of $P_NP_M$ schemes achieved in \cite{Dumbser2008}. 
For the reference solution, we use $N=M=2$ and a very fine mesh of $\Ni=563280$ triangular elements. In both cases we run the simulation up to $t_{end}=1.0$. 
A comparison of the resulting stress component $\sigma_{xx}$, colored with $\sigma_{yy}$ is shown in Figure \ref{fig.NT1.2}. 
Figure \ref{fig.NT1.3} shows the time series of all variables in $\vec{x}_1=(0.5,0.5)$ and $\vec{x}_2=(1.0,0.0)$. A very good agreement can be observed in all cases. 
Furthermore, we emphasize that the use of high order isoparametric elements is important for properly representing the curvilinear geometry of this test case. 

\begin{figure*}%
\centering 
\begin{tabular}{lr} 
\includegraphics[width=0.45\columnwidth]{./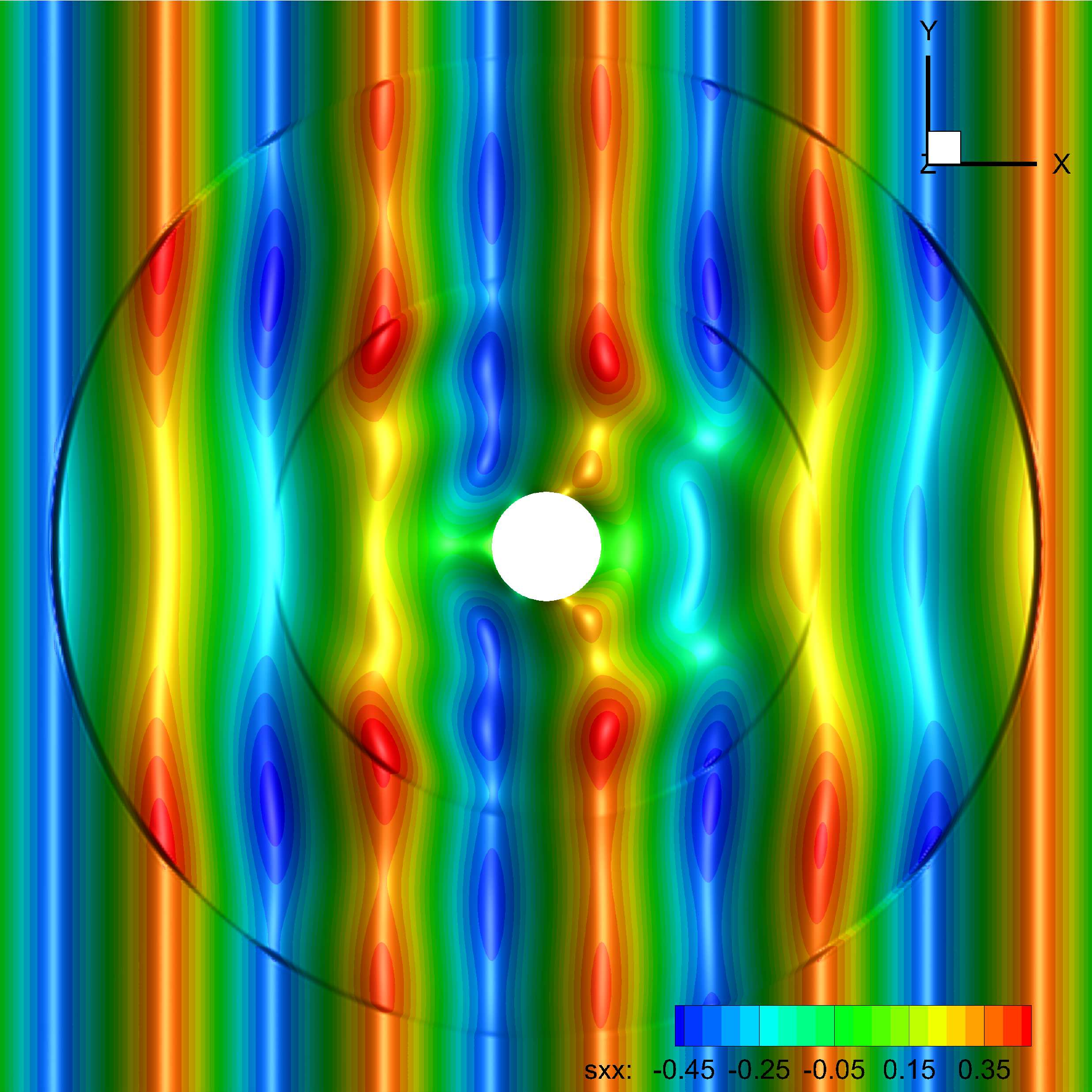} & 
\includegraphics[width=0.45\columnwidth]{./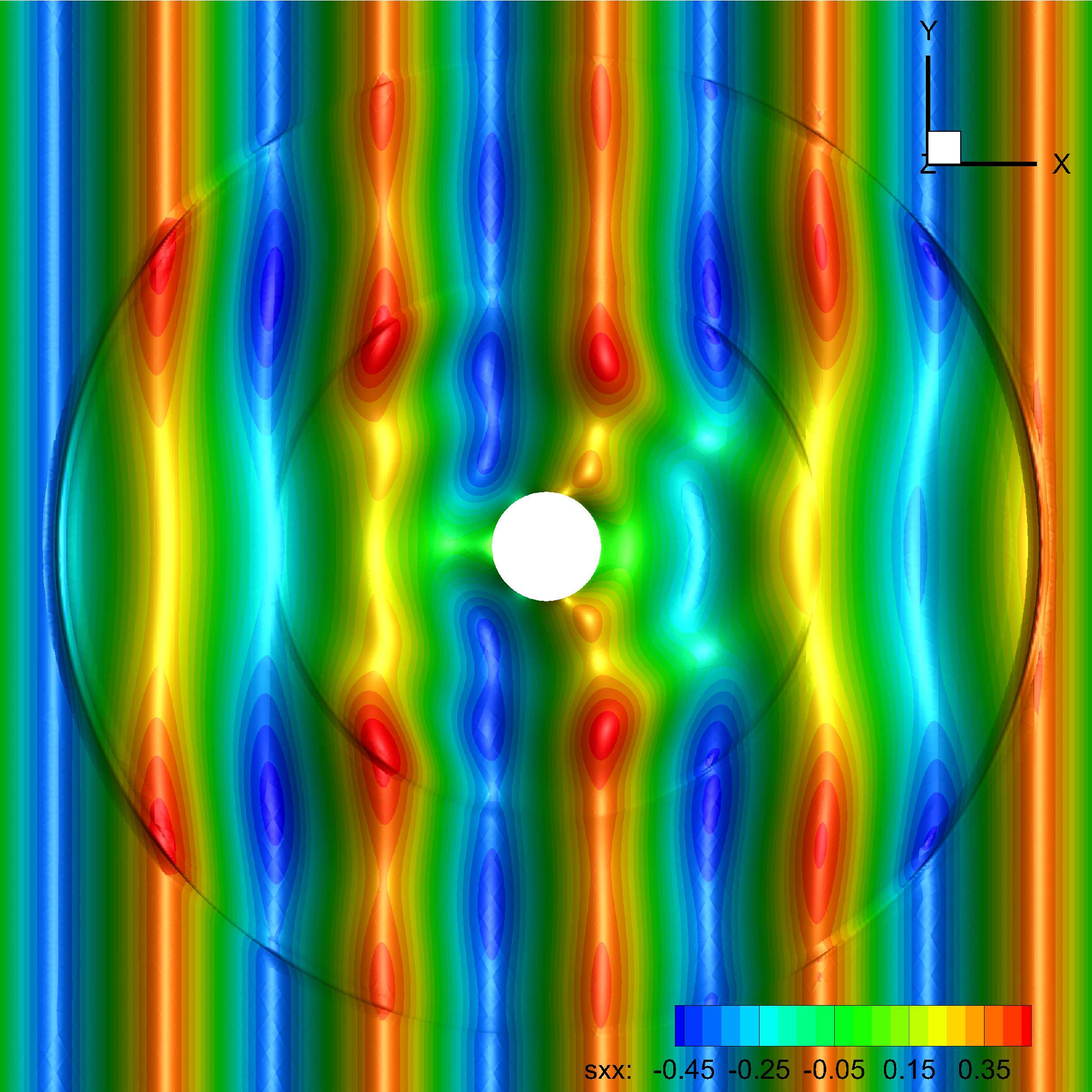} 
\end{tabular} 
\caption{Plane wave scattering on a circular cavity. Comparison of the isocontours of the stress tensor component $\sigma_{xx}$ between the reference solution given by an explicit ADER-DG scheme (left) and our new staggered space-time DG scheme (right).}%
\label{fig.NT1.2}%
\end{figure*}
\begin{figure*}%
\includegraphics[width=0.49\columnwidth]{./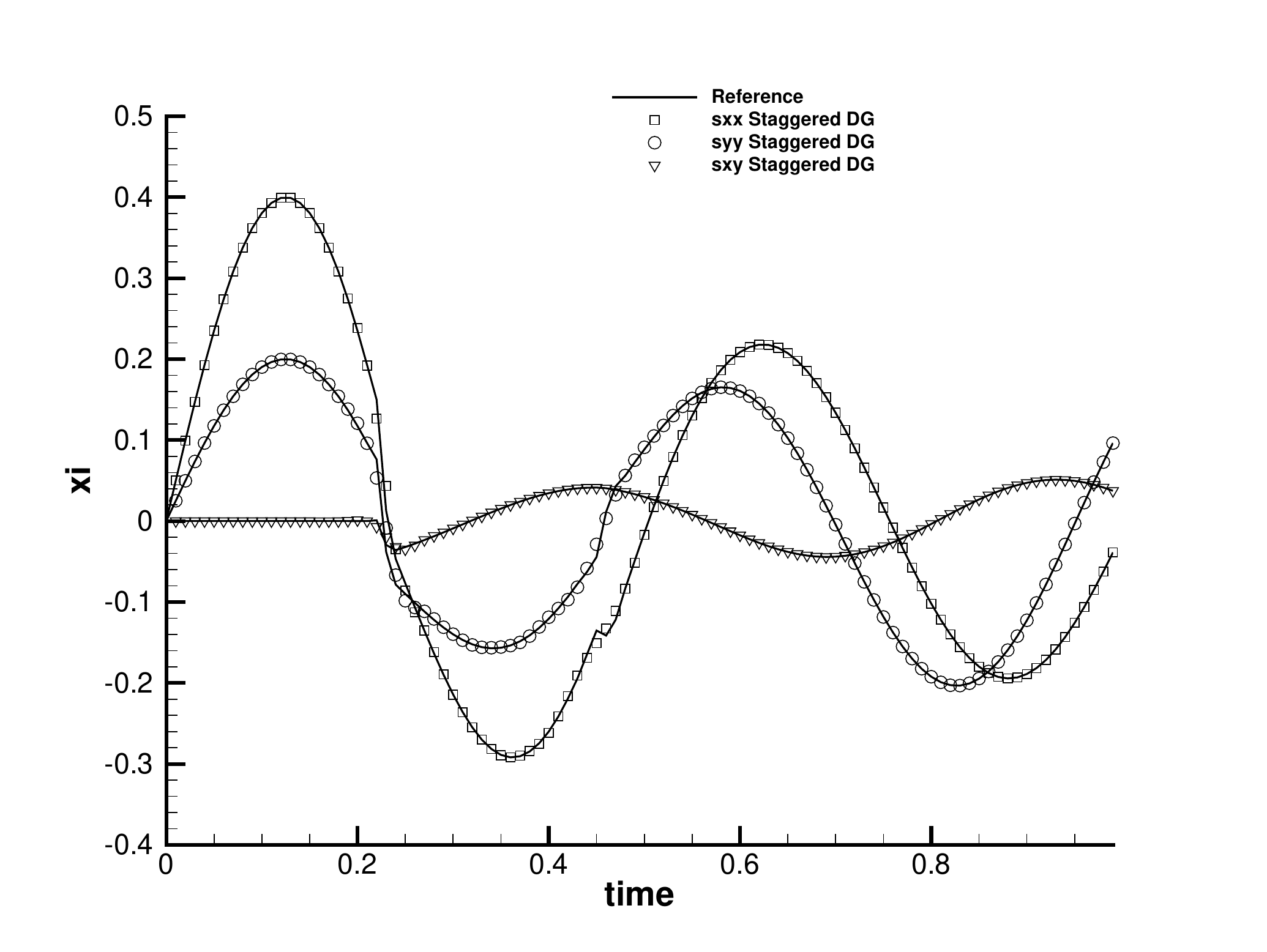} 
\includegraphics[width=0.49\columnwidth]{./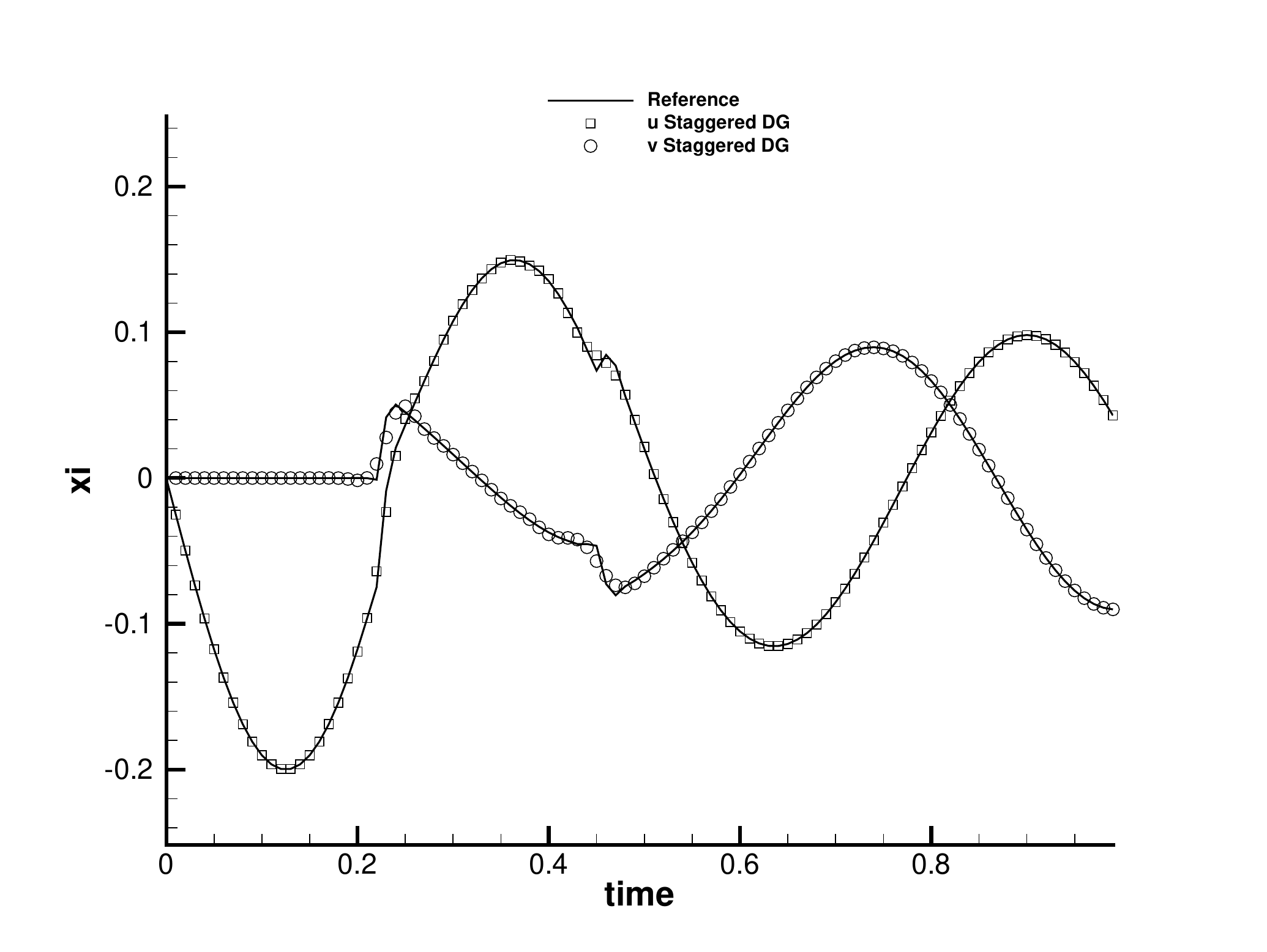} \\
\includegraphics[width=0.49\columnwidth]{./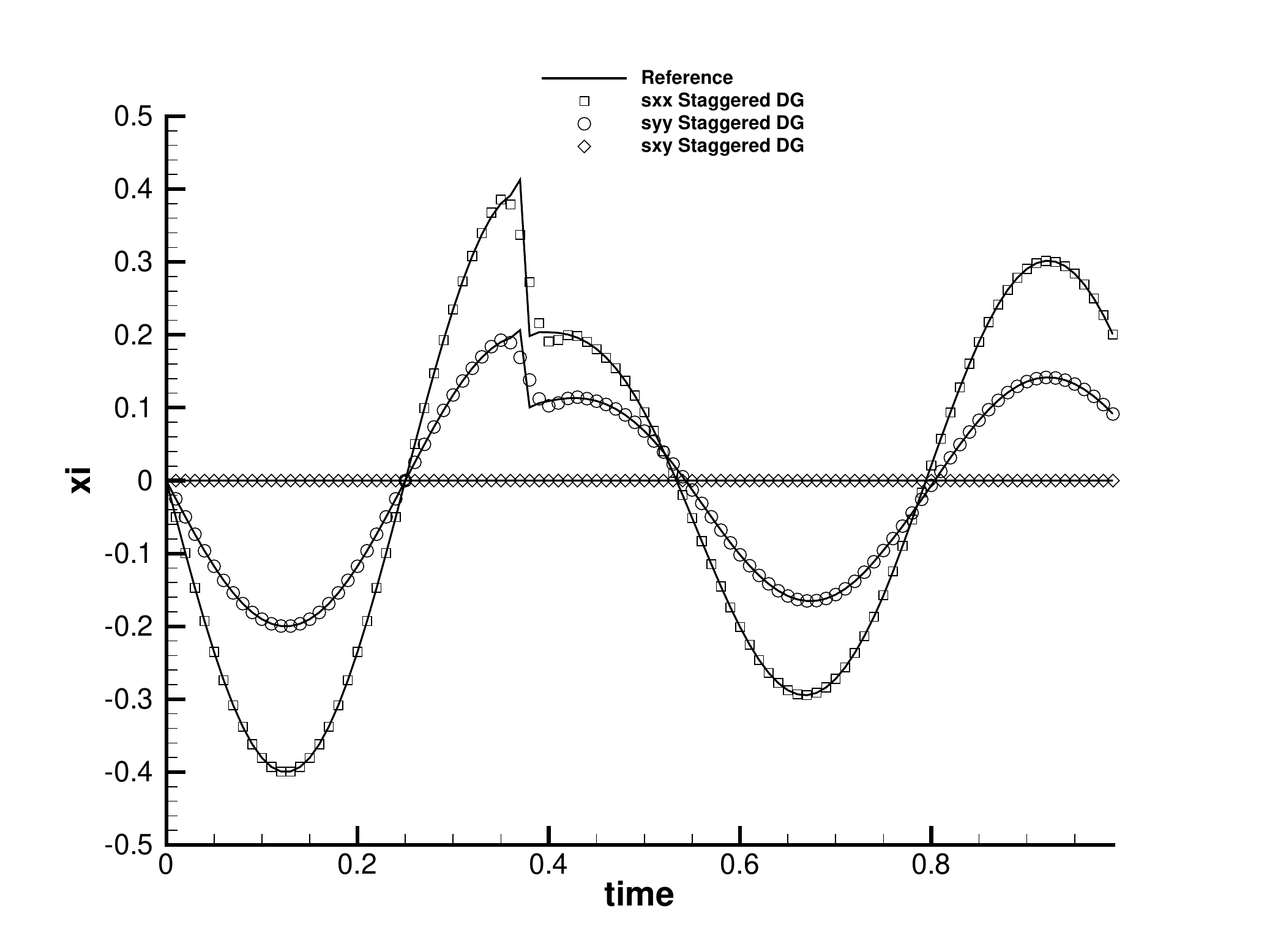} 
\includegraphics[width=0.49\columnwidth]{./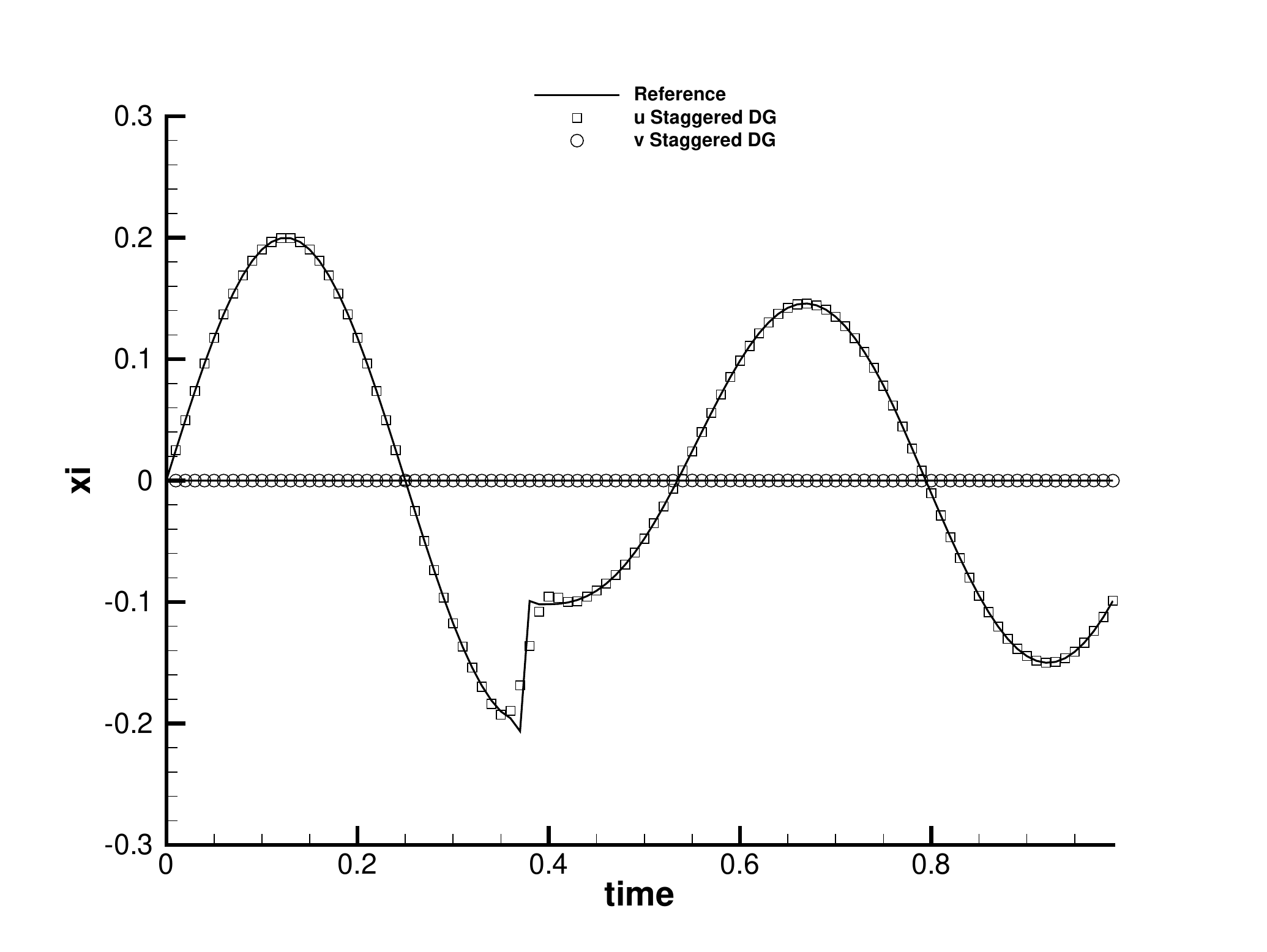}
\caption{Comparison of the stress tensor components $\sigma_{xx}$, $\sigma_{yy}$, $\sigma_{xy}$ (left) and velocity components $u,v$ (right) in the receiver point $\vec{x}_1=(0.5,0.5)$ (top) and $\vec{x}_2=(1.0,0.0)$ (bottom).}%
\label{fig.NT1.3}%
\end{figure*}
%
%
\subsection{Numerical convergence test}
\label{sec.convtest}
In this test we verify the order of accuracy and the computational efficiency of our new staggered space-time DG schemes for linear elasticity. Following \cite{gij1} we consider a combination of a $p-$ and an $s-$wave in a  square domain $\Omega=[-1.5,1.5]^2$ extended with periodic boundaries everywhere. As initial state we take 
\begin{eqnarray}
	\mathbf{U}(\xx,0) = \alpha \mathbf{r}_{p} \sin(k \cdot \vec{x})+\alpha \mathbf{r}_{s} \sin( \mathbf{k} \cdot \vec{x}),
\label{eq:NT2.1}
\end{eqnarray}
where $\alpha=0.1$; $\mathbf{k} = 2\pi \, \vec{n}$; $\vec{n}=(n_x,n_y)=(1,1)$; $\mathbf{r}_{p}$ and $\mathbf{r}_{s}$ are the eigenvectors associated with the $p-$ and $s-$ wave: 
\begin{equation}
	\mathbf{r}_{s}=\left( -2\mu n_x n_y, 2\mu n_x n_y, \mu(n_x^2-n_y^2), n_y c_s, -n_x c_s \right), \qquad 
	\mathbf{r}_{p}=\left( \lambda+2\mu n_x^2, \lambda+2\mu n_y^2, 2\mu n_x n_y, - n_x c_p, -n_y c_p \right),
\label{eq:NT2.2}
\end{equation}
with the $p-$wave speed $c_p = \sqrt{(\lambda + 2\mu)/\rho}$ and the $s-$wave speed $c_s=\sqrt{\mu / \rho}$. We set $(\lambda, \mu, \rho)=(2,1,1)$. The final time is $t_{end}=3\sqrt{2}$ so that the resulting exact solution has to be the the same as the initial one 
i.e. $\mathbf{U}(\xx,t_{end})=\mathbf{U}(\xx,0)$. In Table \ref{tab:NT1} we report the resulting $L_2$ error norms for the  entire state vector $\mathbf{U}$ 
 and the order of convergence for different polynomial approximation degrees $p=p_{\gamma}$ on a sequence of successively refined meshes of characteristic size {$h=(0.1264, 0.0842, 0.0842, 0.0505, 0.0421)$}. 
The time step size has been chosen proportional to the mesh spacing $h$ as {$\Delta t = K h$, with $K=0.112$, independent of the polynomial degree $p$}.  We also report the 
wall clock times $T_{CPU}$ {measured on $20$ cores of an Intel Xeon E5 CPU with 2.5 GHz clock speed and 128 GB of RAM.} 
From Table \ref{tab:NT1} the optimal order of convergence can be observed for all variables.  
\begin{table*}%
\begin{tabular}{cccccccccccccc}
	\hline
	$p$ & $N_i$ 	& $u$ 			& 						& $v$ 			& 						& $\sigma_{xx}$ & 						& $\sigma_{yy}$ & 						& $\sigma_{xy}$ & 						& $T_{CPU}$ \\
	\hline
	1		&	1760	& 1.253E-01 & {\small    }& 2.675E-01 & {\small    }& 5.111E-01 		& {\small    }& 3.003E-01 		& {\small    }& 1.343E-01 		& {\small    }& 4.4\\
	1		&	3960	& 4.609E-02 & {\small 2.5}& 1.284E-01 & {\small 1.8}& 2.428E-01 		& {\small 1.8}& 1.248E-01 		& {\small 2.2}& 6.143E-02 		& {\small 1.9}& 11.7\\
	1		&	7040	& 2.479E-02 & {\small 2.2}& 7.356E-02 & {\small 1.9}& 1.387E-01 		& {\small 1.9}& 6.938E-02 		& {\small 2.0}& 3.481E-02 		& {\small 2.0}& 24.3\\
	1		&	11000	& 1.567E-02 & {\small 2.1}& 4.741E-02 & {\small 2.0}& 8.931E-02 		& {\small 2.0}& 4.430E-02 		& {\small 2.0}& 2.235E-02 		& {\small 2.0}& 42.7\\
	\hline
\end{tabular}
\begin{tabular}{cccccccccccccc}
	$p$ & $N_i$ & $u$ 			& 						& $v$ 			& 						& $\sigma_{xx}$ & 						& $\sigma_{yy}$ & 						& $\sigma_{xy}$ & 						& $T_{CPU}$ \\
	\hline
	2		&	1760	& 1.512E-03 & {\small    }& 3.249E-03 & {\small    }& 6.081E-03 		& {\small    }& 3.156E-03 		& {\small    }& 1.574E-03 		& {\small    }& 27.6\\
	2		&	3960	& 3.697E-04 & {\small 3.5}& 6.568E-04 & {\small 3.9}& 1.218E-03 		& {\small 4.0}& 6.411E-04 		& {\small 3.9}& 3.186E-04 		& {\small 3.9}& 90.1\\
	2		&	7040	& 1.416E-04 & {\small 3.3}& 2.118E-04 & {\small 3.9}& 3.882E-04 		& {\small 4.0}& 2.086E-04 		& {\small 3.9}& 1.031E-04 		& {\small 3.9}& 198.1\\
	2		&	11000	& 6.901E-05 & {\small 3.2}& 8.872E-05 & {\small 3.9}& 1.601E-04 		& {\small 4.0}& 8.835E-05 		& {\small 3.9}& 4.324E-05 		& {\small 3.9}& 364.4\\
	\hline
\end{tabular}
\begin{tabular}{cccccccccccccc}
	$p$ & $N_i$ 	& $u$ 			& 						& $v$ 			& 						& $\sigma_{xx}$ & 						& $\sigma_{yy}$ & 						& $\sigma_{xy}$ & 						& $T_{CPU}$ \\
	\hline
	3		&	1760	& 5.522E-05 & {\small    }& 3.323E-05 & {\small    }& 4.781E-05 		& {\small    }& 3.835E-05 		& {\small    }& 1.919E-05 		& {\small    }& 153.3\\
	3		&	3960	& 1.079E-05 & {\small 4.0}& 5.544E-06 & {\small 4.4}& 6.534E-06 		& {\small 4.9}& 6.313E-06 		& {\small 4.4}& 3.250E-06 		& {\small 4.4}& 450.5\\
	3		&	7040	& 3.414E-06 & {\small 4.0}& 1.677E-06 & {\small 4.2}& 1.824E-06 		& {\small 4.4}& 1.906E-06 		& {\small 4.2}& 9.790E-07 		& {\small 4.2}& 998.0\\
	3		&	11000	& 1.396E-06 & {\small 4.0}& 6.827E-07 & {\small 4.0}& 7.183E-07 		& {\small 4.2}& 7.668E-07 		& {\small 4.1}& 3.983E-07 		& {\small 4.0}& 1811.5\\
	\hline
\end{tabular}
\begin{tabular}{cccccccccccccc}
	$p$ & $N_i$ 	& $u$ 			& 						& $v$ 			& 						& $\sigma_{xx}$ & 						& $\sigma_{yy}$ & 						& $\sigma_{xy}$ & 						& $T_{CPU}$ \\
	\hline
	4		&	1760	& 2.480E-06 & {\small    }& 1.216E-06 & {\small    }& 1.400E-06 		& {\small    }& 1.434E-06 		& {\small    }& 6.596E-07 		& {\small    }&183.0\\
	4		&	3960	& 3.270E-07 & {\small 5.0}& 1.582E-07 & {\small 5.0}& 1.820E-07 		& {\small 5.0}& 1.869E-07 		& {\small 5.0}& 8.319E-08 		& {\small 5.1}&984.6\\
	4		&	7040	& 7.724E-08 & {\small 5.0}& 3.733E-08 & {\small 5.0}& 4.292E-08 		& {\small 5.0}& 4.418E-08 		& {\small 5.0}& 1.933E-08 		& {\small 5.1}&2476.2\\
	4		&	11000	& 2.532E-08 & {\small 5.0}& 1.218E-08 & {\small 5.0}& 1.402E-08 		& {\small 5.0}& 1.442E-08 		& {\small 5.0}& 6.278E-09 		& {\small 5.0}&9466.8\\
	\hline
\end{tabular}
\caption{Numerical convergence test: $L_2$ error norm, numerical convergence rates and CPU time $T_{CPU}$ for all variables for $p=p_\gamma=1 \ldots 4$.}
\label{tab:NT1}
\end{table*}

\subsection{$2D$ tilted Lamb problem}
\label{NS:2DLamb}
In this test case we study the two dimensional tilted Lamb problem, as suggested in \cite{Komatitsch1998,gij1}. The computational domain $\Omega=\{(x,y)\in \R^+ \, | \, 0 \leq x \leq 4000 \,\, , \,\, 0 \leq y \leq 2000+ x \tan \theta \}$ consists in a free surface with a tilt angle of $\theta=10^{\circ}$. The chosen $p-$ and $s-$wave velocities are set to $c_p=3200$ and $c_s=1847.5$, respectively. The mass density is taken as $\rho=2200$ so that the resulting Lam\'e constants are $\lambda=7.5096725\cdot 10^{9}$ and $\mu=7.50916375\cdot 10^{9}$. The initial condition is $\mathbf{U}=0$ everywhere in $\Omega$. The waves are generated by a  directional point source located in $\xx_s=(1720.0,2303.18)$. We place a receiver in $\xx_p=(2694.96, 2475.08)$, at a distance of 900 length units from the source. As reference solution we use the well  established ADER-DG method proposed in \cite{gij1,gij2,Dumbser2008} with $N=M=4$ and $\Ni=844560$. The numerical parameters of the new staggered space-time DG scheme are $p=4$, $p_\gamma=2$, $\Delta t=10^{-3}$ and $\Ni=33952$. The point source  
$$\mathbf{S}_v(\xx,t)=\frac{1}{\rho}\vec{\bm{d}} \, \delta(\xx-\xx_s)\mathcal{S}(t),$$ 
is characterized by a Dirac delta distribution in space located in $\xx_s$ and a temporal part, which is a Ricker wavelet defined as
\begin{eqnarray}
	\mathcal{S}(t)=a_1\left( 0.5+a_2(t-t_D)^2 \right),
\label{eq:NTLamb2D_1}
\end{eqnarray}
where $t_D=0.08s$ is the source delay time; $a_1=-2000$; $a_2=-(\pi f_c)^2$; and $f_c=14.5$.
Finally the vector $\vec{\bm{d}}=(-\sin\theta, \cos\theta ,0,0,0)^\top$ determines the direction of the source and depends on the tilt angle $\theta$. 
A comparison of the velocity component $v$ at $t=0.6$ is reported in Figure \ref{fig.NT3.3}. 
Figure \ref{fig.NT3.4} shows the comparison of the recorded seismograms in the receiver location $\xx_p$. An excellent agreement with the reference solution can be observed 
also in this case. 
\begin{figure*}%
\includegraphics[width=0.49\columnwidth]{./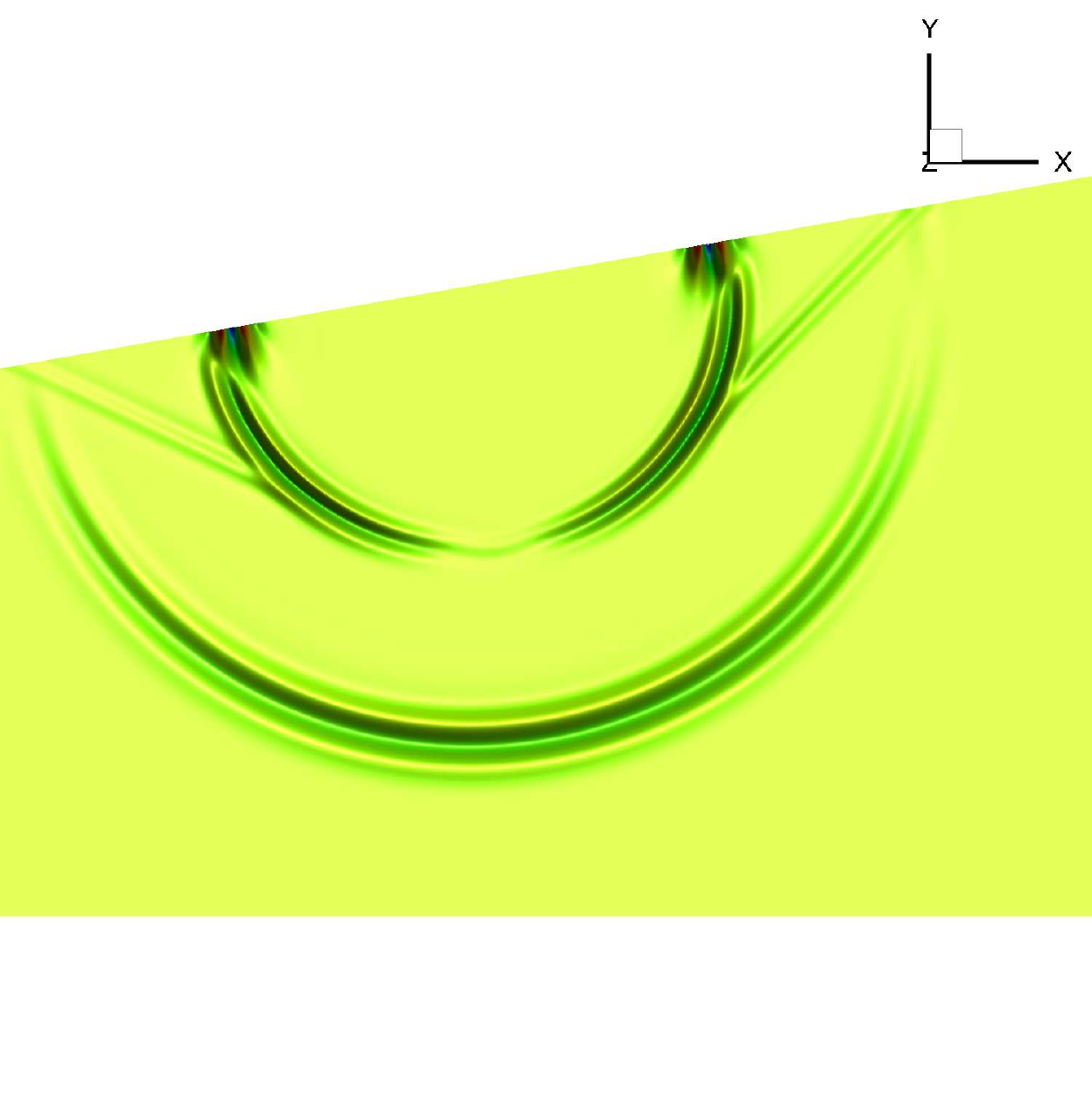} 
\includegraphics[width=0.49\columnwidth]{./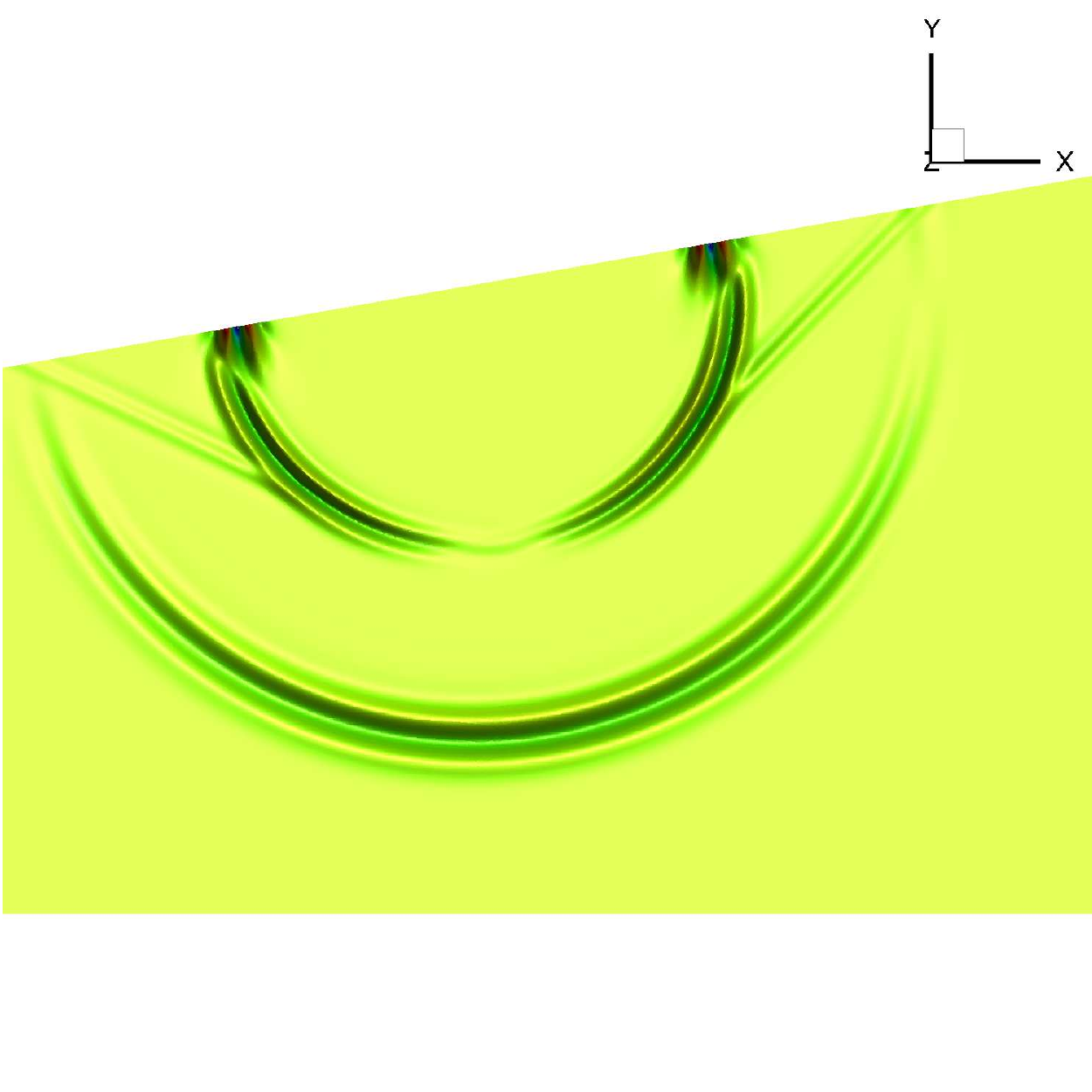} \\
\caption{Contours of the velocity component $v$ at time $t=0.6$ obtained with an explicit ADER-DG scheme (left) as reference and the new staggered space-time DG scheme (right).}%
\label{fig.NT3.3}%
\end{figure*}
\begin{figure*}%
\includegraphics[width=0.49\columnwidth]{./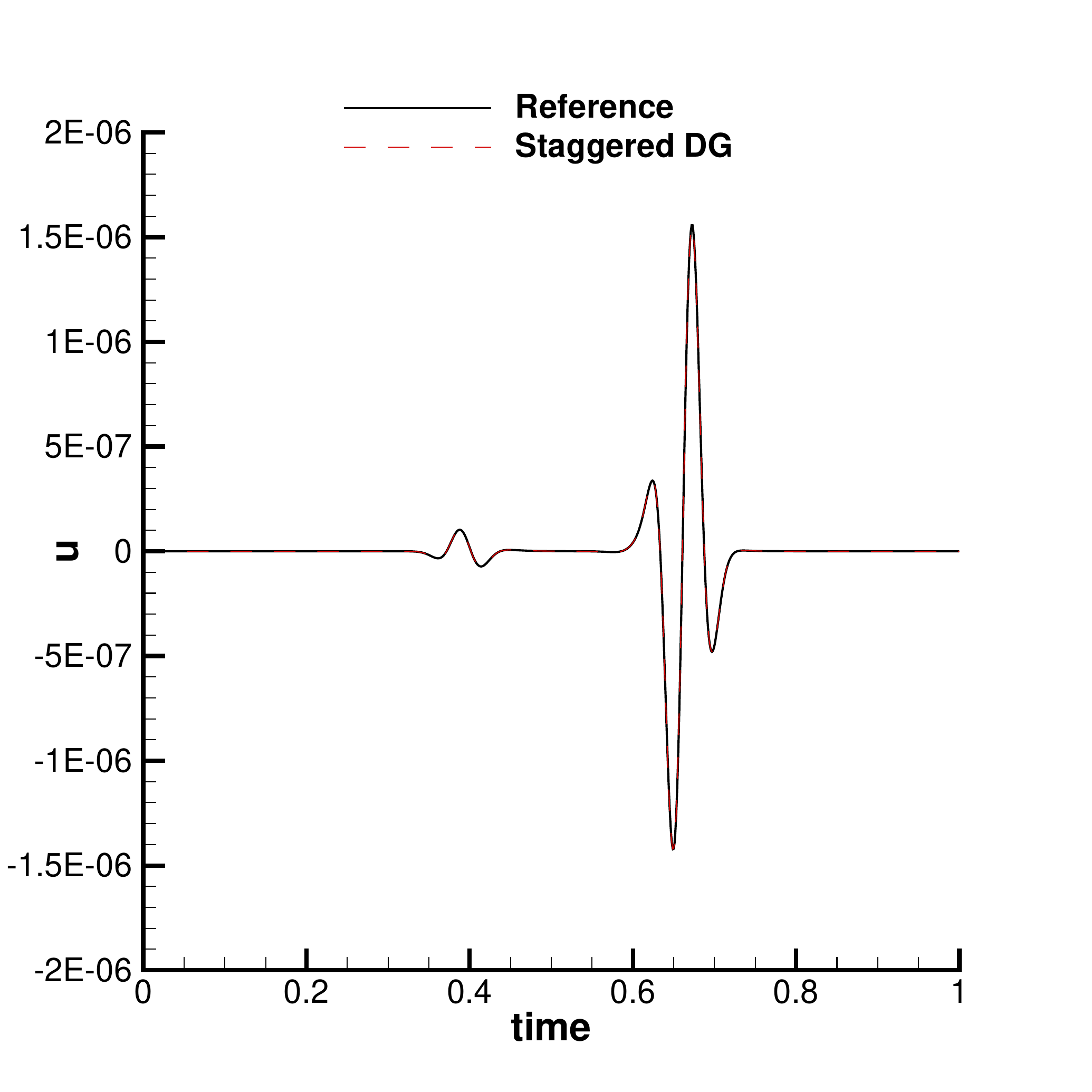} 
\includegraphics[width=0.49\columnwidth]{./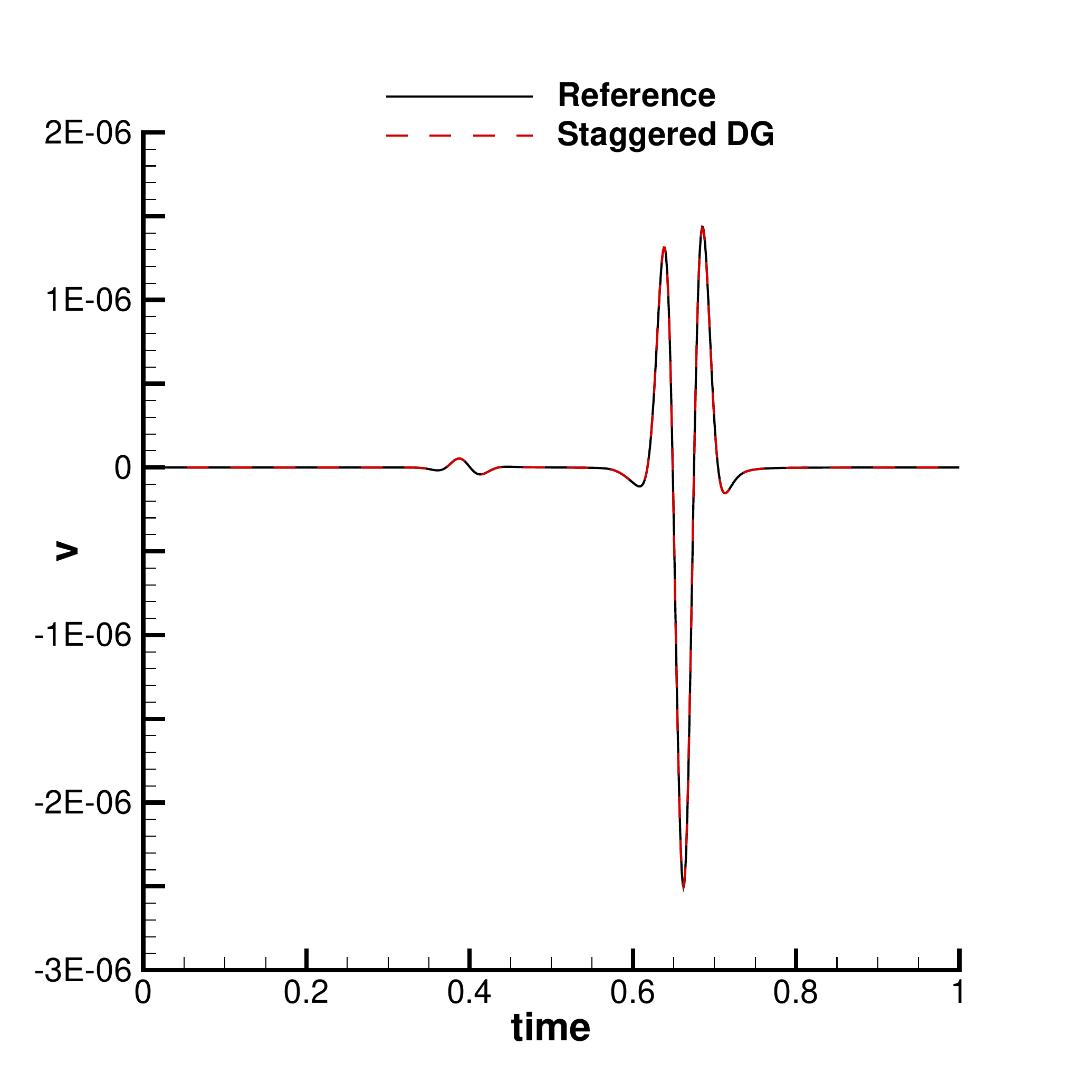} \\
\caption{Comparison of the numerical solution obtained with the new staggered space-time DG scheme and the reference solution for the velocity components $u$ and $v$ in the 
receiver point $\xx=\xx_p$ up to $t=1.0$.}%
\label{fig.NT3.4}%
\end{figure*}

\subsection{Wave propagation in complex geometry} 
This test case is very similar to the previous tilted Lamb problem, but in a non-trivial domain and using a heterogeneous medium. The computational domain is $\Omega=\{(x,y) \,\, | \,\, x \in [0,4000] \,\, y \in [0,f(x)]\}$ where the location of the free surface boundary is defined by the function $f(x)=2000+100\left( \sin{(\frac{3}{200}x)}+ \sin{(\frac{2}{200}x)} \right)$.  
The material is heterogeneous and consists in two layers with different material properties. The first layer is placed in $\{y>1500-\frac{x}{2}\}$ with $c_p=3200$ and $c_s=1847.5$, 
while the second layer covers the rest of the domain with $c_p=2262.74$ and $c_s=1306.38$. We use free surface boundary conditions everywhere. The same point source as  
described in the previous Section \ref{NS:2DLamb} is used (with $\theta=10^\circ$ as before) and is located in $\xx_s=(3000,1500.18)$. We place three seismogram recorders 
in $\xx_1=(893.80,1994.83)$, $\xx_2=(1790.0,880.0)$ and $\xx_3=(1000.0,500.0)$. The computational domain, the position of the source point and the position of the receivers 
are depicted in Figure \ref{fig.NT4.1}. 
\begin{figure}%
\includegraphics[width=1.0\columnwidth]{./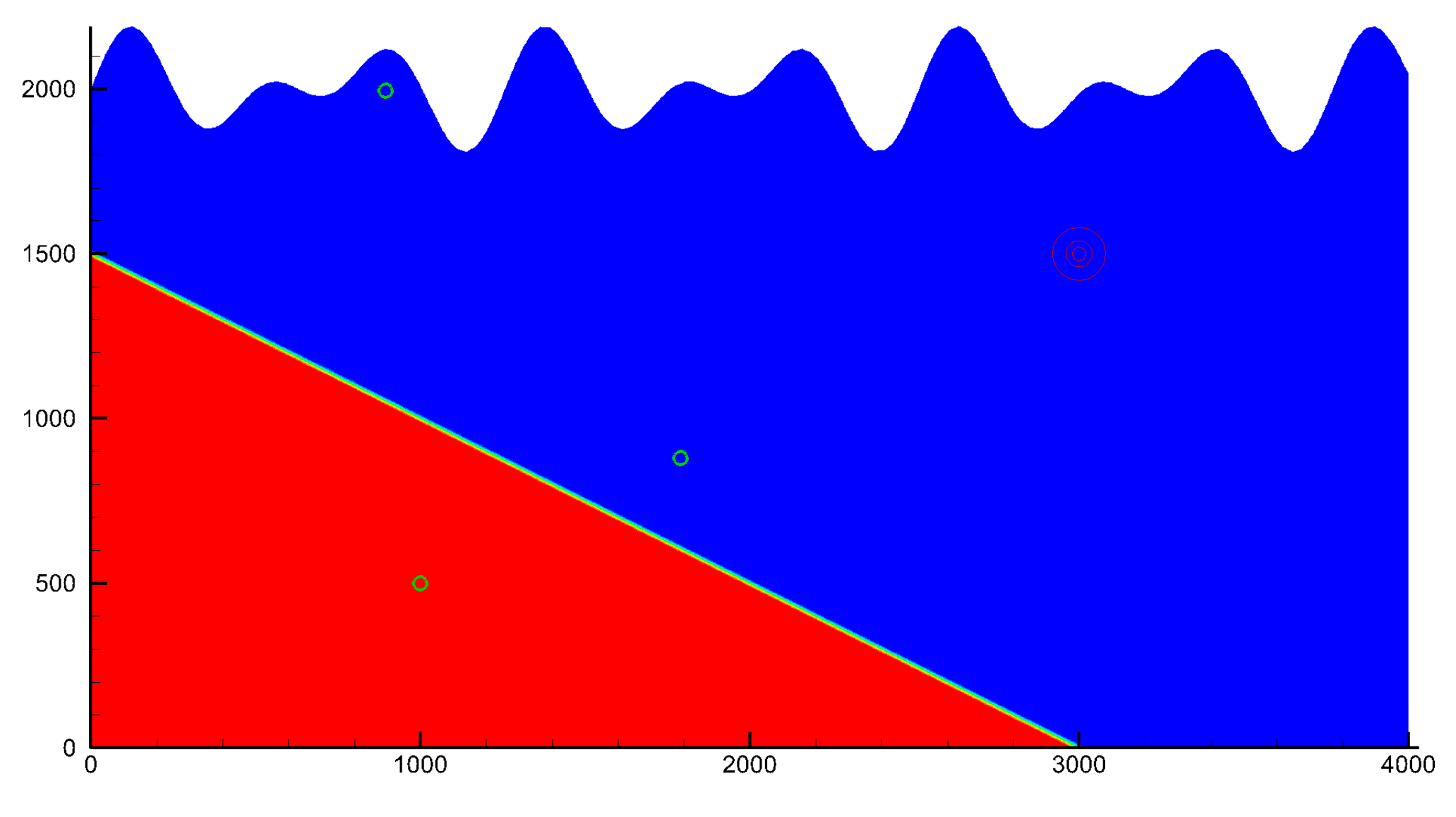} 
\caption{Wave propagation in complex geometry. Computational domain with the point source highlighted in red and the receivers in green.}%
\label{fig.NT4.1}%
\end{figure}
The computational domain is discretized using only $N_i=7352$ triangles of characteristic mesh spacing {$h=58.50$} and the polynomial approximation degrees are chosen as $p=4$ in space and $p_\gamma=2$ in time. We run the simulation up to $t=5$ and we set $\Delta t=10^{-3}$. We compare our numerical solution again with the well established ADER-DG method proposed in \cite{gij1,gij2,Dumbser2008} with $N=M=4$ on the same spatial mesh. A comparison of the numerical solution with the reference solution is reported at several times in Figure \ref{fig.NT4.2}, while the time  series of the velocity component $v$ in the three receiver points is reported in Figure \ref{fig.NT4.3}. In all cases we can observe a very good agreement with the reference solution. 
\begin{figure*}%
\includegraphics[width=0.49\columnwidth]{./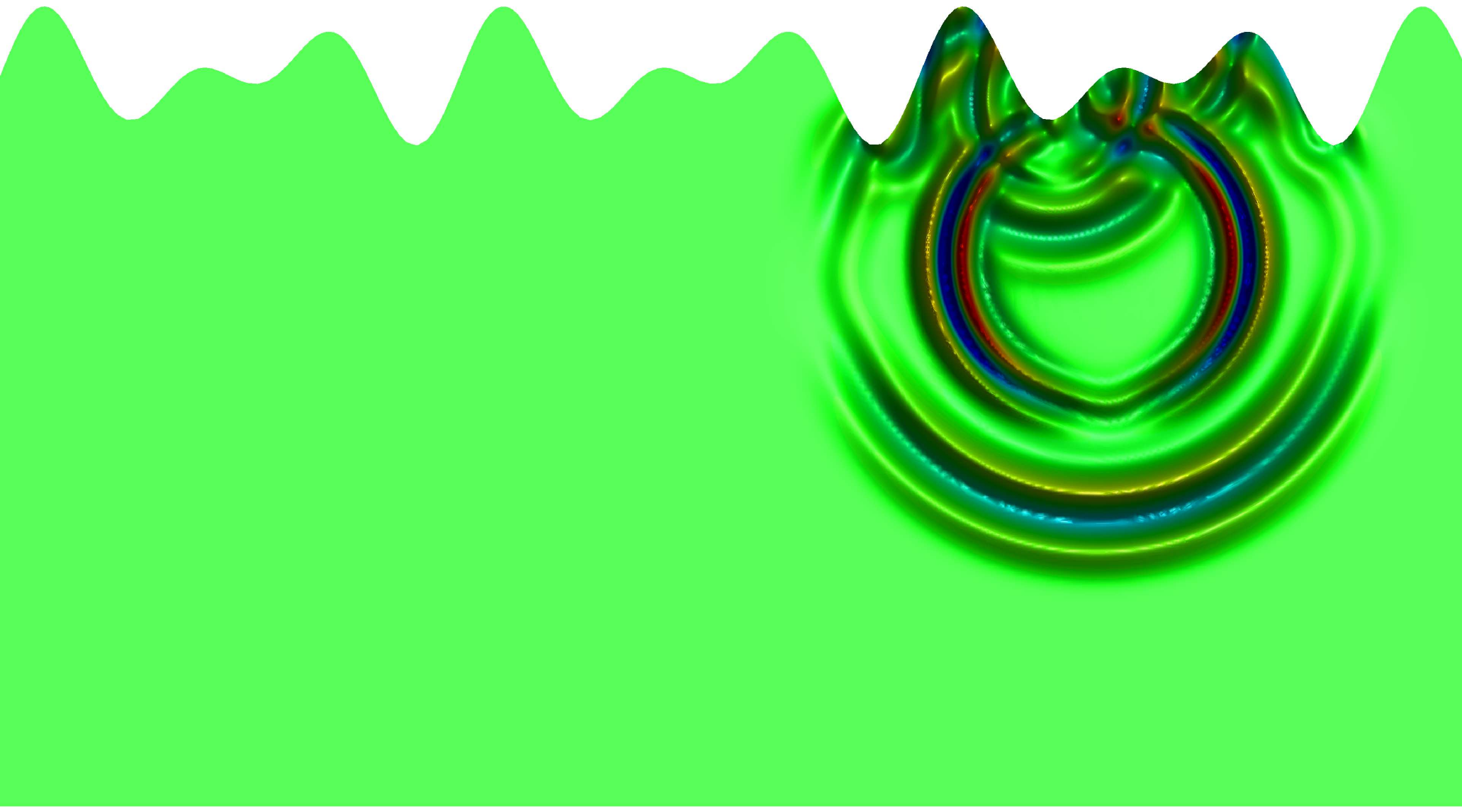} 
\includegraphics[width=0.49\columnwidth]{./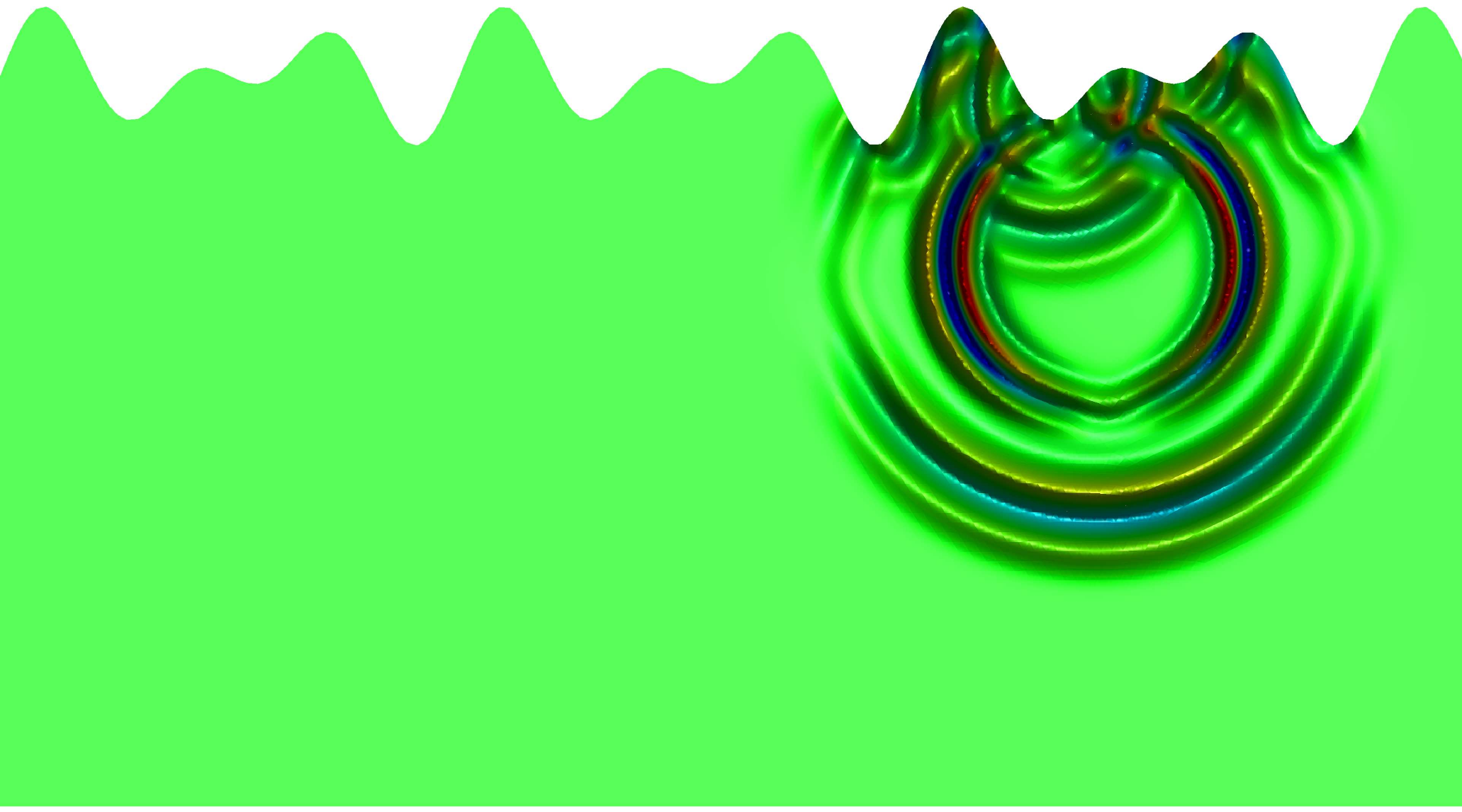} \\
\includegraphics[width=0.49\columnwidth]{./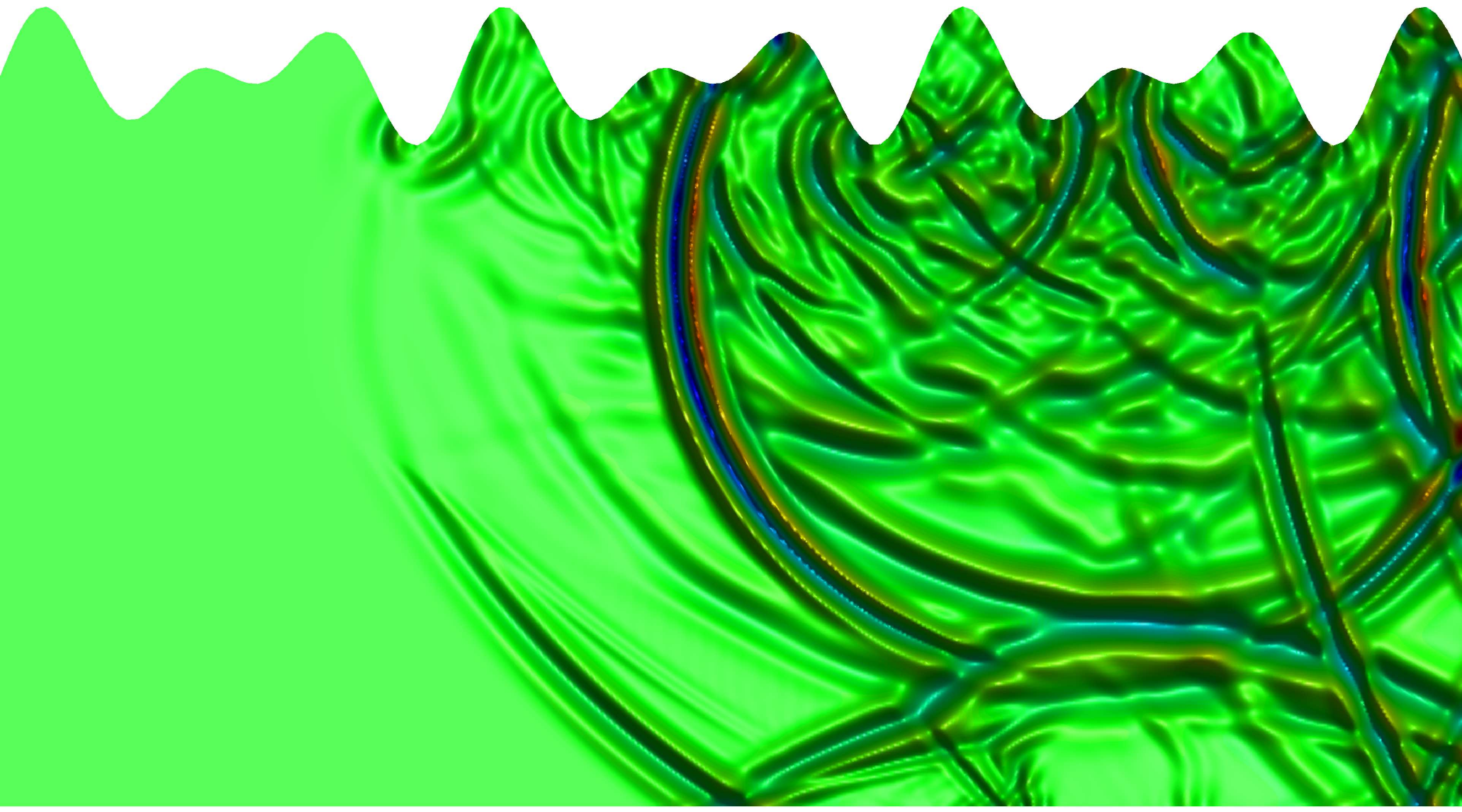} 
\includegraphics[width=0.49\columnwidth]{./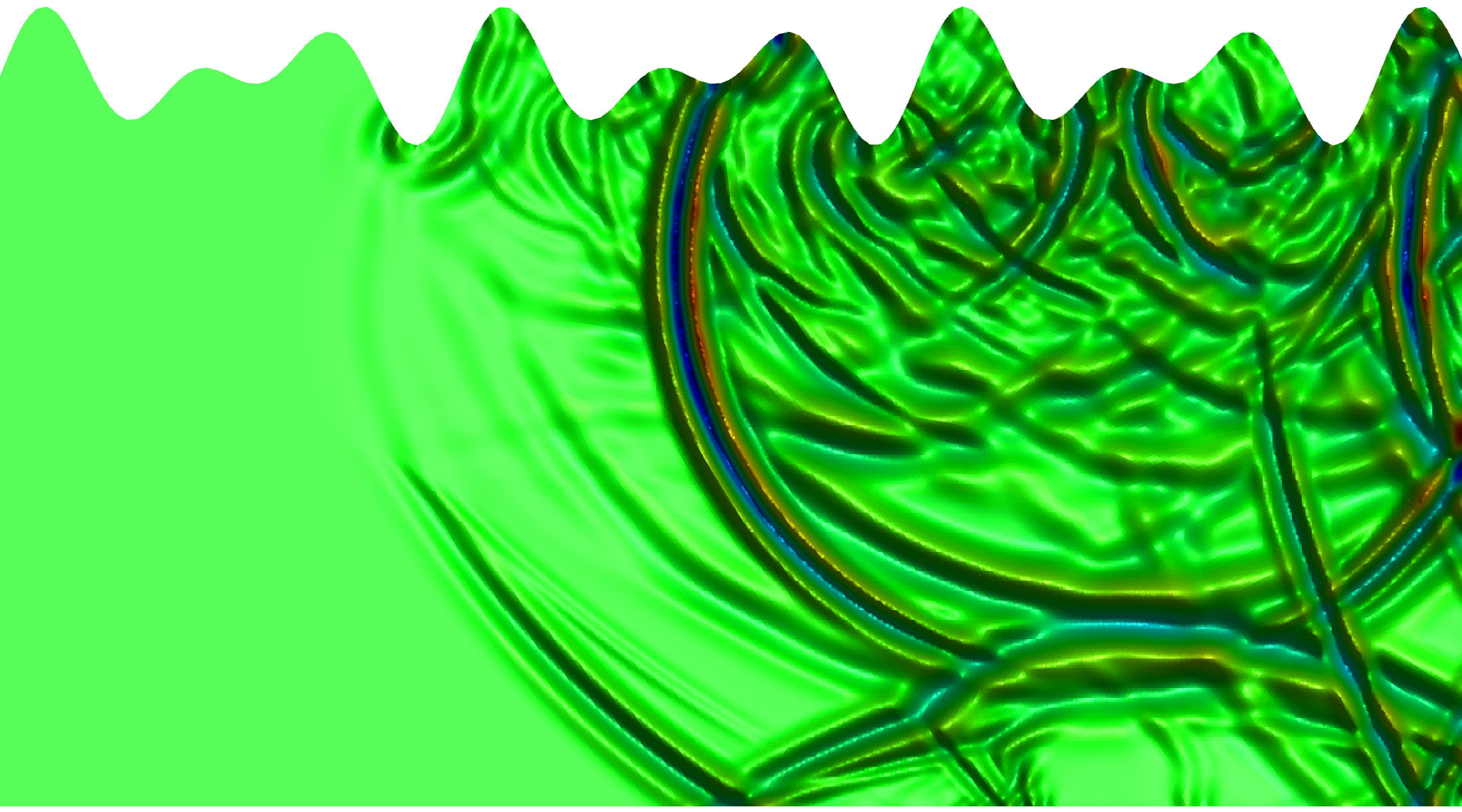} \\
\includegraphics[width=0.49\columnwidth]{./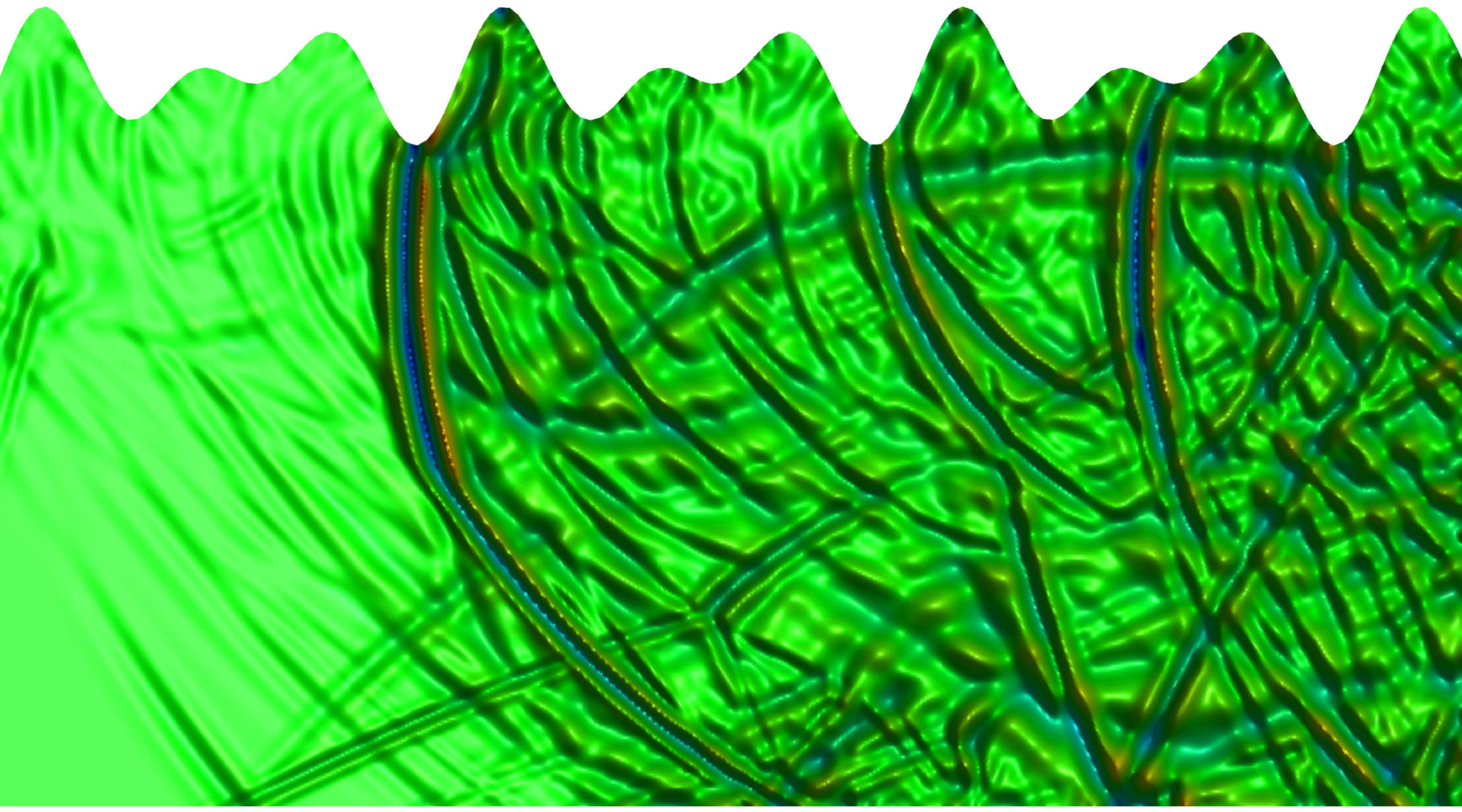} 
\includegraphics[width=0.49\columnwidth]{./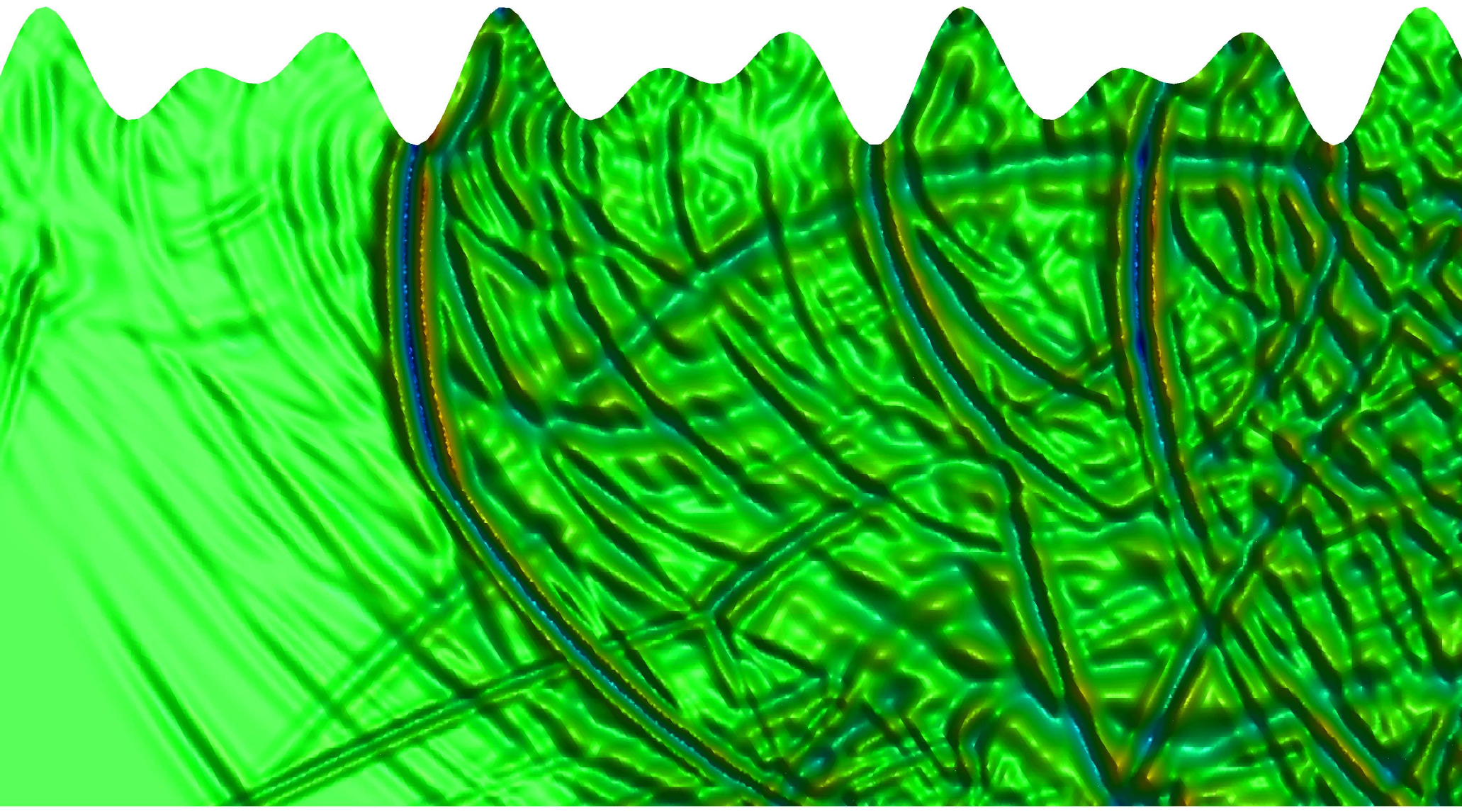} \\
\caption{Wave propagation in complex geometry. Isocontours of the vertical velocity $v$ at times $t=0.3, 0.7, 1.1$ from top to bottom for the the reference solution (left) and the 
new staggered space-time DG scheme (right).}%
\label{fig.NT4.2}%
\end{figure*}
\begin{figure*}%
\includegraphics[width=0.9\columnwidth]{./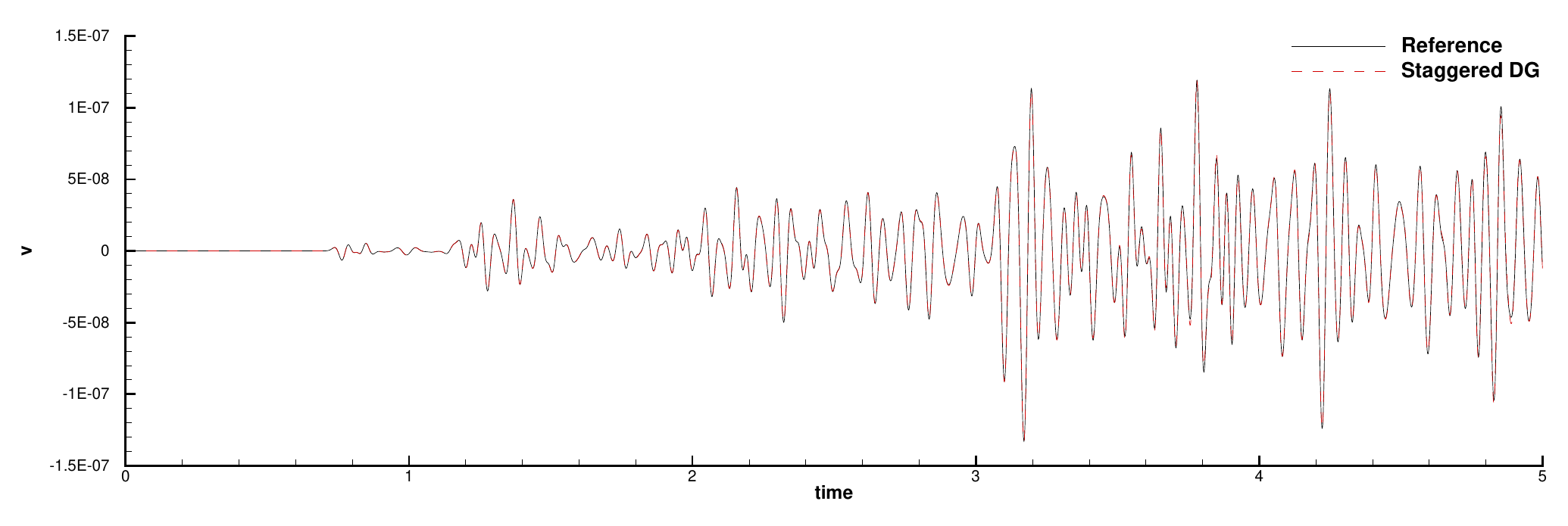} \\ 
\includegraphics[width=0.9\columnwidth]{./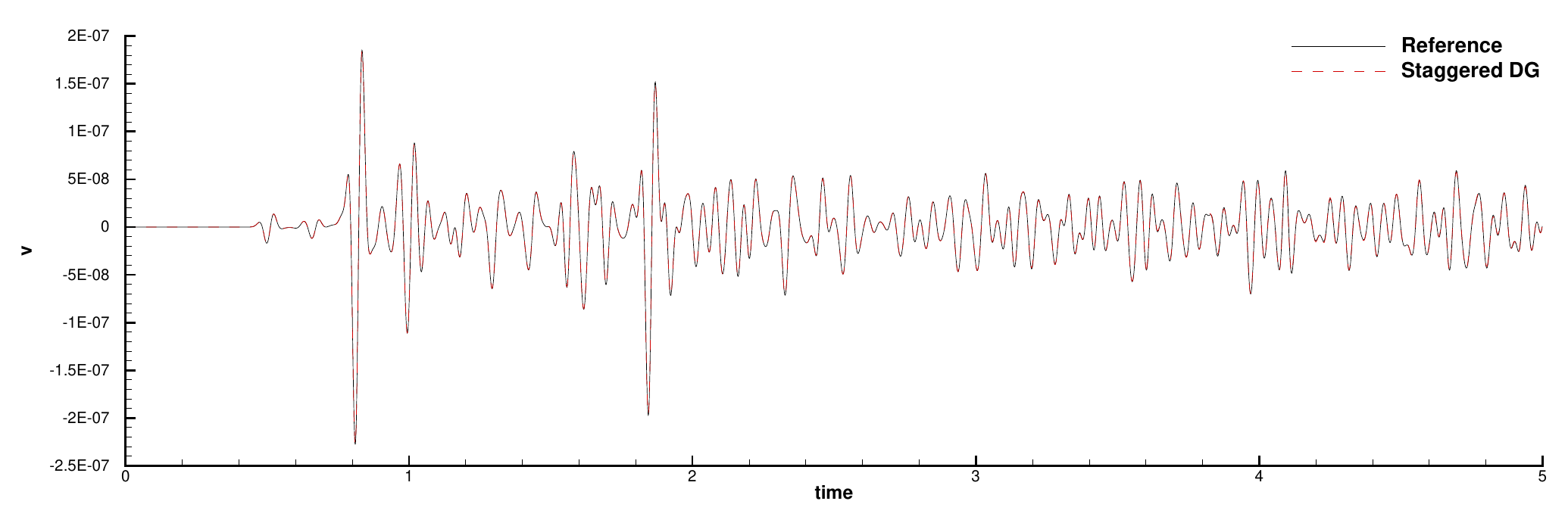} \\
\includegraphics[width=0.9\columnwidth]{./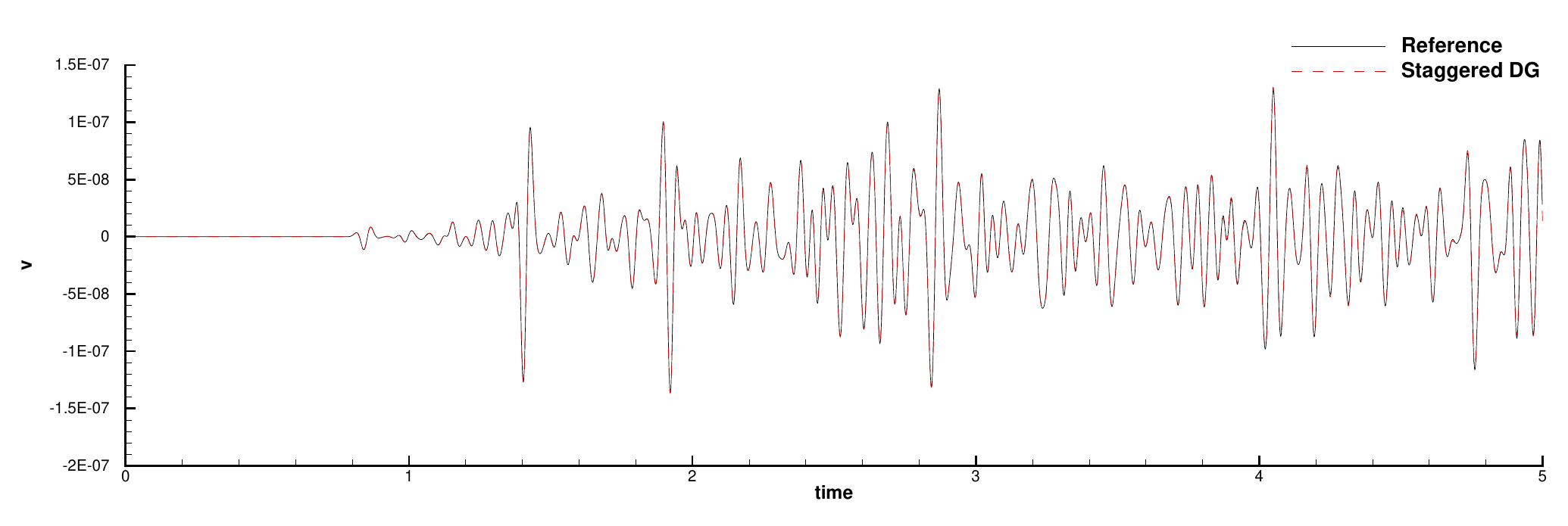} 
\caption{Comparison between numerical and reference time series for the velocity component $v$ in the three receivers $1\ldots3$ from top to bottom. }%
\label{fig.NT4.3}%
\end{figure*}

\subsection{Sliver element problem} 
Since in unstructured meshes for complex geometries or Cartesian cut cell approaches one can easily generate so-called \textit{sliver elements}, we want to test our new approach in the case 
where we have sliver elements in the computational domain, see \cite{gij5} for a similar study in the context of explicit ADER-DG schemes with time-accurate local time stepping (LTS). 
We will compare the number of iterations needed to solve the linear system in the case of a regular unstructured grid and the mesh containing the sliver elements. 
Since the resulting matrix for the velocity field becomes locally ill-conditioned, we will use here a couple of preconditioners in order to control the number of iterations. 
The simplest one (Pre1) consists in inverting only the diagonal block of the system matrix, while the second one (Pre2) requires to invert a local system composed of the element 
and its direct face neighbors. More details about the construction of those preconditioners are reported in \ref{App_pre}. We consider a computational domain 
$\Omega=[-1.5,1.5]^2$ covered with an almost uniform grid (mesh 1) and the same grid with two strongly deformed sliver elements (mesh 2), see Figure \ref{fig.NT5.1}. The incircle 
radius corresponding to the sliver elements in mesh 2 is reduced by a factor of $70.53$ with respect to mesh 1. We use the same setup as presented in Section \ref{sec.convtest}  
for a simple $p$-wave traveling in direction $\vec{n}=(1,0)$ and we use $(p,p_\gamma)=(4,2)$ with a time step size of {$\Delta t = 0.014$ for both meshes. This is
possible since our staggered space-time DG scheme is \textit{unconditionally stable}}. Figure \ref{fig.NT5.2} shows the numerical solutions obtained on the two different meshes. 
One can observe that the introduction of the sliver element in mesh 2 does not change the quality of the solution, but of course it changes the effort required to solve the 
linear system for the velocity. The mean number of iterations needed to solve the system is reported in Table \ref{tab:NT5.1}. The trend of the iterations in the different cases 
is shown in Figure \ref{fig.NT5.3}. As we can easily see, if we do not use any kind of preconditioner, the average number of iterations increases a lot. The use of the fully 
local preconditioner $1$ helps to reduce the number of iterations, while the second preconditioner is sufficient to solve this ill-conditioning problem and to keep the number of
iterations almost independent of the mesh.

\begin{figure*}%
\includegraphics[width=0.49\columnwidth]{./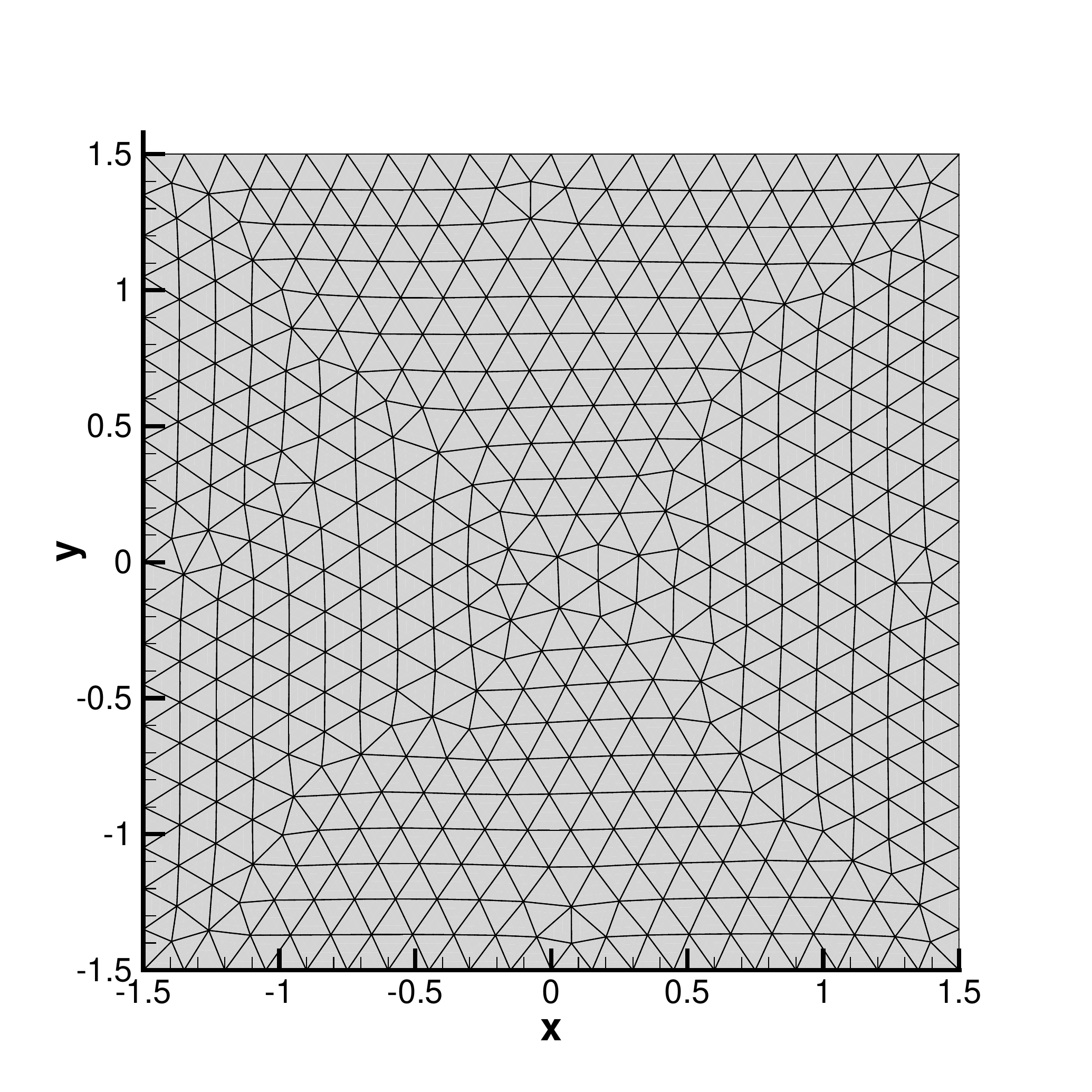} 
\includegraphics[width=0.49\columnwidth]{./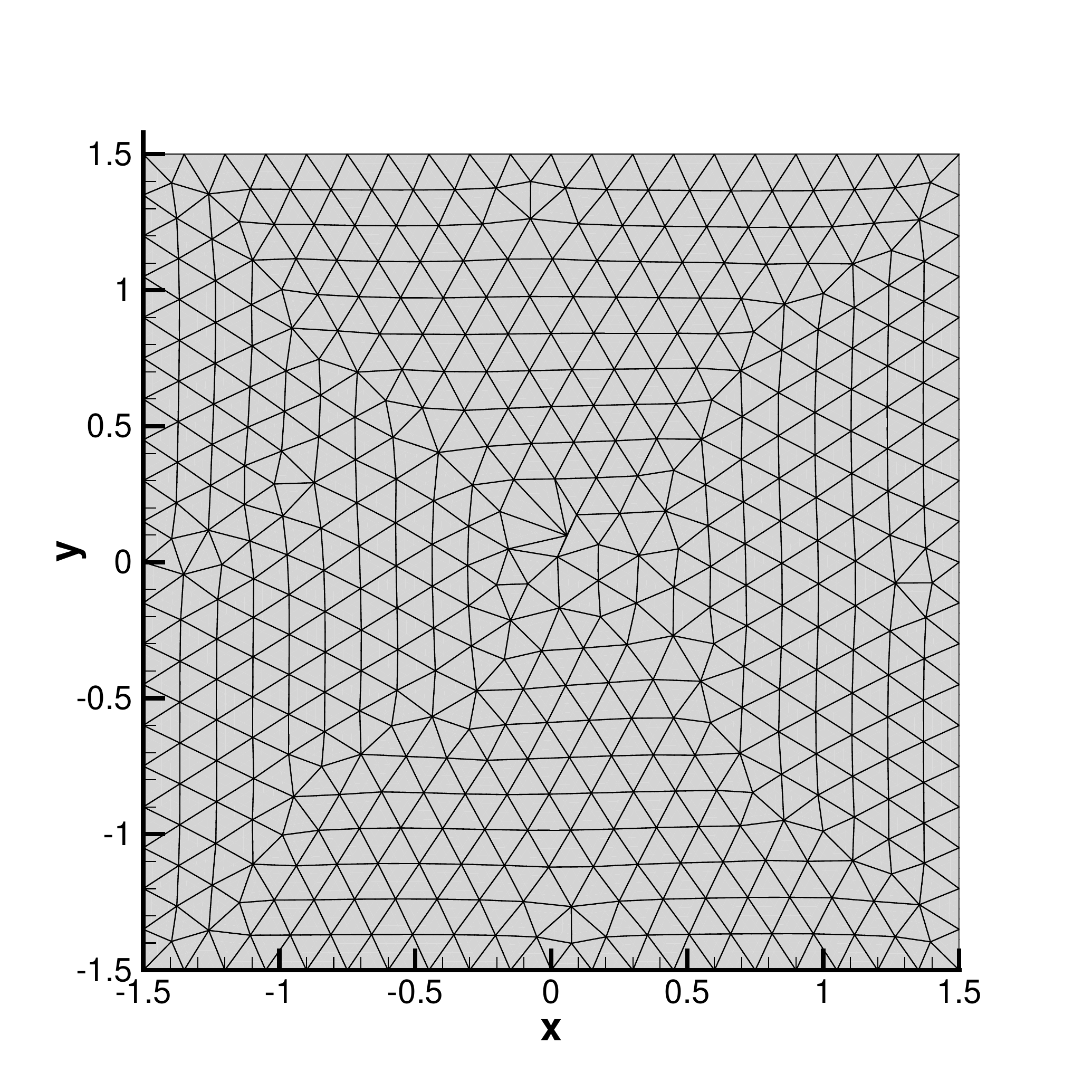}
\caption{Almost uniform mesh 1 (left) and mesh 2 containing two sliver elements (right).}%
\label{fig.NT5.1}%
\end{figure*}

\begin{figure*}%
\includegraphics[width=0.49\columnwidth]{./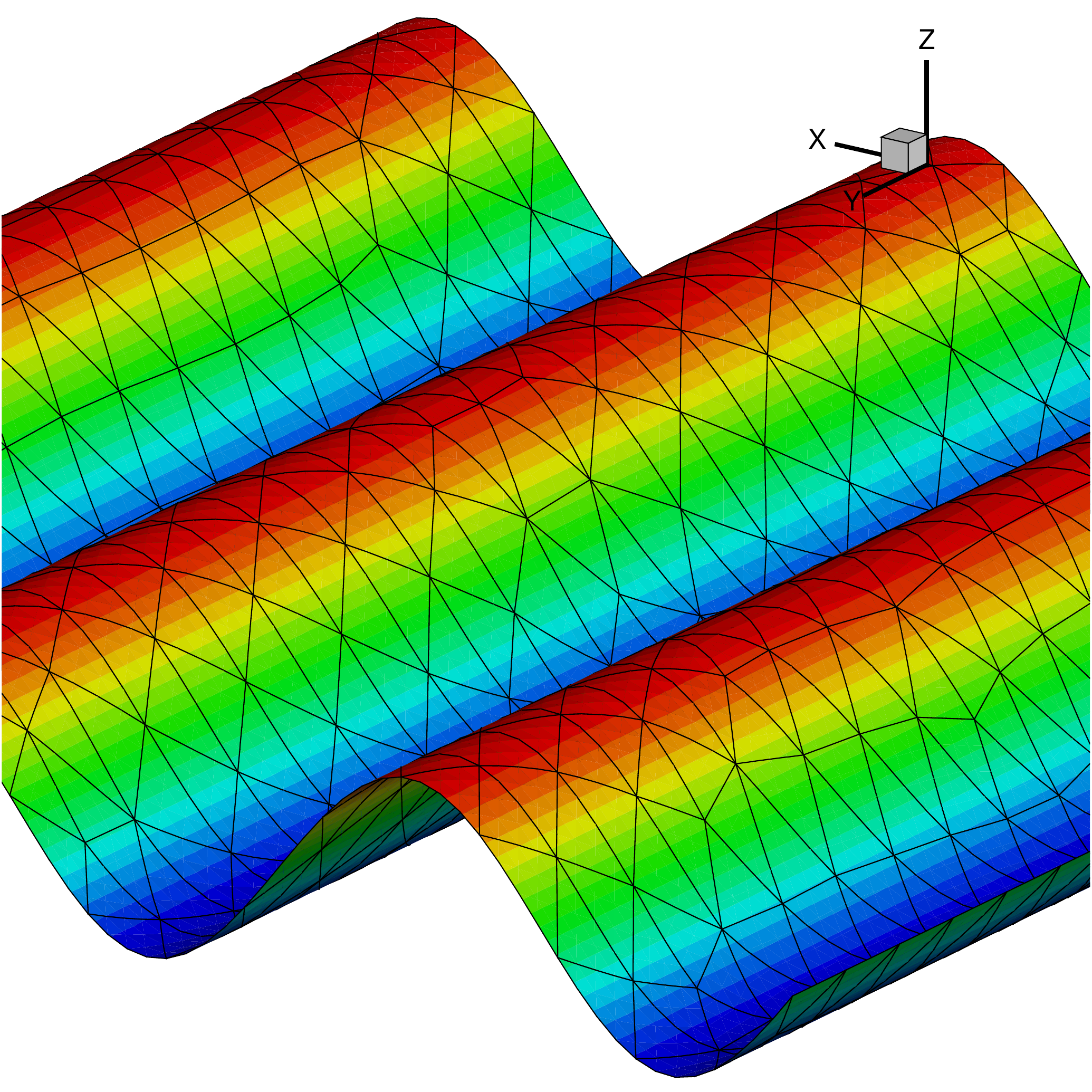} 
\includegraphics[width=0.49\columnwidth]{./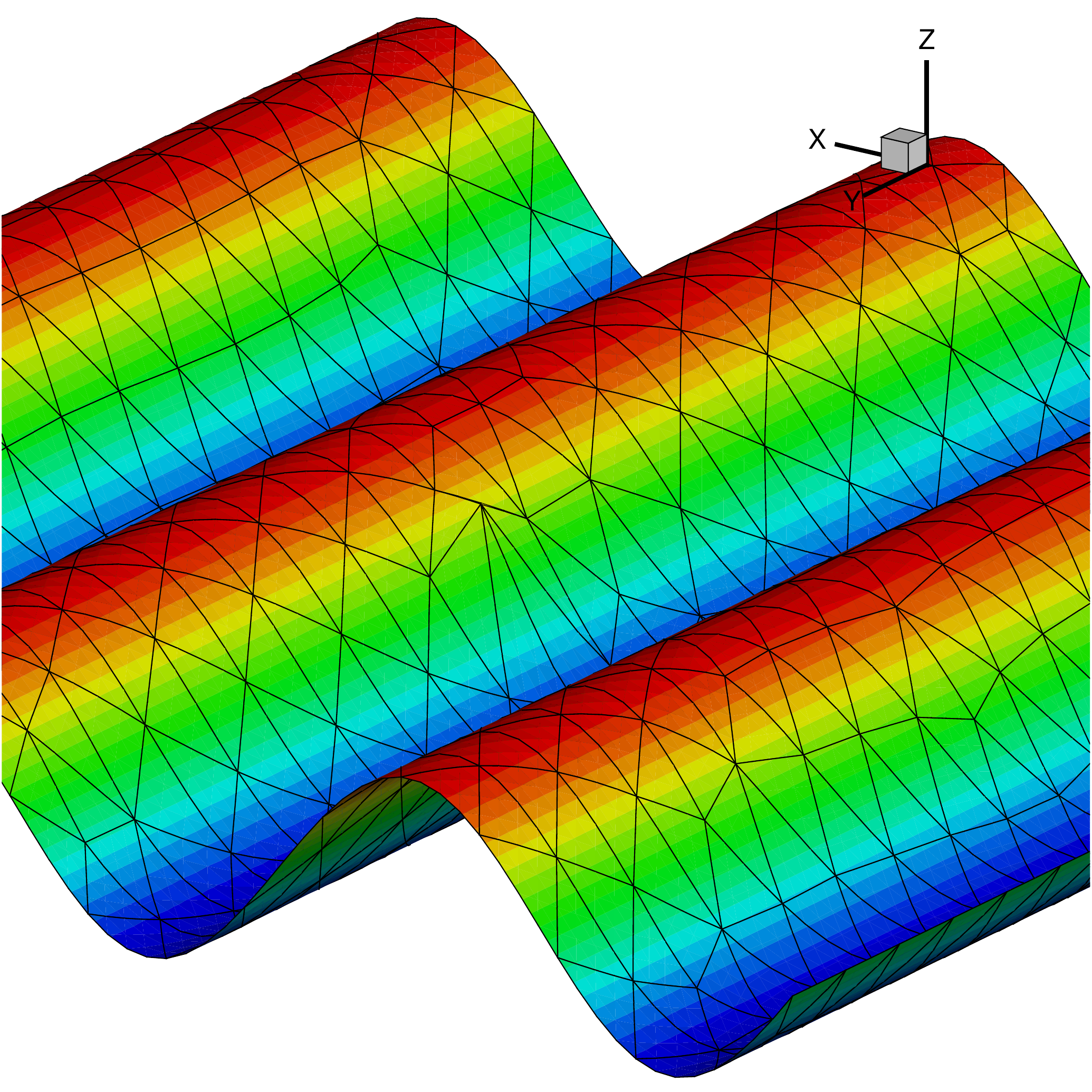}
\caption{Numerical results for the velocity component $u$ using the regular unstructured mesh 1 (left) and the unstructured mesh 2 containing the sliver elements (right). It can be clearly noted that also
on mesh 2 the solution is smooth and is not affected by the presence of the slivers. }%
\label{fig.NT5.2}%
\end{figure*}

\begin{table}%
\begin{center}
\begin{tabular}{cccc}
	Preconditioning & Iter. Mesh 1 	& Iter. Mesh 2 & Factor	 \\
	\hline
	None		&	112.59	&  611.95 & {\small 5.43} \\
	Pre 1		&	86.73	&  191.77 & {\small 2.21} \\
	Pre 2		&	53.27	&  53.38 & {\small 1.00} \\
	\hline
\end{tabular}
\end{center}
\caption{Number of average iterations needed for the GMRES algorithm with different preconditioners on the uniform unstructured grid (mesh 1) and the one containing the sliver elements (mesh 2).}
\label{tab:NT5.1}
\end{table}
\begin{figure*}%
\includegraphics[width=0.33\columnwidth]{./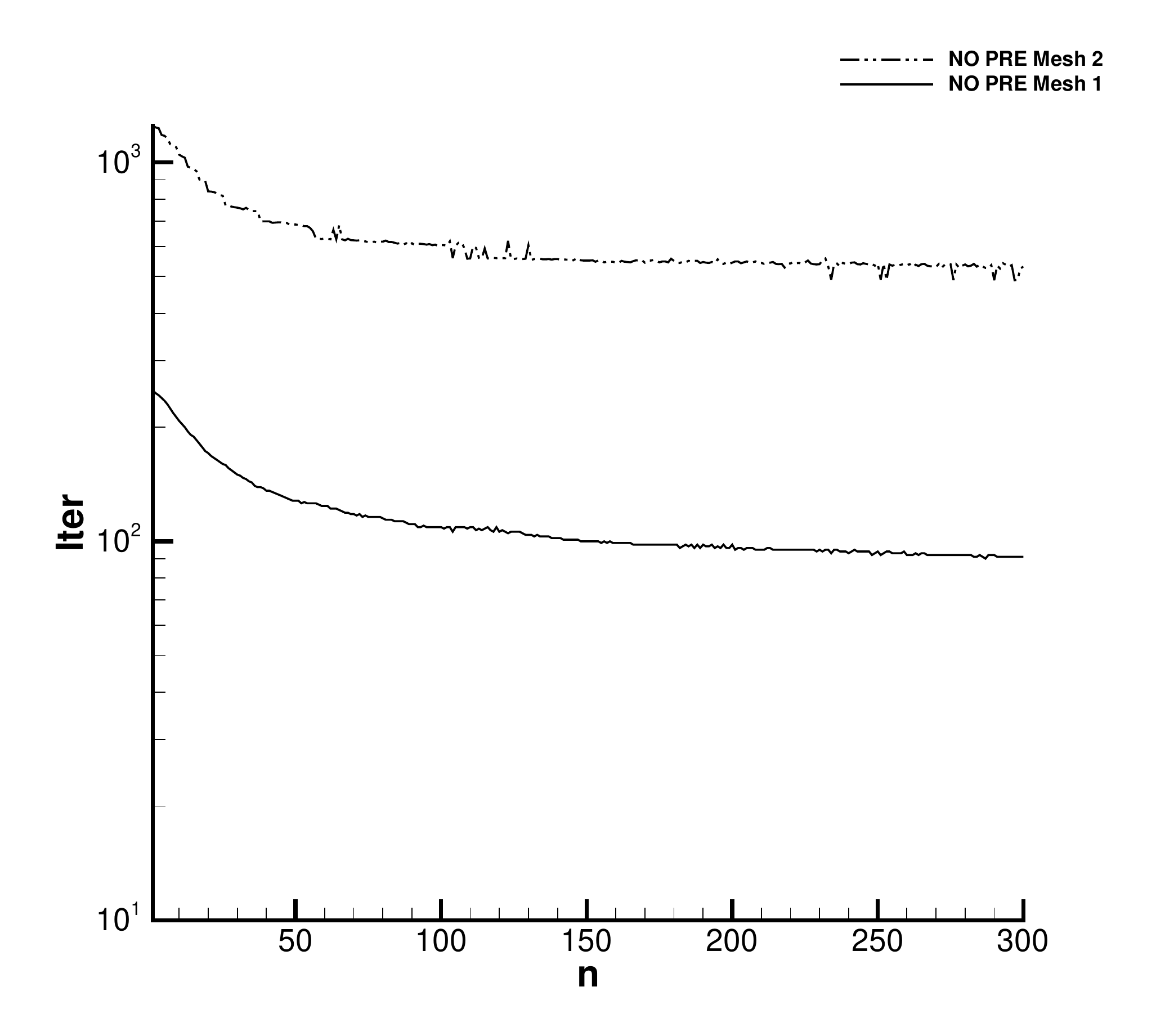} 
\includegraphics[width=0.33\columnwidth]{./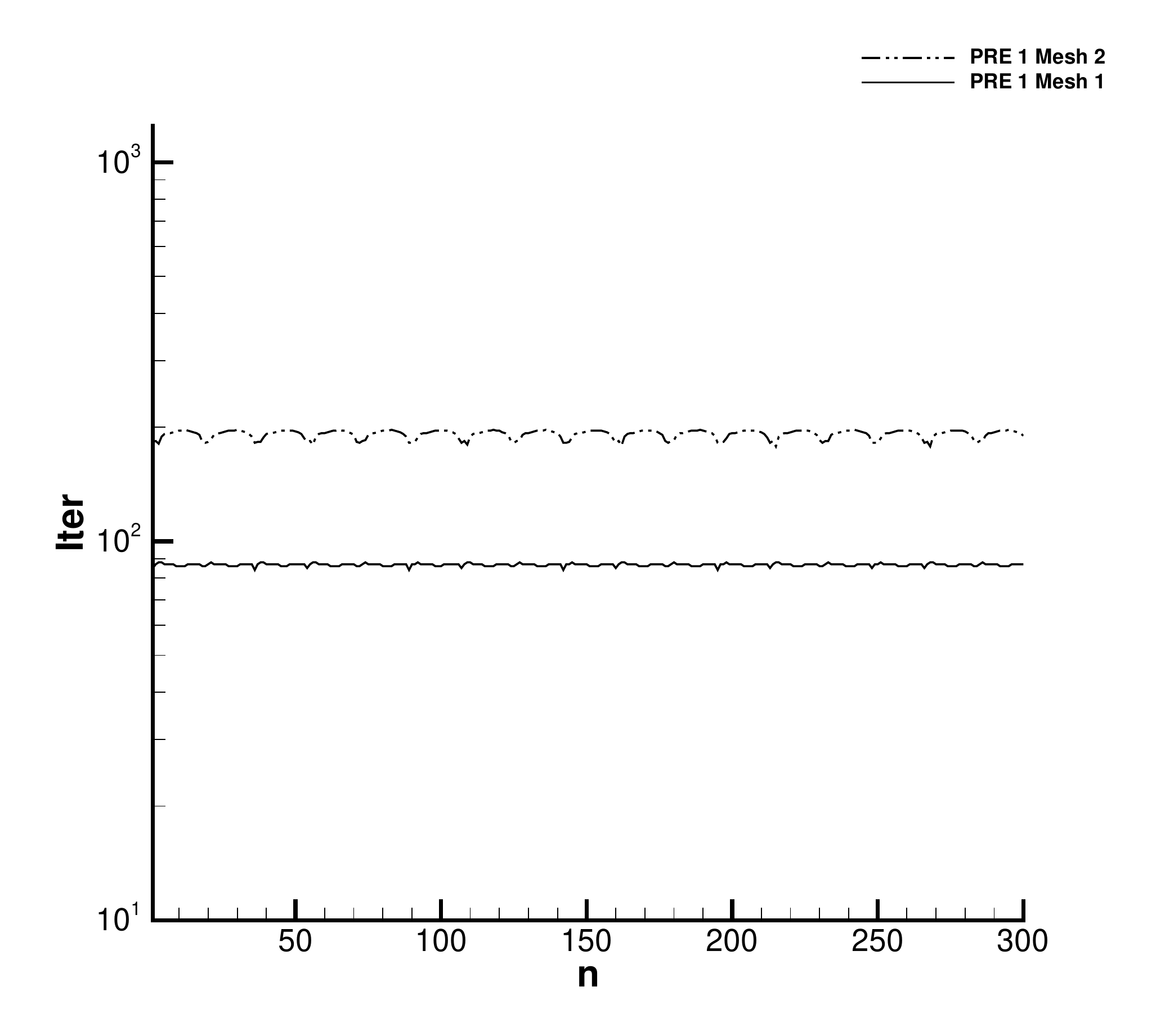}
\includegraphics[width=0.33\columnwidth]{./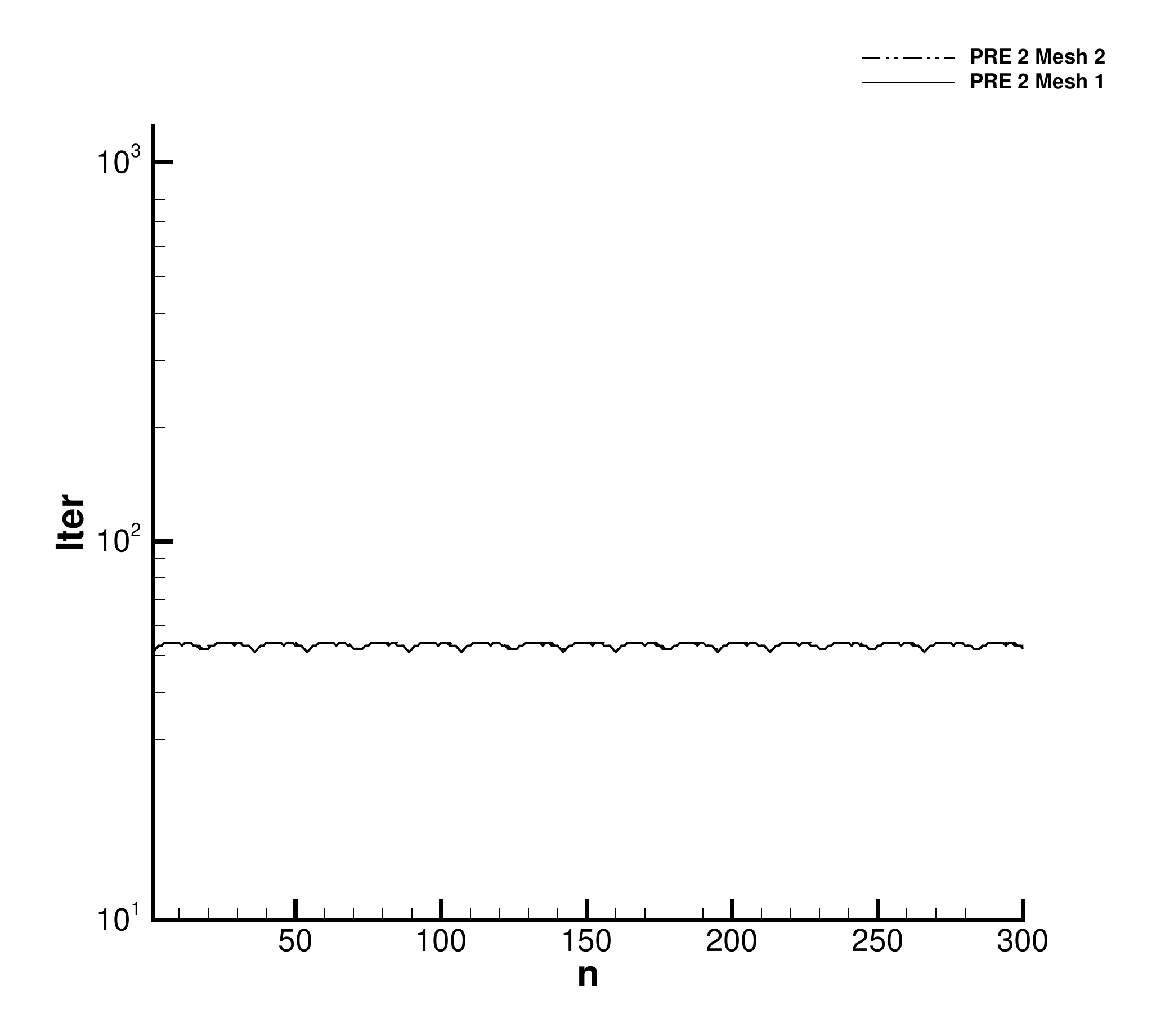}
\caption{Required number of iterations for the solution of the linear system plotted over the time step index $n$ in the case of no preconditioning (left), using the preconditioner 1 (center) and the preconditioner 2 (right).} 
\label{fig.NT5.3}%
\end{figure*}
\subsection{3D wave propagation} 
In this test case we want to check our numerical method in three space dimensions. We take a very simple material block of size $\Omega=[0,10000]\times [-8000,2000] \times [-5000,5000]$. We use a  homogeneous material with $c_p=3200$, $c_s=1847.5$ and $\rho=2200$. The resulting Lam\'e constants are $\lambda=7.51 \cdot 10^9$ and $\mu=7.51 \cdot 10^9$. The domain is covered with $\Ni=214893$  tetrahedral elements of average size {$h=388.55$}. For this test problem we use the particular case of the Crank-Nicolson time discretization ($p_\gamma=0$) and approximation degree $p=4$ 
in space. The wave is generated by an initial Gaussian profile imposed in the velocity component $w$ as  
\begin{eqnarray}
	w(\xx,0) = a e^{-r^2/R^2} 
\label{eq:NTCG3D_1a}
\end{eqnarray}
with $a=-10^{-2}$, $R=100$ and $r=|\xx-\mathbf{x}_0|$ is the distance from the center point $\xx_0=(5000,1900,0)$. All other state variables are initialized with zero. 
We place two receivers in $\Omega$, one close to the free surface at $\xx_1=(6000,1999,500)$ and the second one $500$ units below the free surface in $\xx_2=(6000,1500,500)$. 
A comparison of the velocity component $v$ obtained with the ADER-DG reference code \texttt{SeisSol}  and the new staggered DG scheme proposed in this paper is shown in Figure \ref{fig.NT3DI_1}, where 
we also show the location of the two receivers. {For the computation of the reference solution, we use the same computational mesh and the same order of accuracy, i.e. we use $N=M=4$ 
and $N_i=214893$.} In Figures \ref{fig.NT3DI_2} and \ref{fig.NT3DI_3} we present a comparison between of the time signal recorded in the two receivers with the two different schemes. We can observe 
a very good agreement between the ADER-DG reference solution and the numerical  solution obtained with the new staggered DG scheme. We can also observe that the stress components corresponding to the 
$y$ direction vanish at the free surface, as reported in Figure \ref{fig.NT3DI_3}. 
\begin{figure*}%
\includegraphics[width=0.49\columnwidth]{./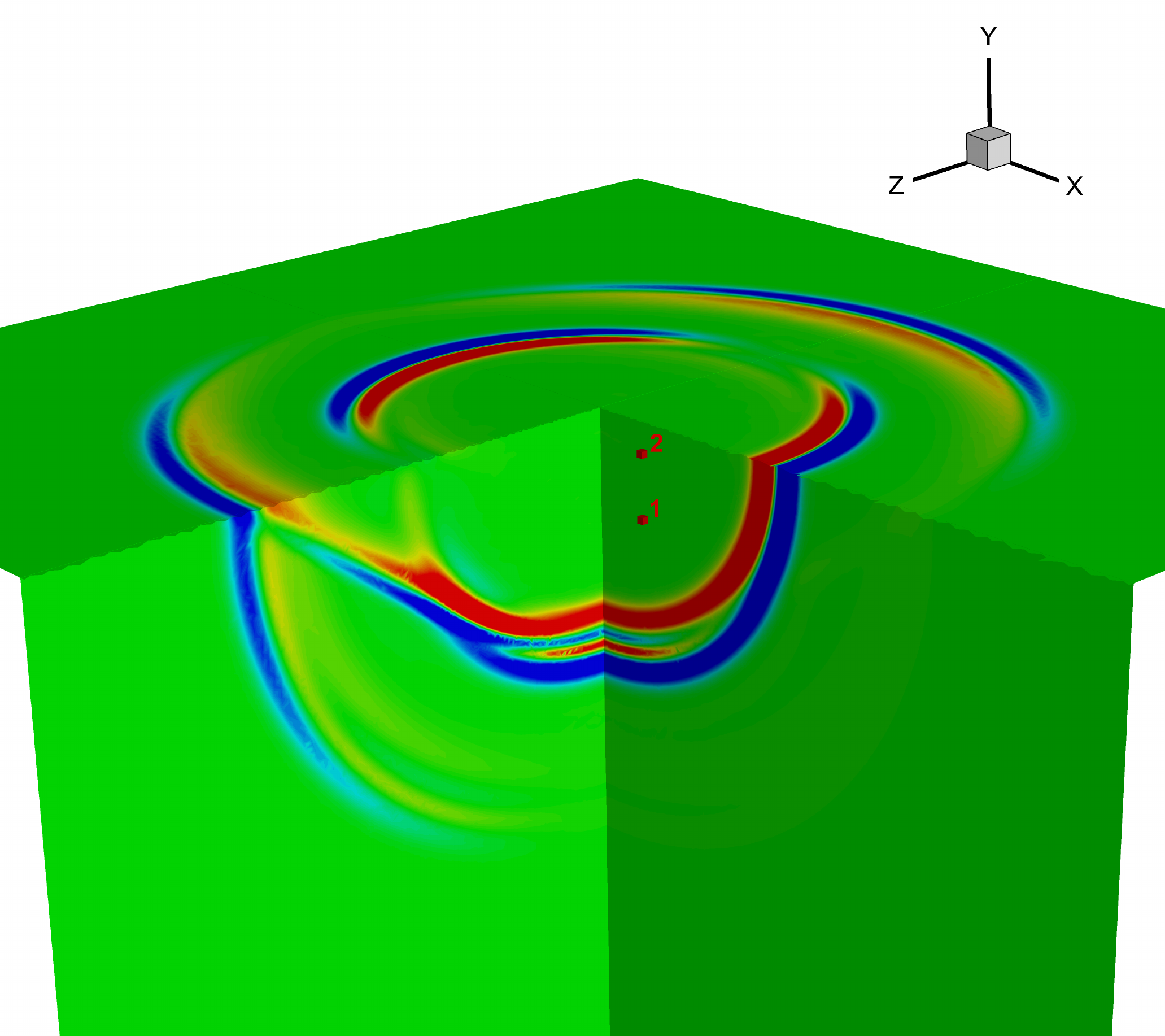} 
\includegraphics[width=0.49\columnwidth]{./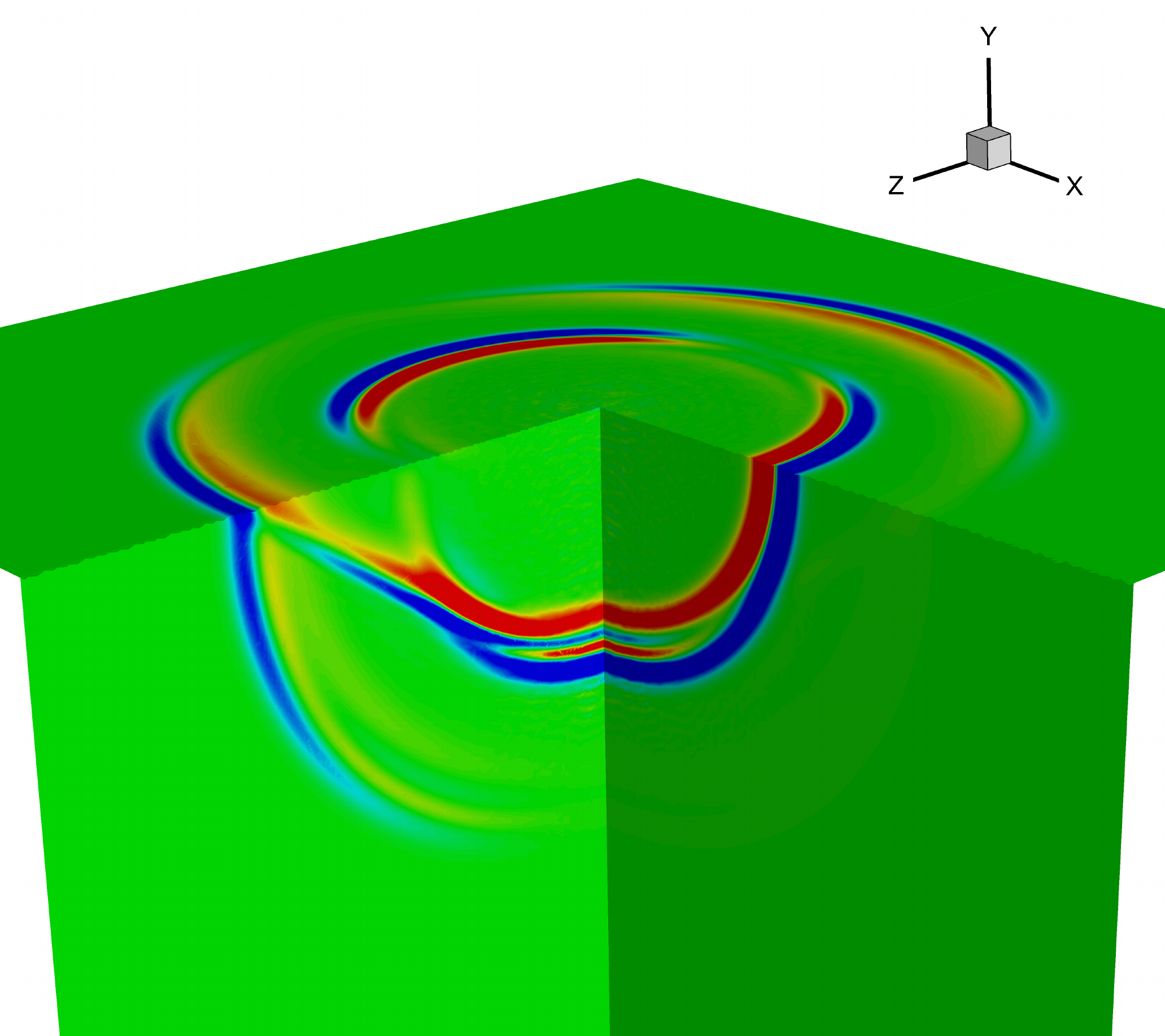}  
\caption{Simple 3D wave propagation problem. Numerical solution obtained for the {velocity component $w$ at time $t=1.0$} using an explicit ADER-DG reference scheme (left) and the new implicit staggered DG approach presented in this paper (right).}%
\label{fig.NT3DI_1}%
\end{figure*}
\begin{figure*}%
\includegraphics[width=0.33\columnwidth]{./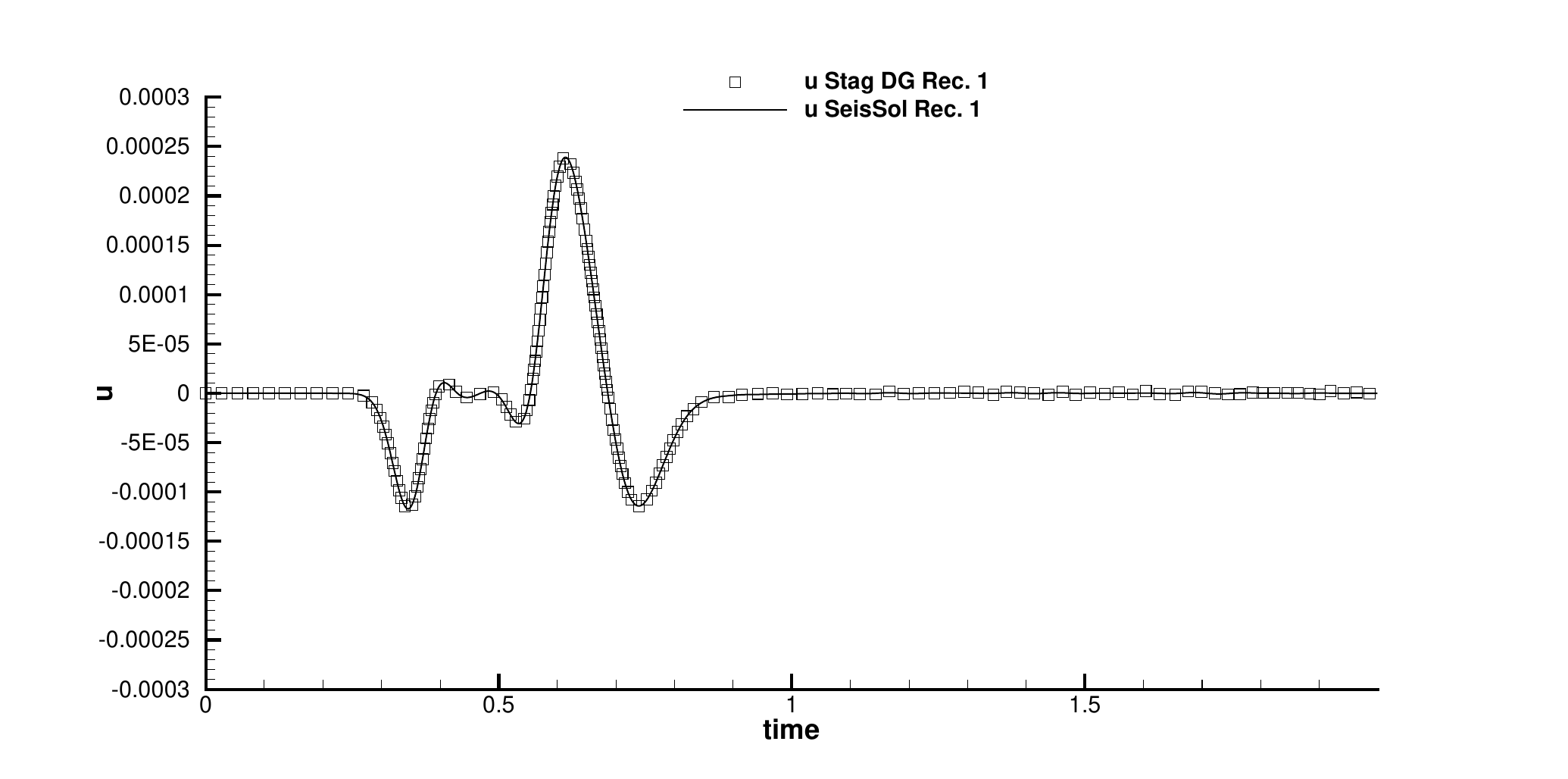} 
\includegraphics[width=0.33\columnwidth]{./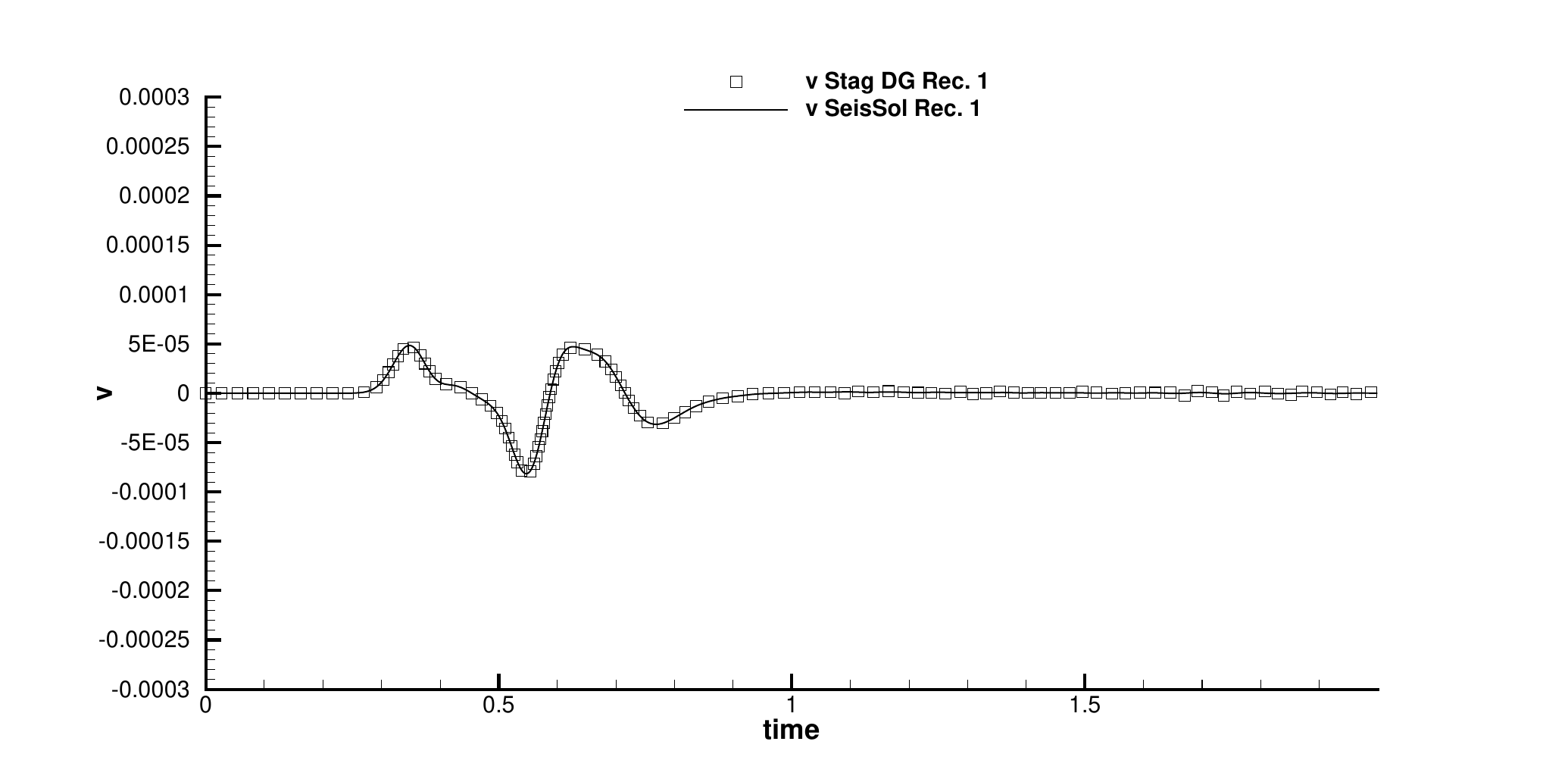} 
\includegraphics[width=0.33\columnwidth]{./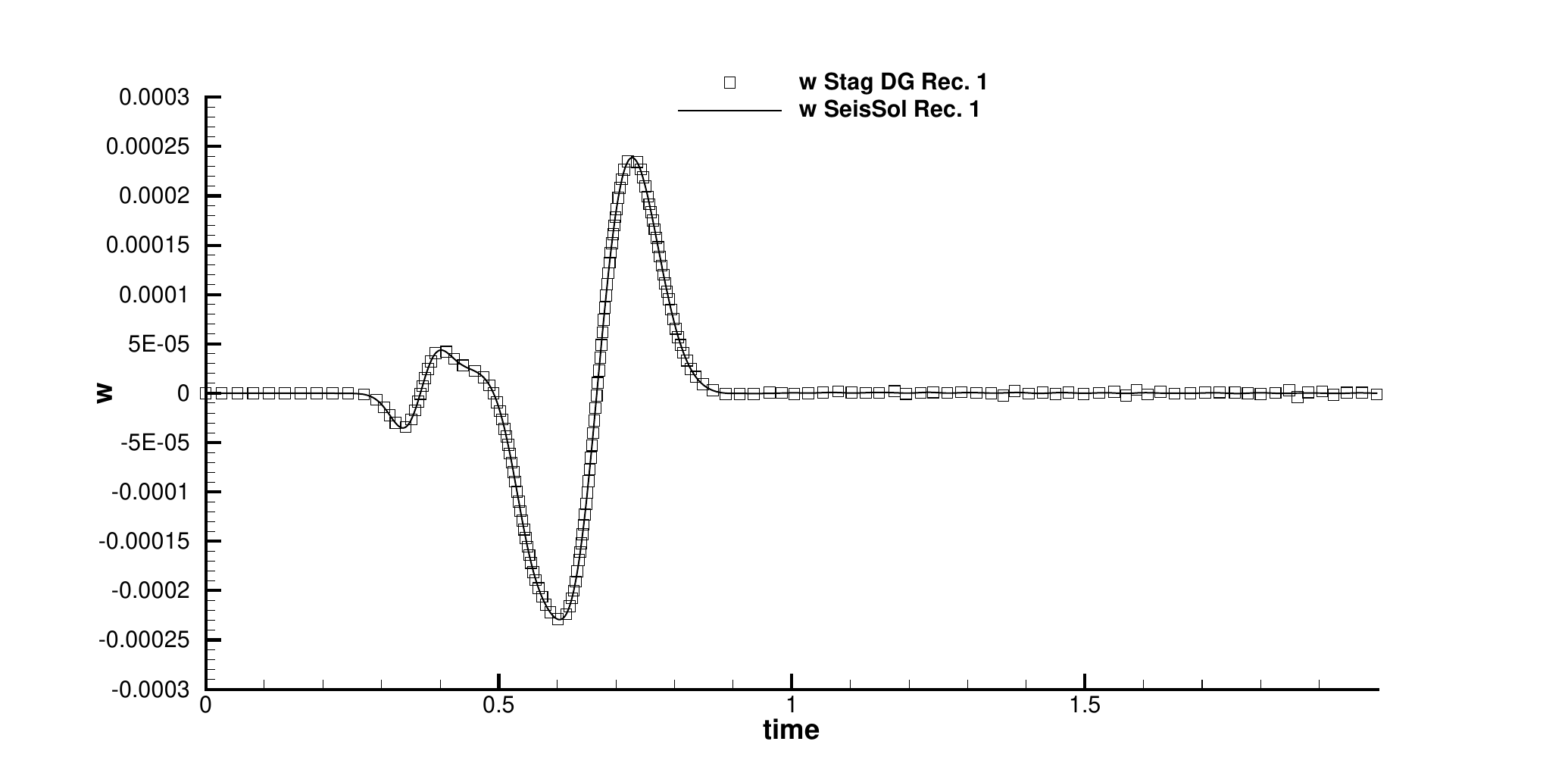} \\
\includegraphics[width=0.33\columnwidth]{./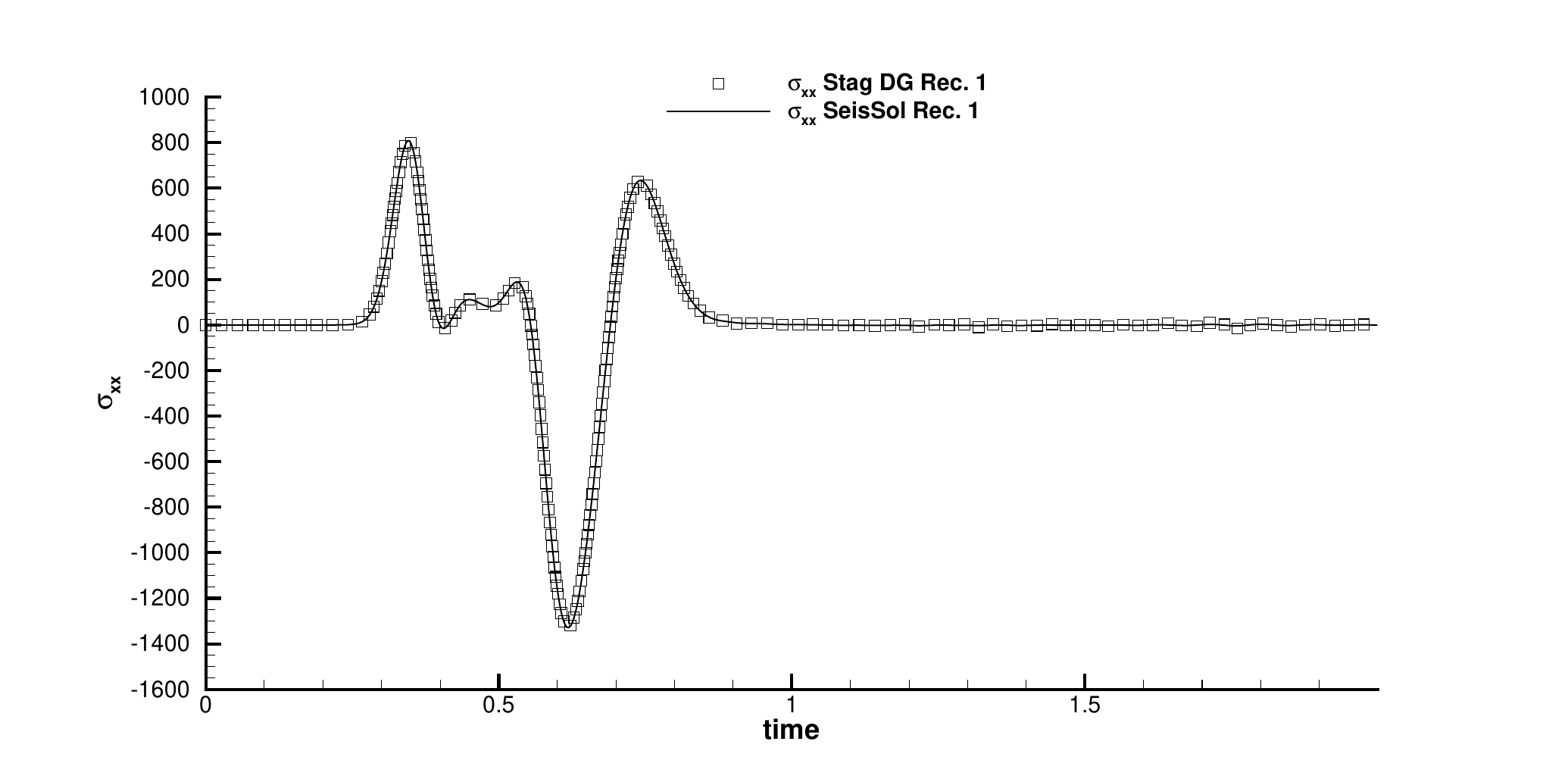} 
\includegraphics[width=0.33\columnwidth]{./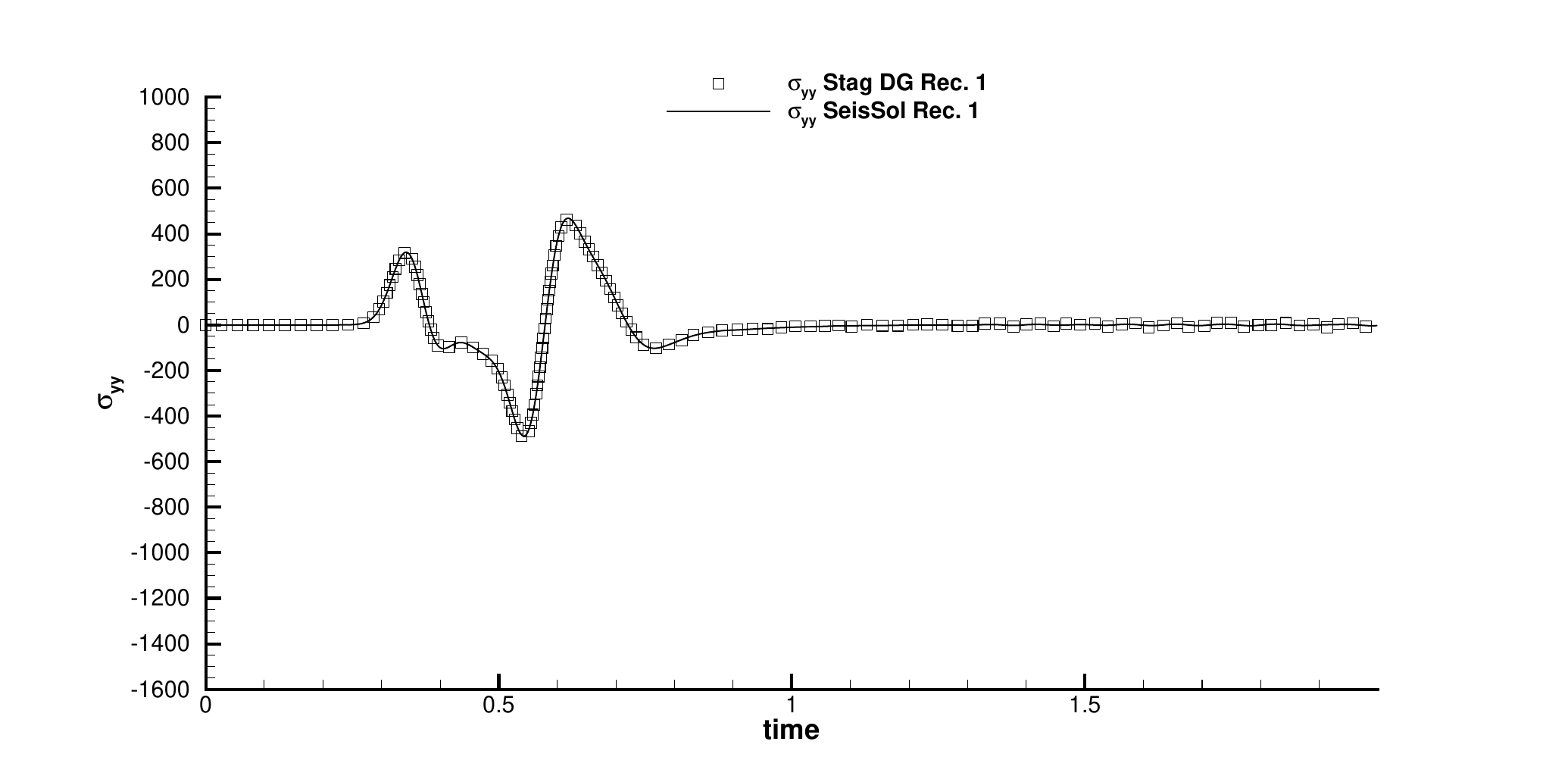} 
\includegraphics[width=0.33\columnwidth]{./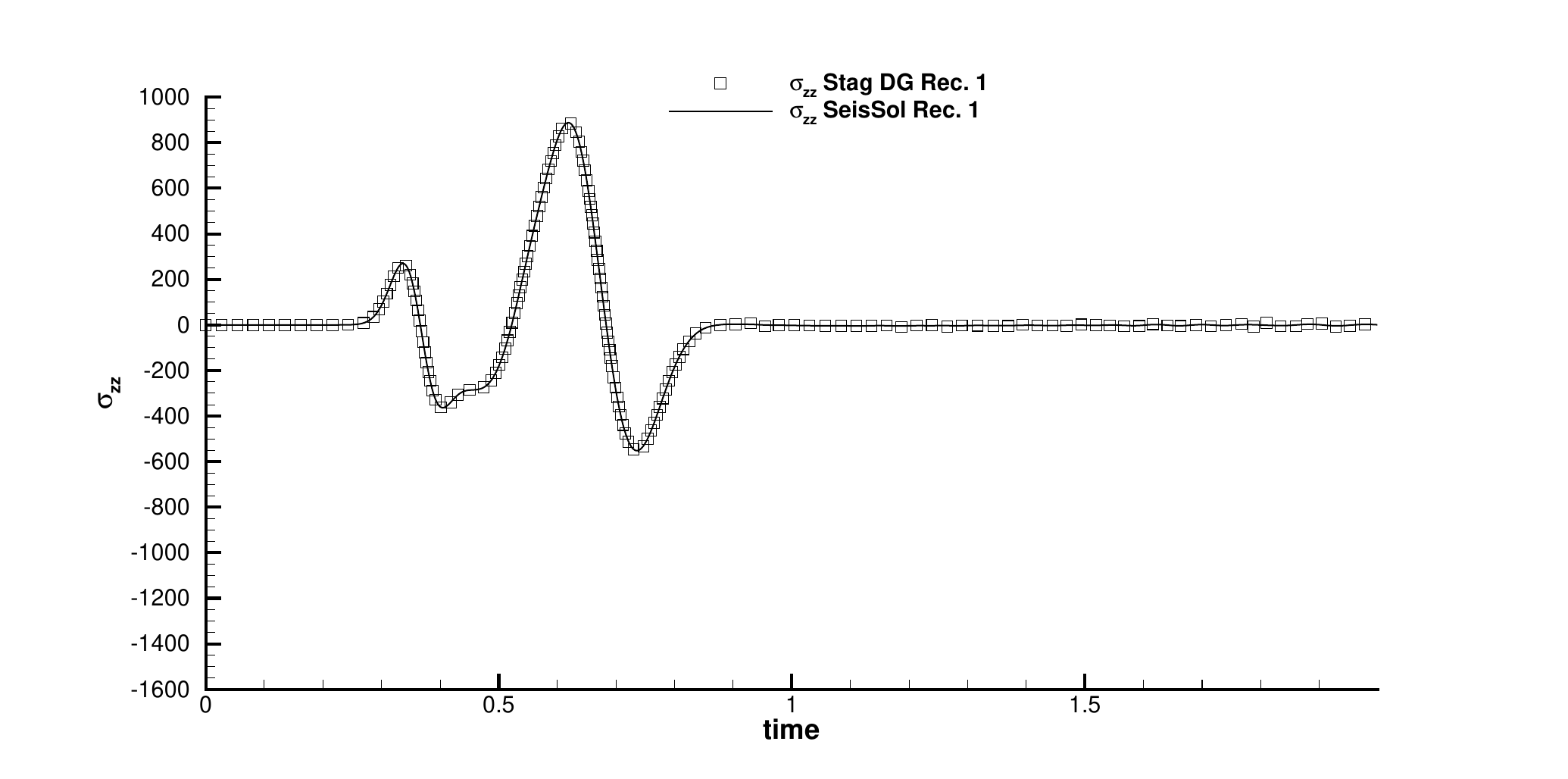} \\
\includegraphics[width=0.33\columnwidth]{./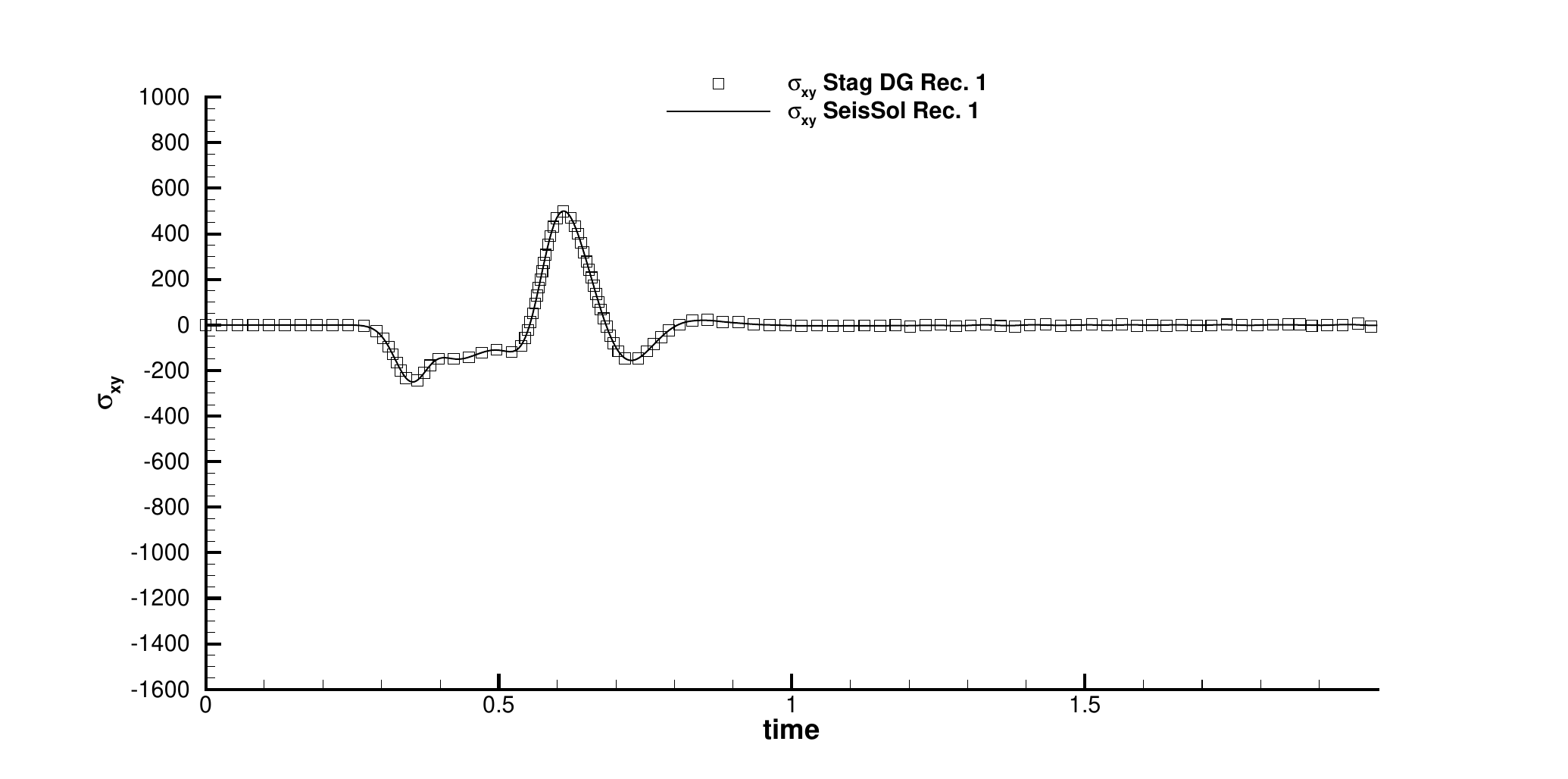} 
\includegraphics[width=0.33\columnwidth]{./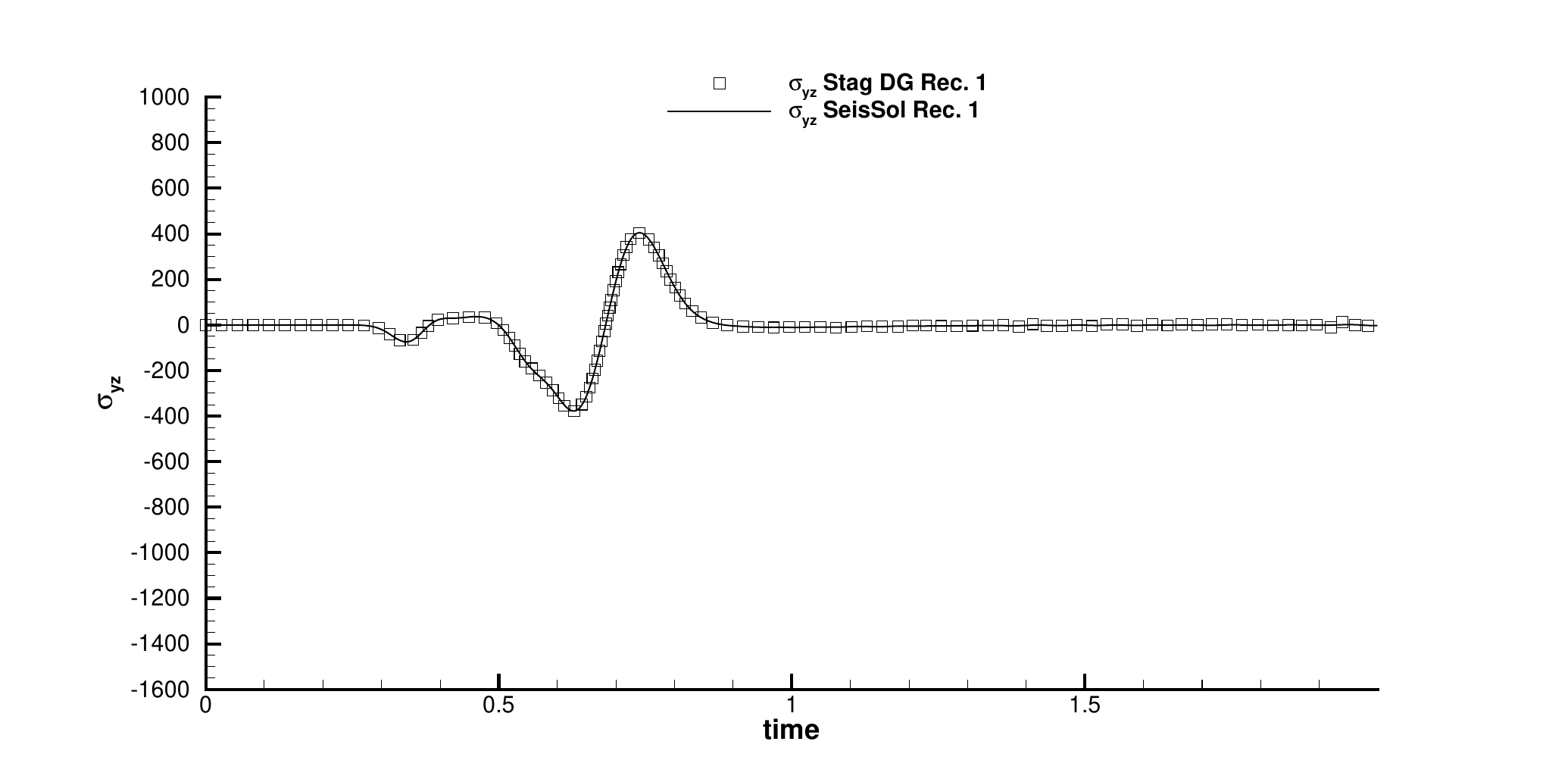} 
\includegraphics[width=0.33\columnwidth]{./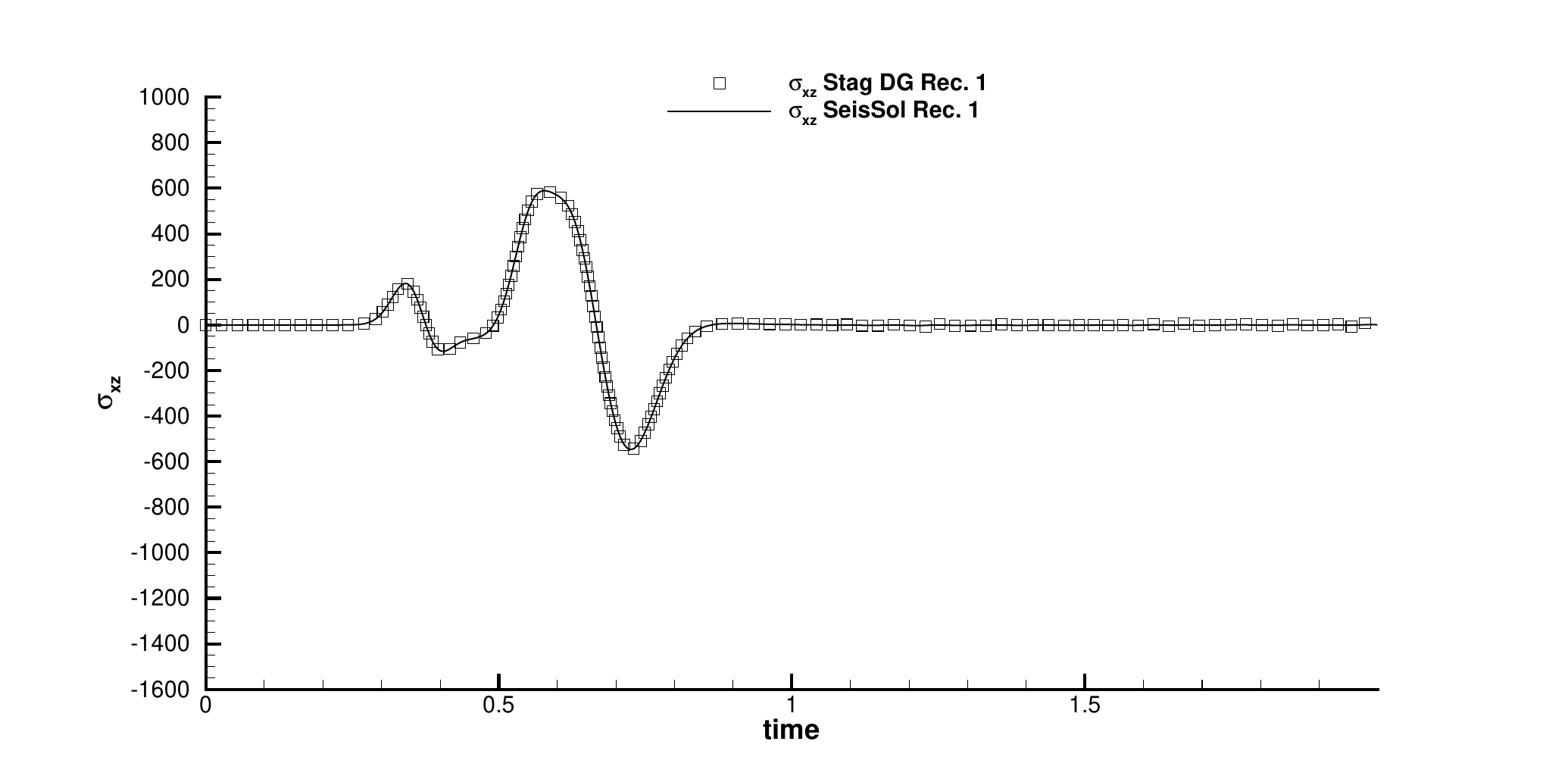}
\caption{Simple 3D wave propagation problem. Comparison of the numerical and reference solution in the first receiver, from top left to bottom right: $u,v,w,\sigma_{xx},\sigma_{yy},\sigma_{zz},\sigma_{xy},\sigma_{yz},\sigma_{xz}$.}%
\label{fig.NT3DI_2}%
\end{figure*}
\begin{figure*}%
\includegraphics[width=0.33\columnwidth]{./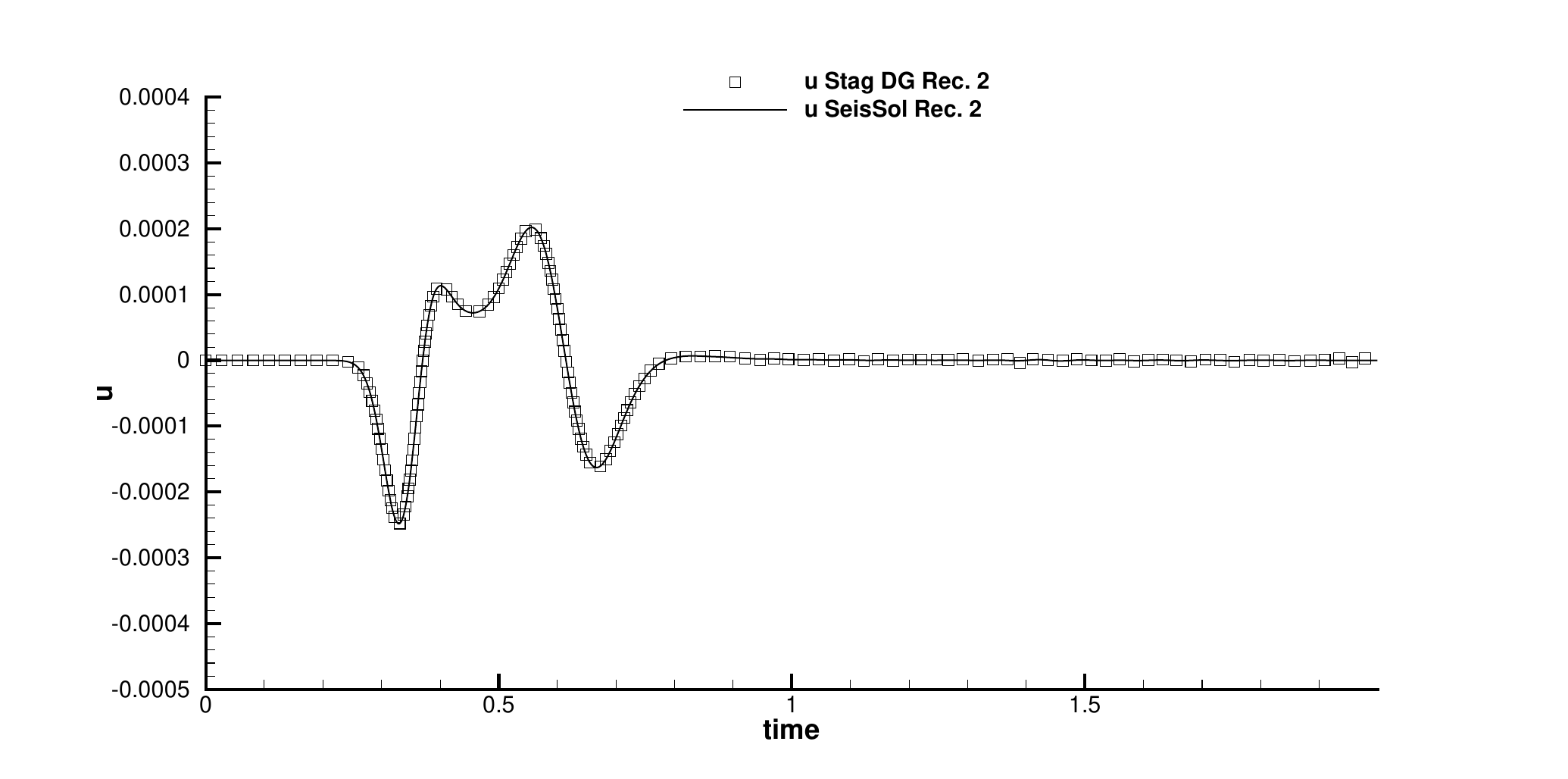} 
\includegraphics[width=0.33\columnwidth]{./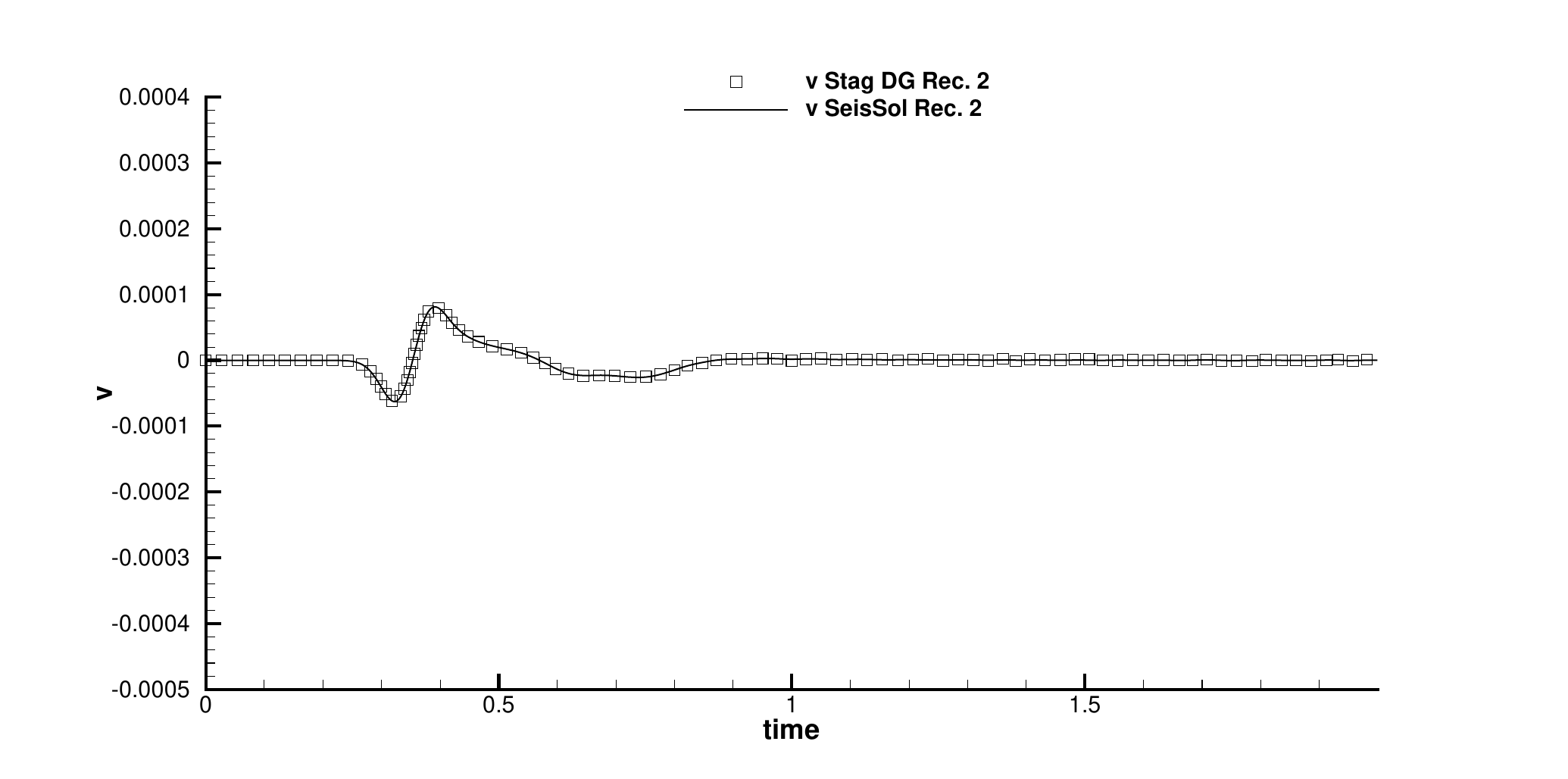} 
\includegraphics[width=0.33\columnwidth]{./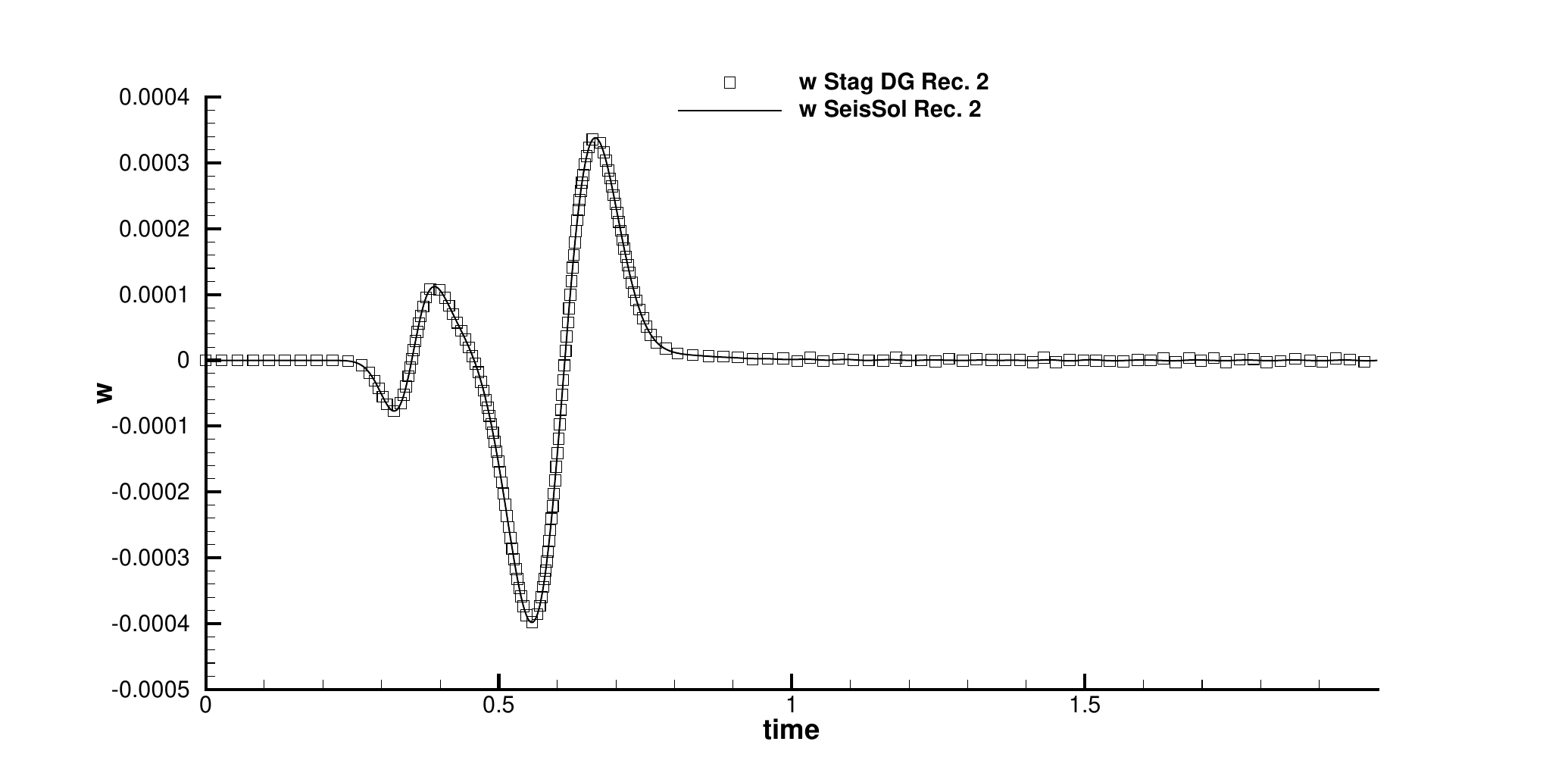} \\
\includegraphics[width=0.33\columnwidth]{./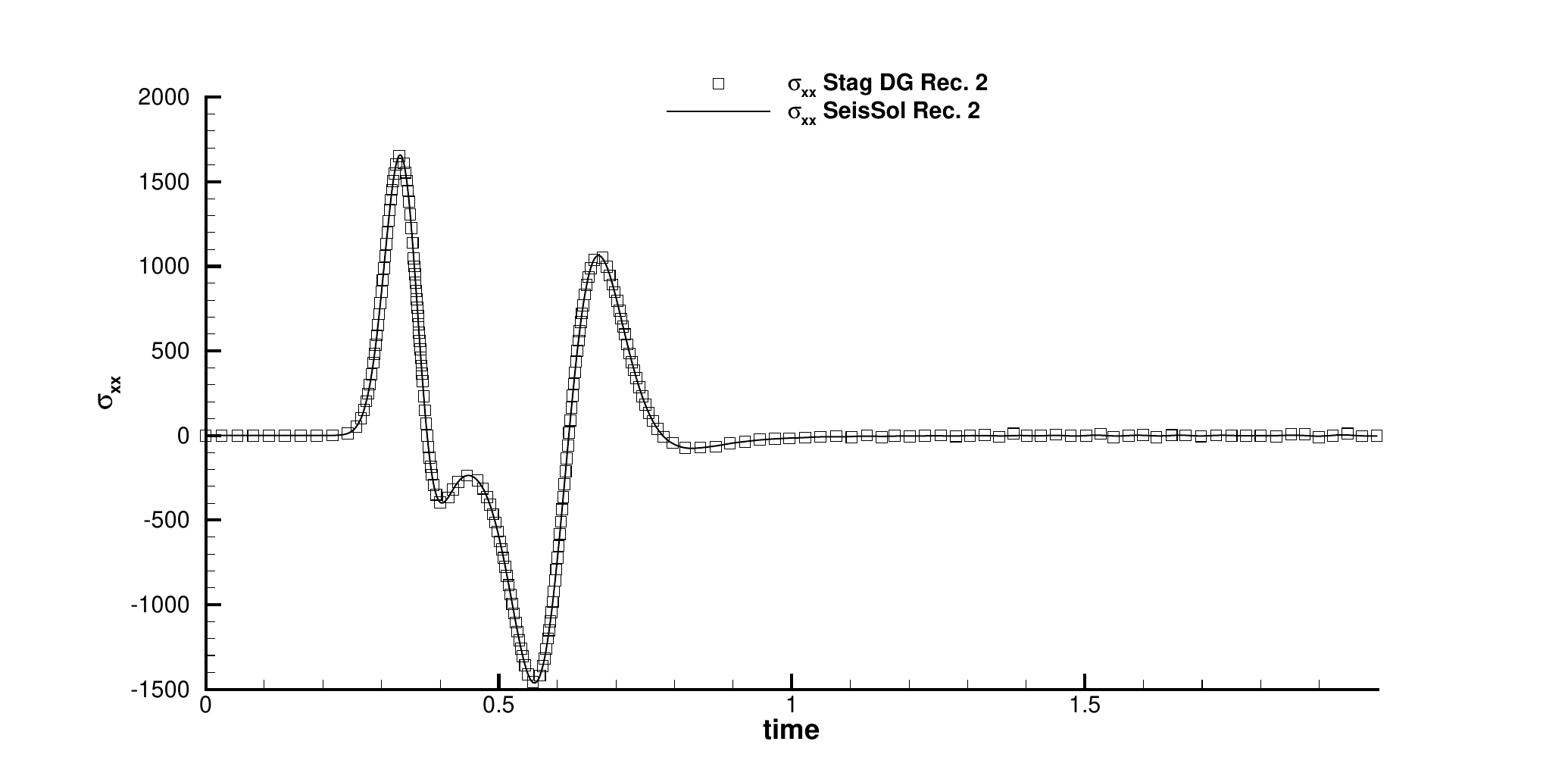} 
\includegraphics[width=0.33\columnwidth]{./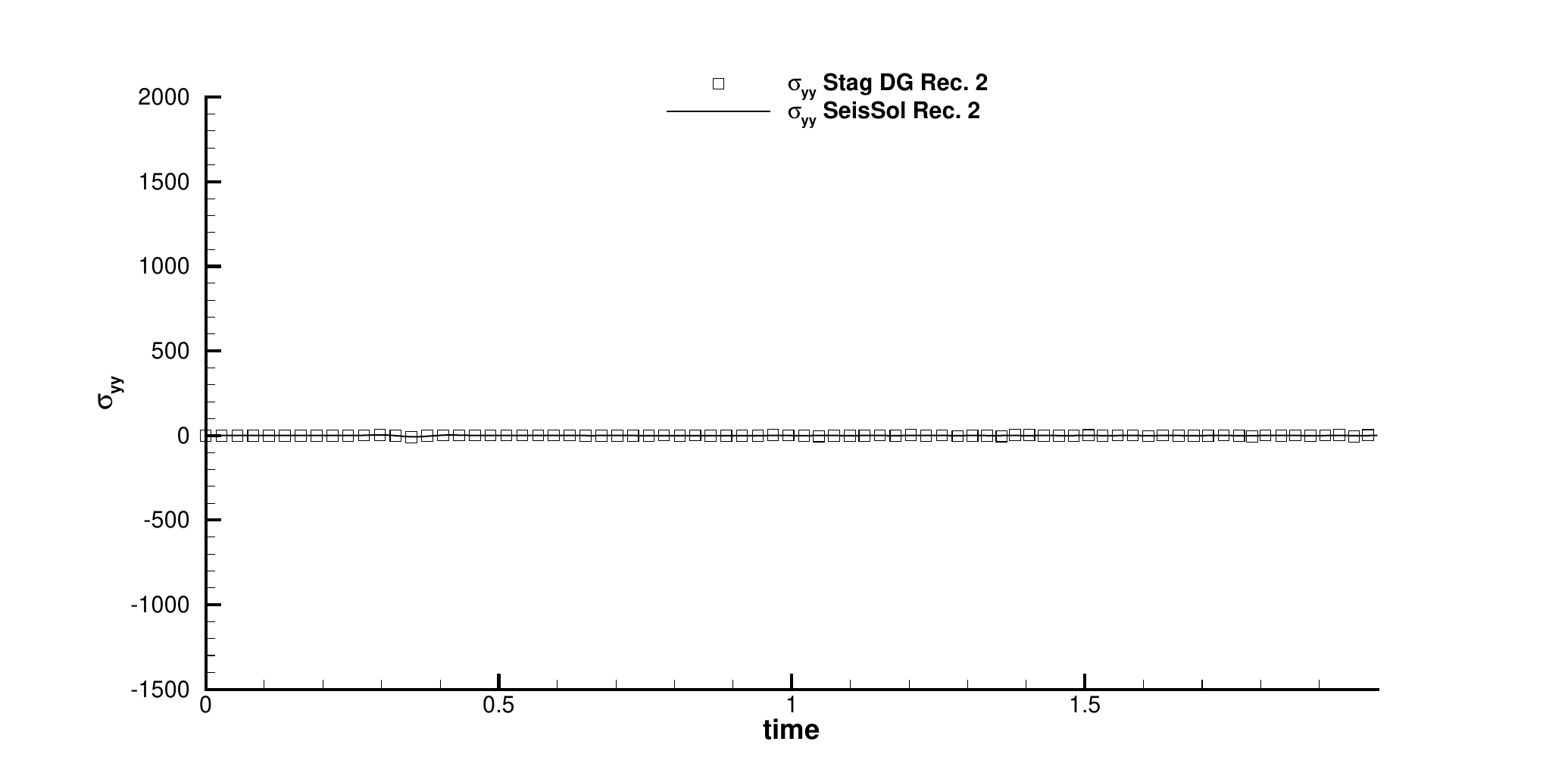} 
\includegraphics[width=0.33\columnwidth]{./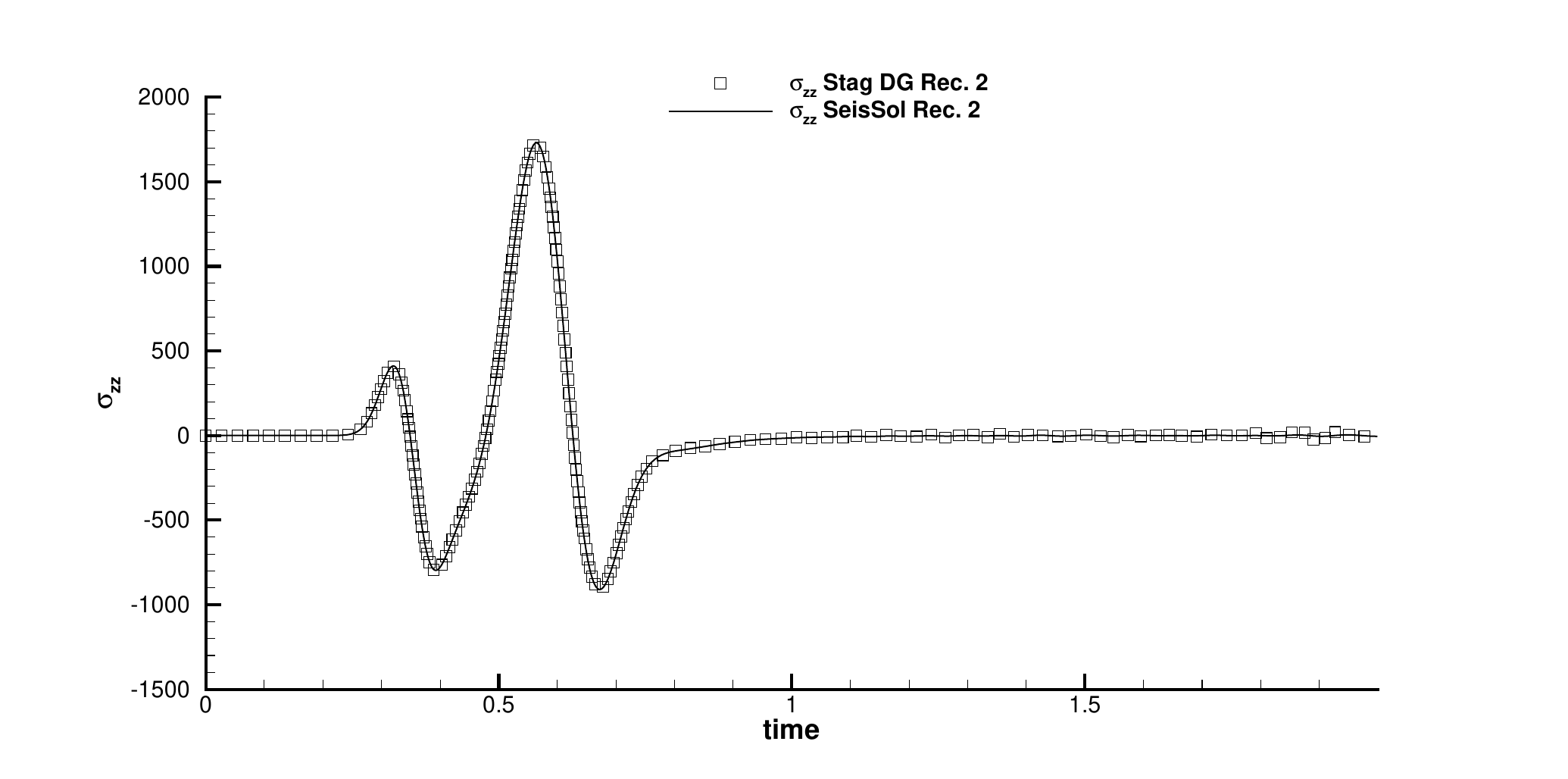} \\
\includegraphics[width=0.33\columnwidth]{./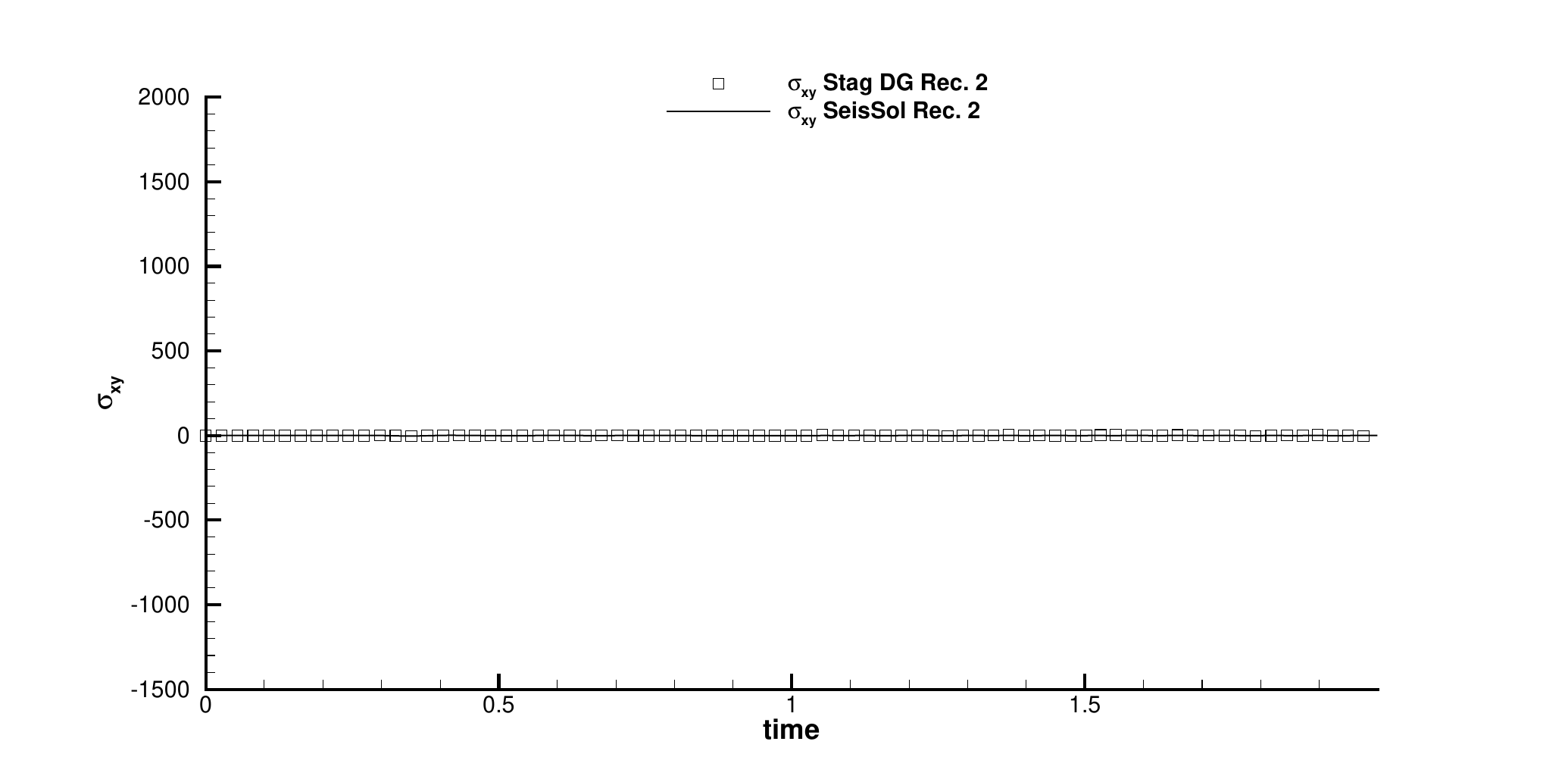} 
\includegraphics[width=0.33\columnwidth]{./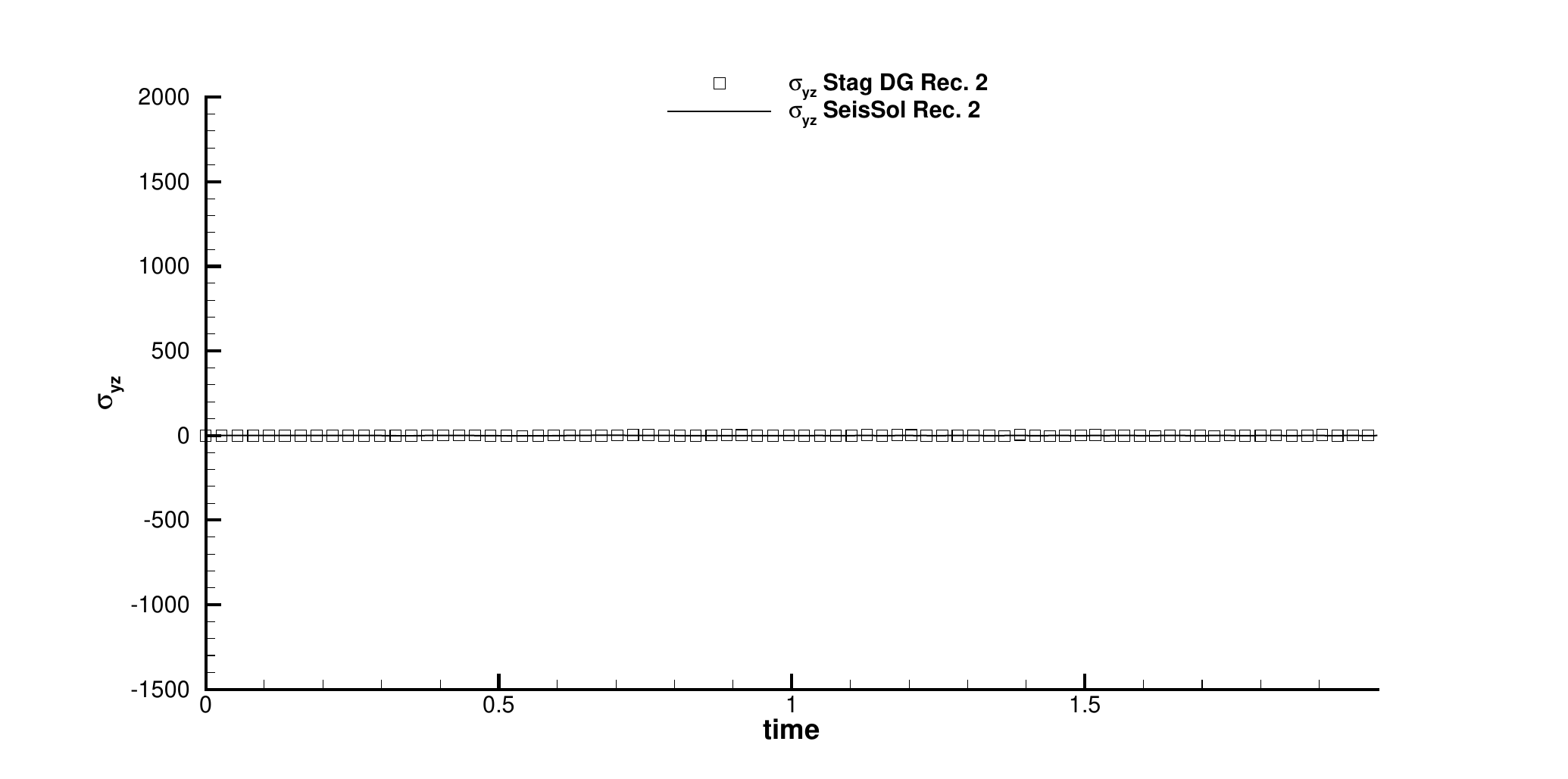} 
\includegraphics[width=0.33\columnwidth]{./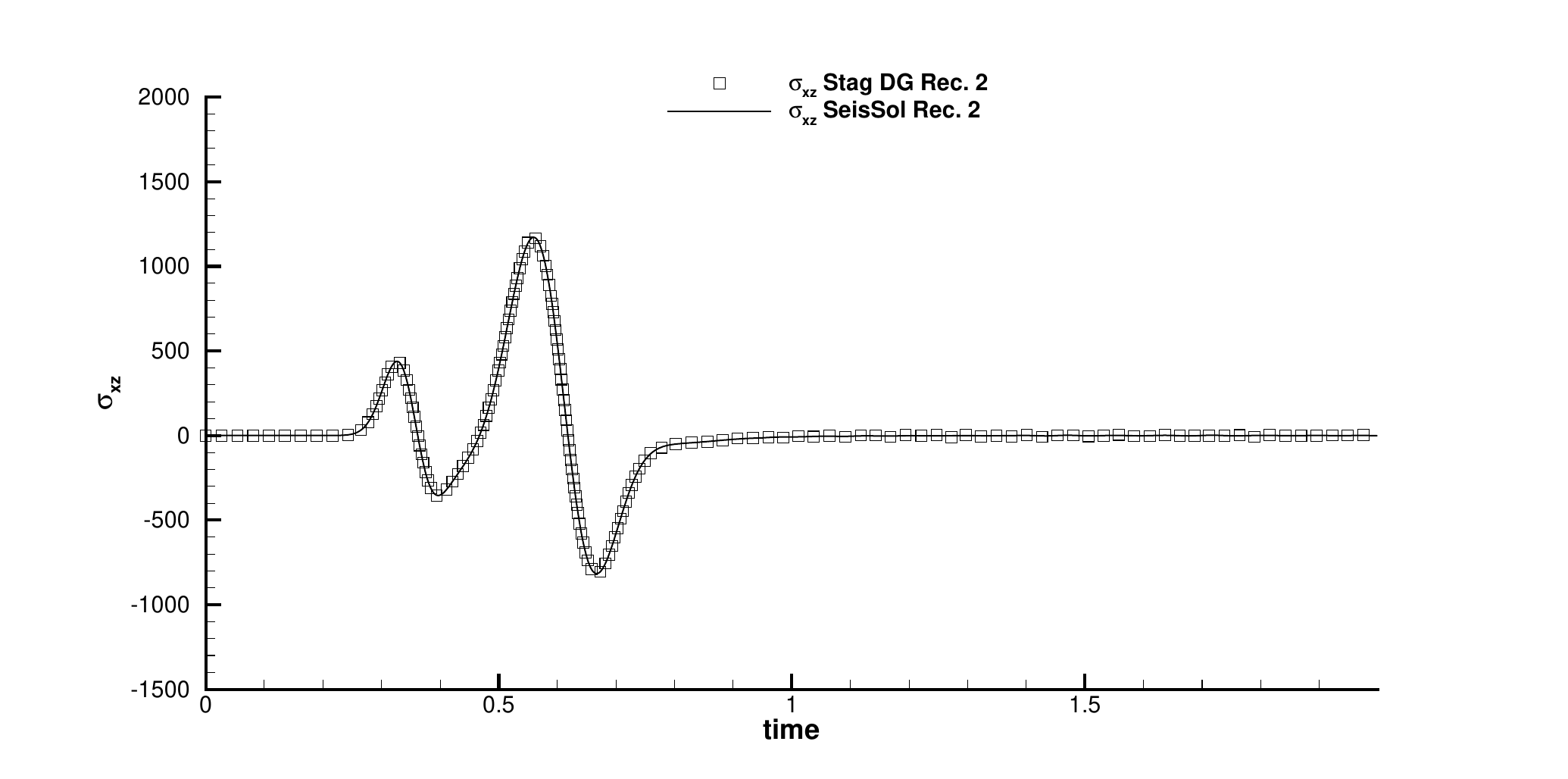}
\caption{Simple 3D wave propagation problem. Comparison of the numerical and reference solution in the second receiver, from top left to bottom right: $u,v,w,\sigma_{xx},\sigma_{yy},\sigma_{zz},\sigma_{xy},\sigma_{yz},\sigma_{xz}$}%
\label{fig.NT3DI_3}%
\end{figure*}

\subsection{Scattering of a planar wave on a sphere} 
We consider here the 3D extension of the test reported in Section \ref{sec_2dpw}, which consists of a planar $p-$wave traveling in the $x-$direction and hitting a sphere. 
The computational domain is $\Omega=[-3,3]^3 - B_{0.25}$, where $B_r$ is the ball of radius $r$. As numerical parameters we set $\Ni=31732$ {elements of average size $h=0.42$}, $(p,p_\gamma)=(4,2)$, $\Delta t=0.01$ 
and $t_{end}=1.0s$. We consider three receivers placed in $\xx_1=(-1,0,0)$, $\xx_2=(0,-1,0)$ and $\xx_3=(0.5,0.5,0.5)$. As a reference solution we use again the explicit ADER-DG 
scheme implemented in the \texttt{SeisSol} code using the same grid and piecewise polynomials of degree $N=4$ in space and time. The time series in the three receivers are reported 
in Figure \ref{fig.NT3DPW_1}. A very good agreement between the explicit ADER-DG scheme and the novel staggered space-time DG method can be observed also in this case. 
\begin{figure*}%
\includegraphics[width=0.33\columnwidth]{./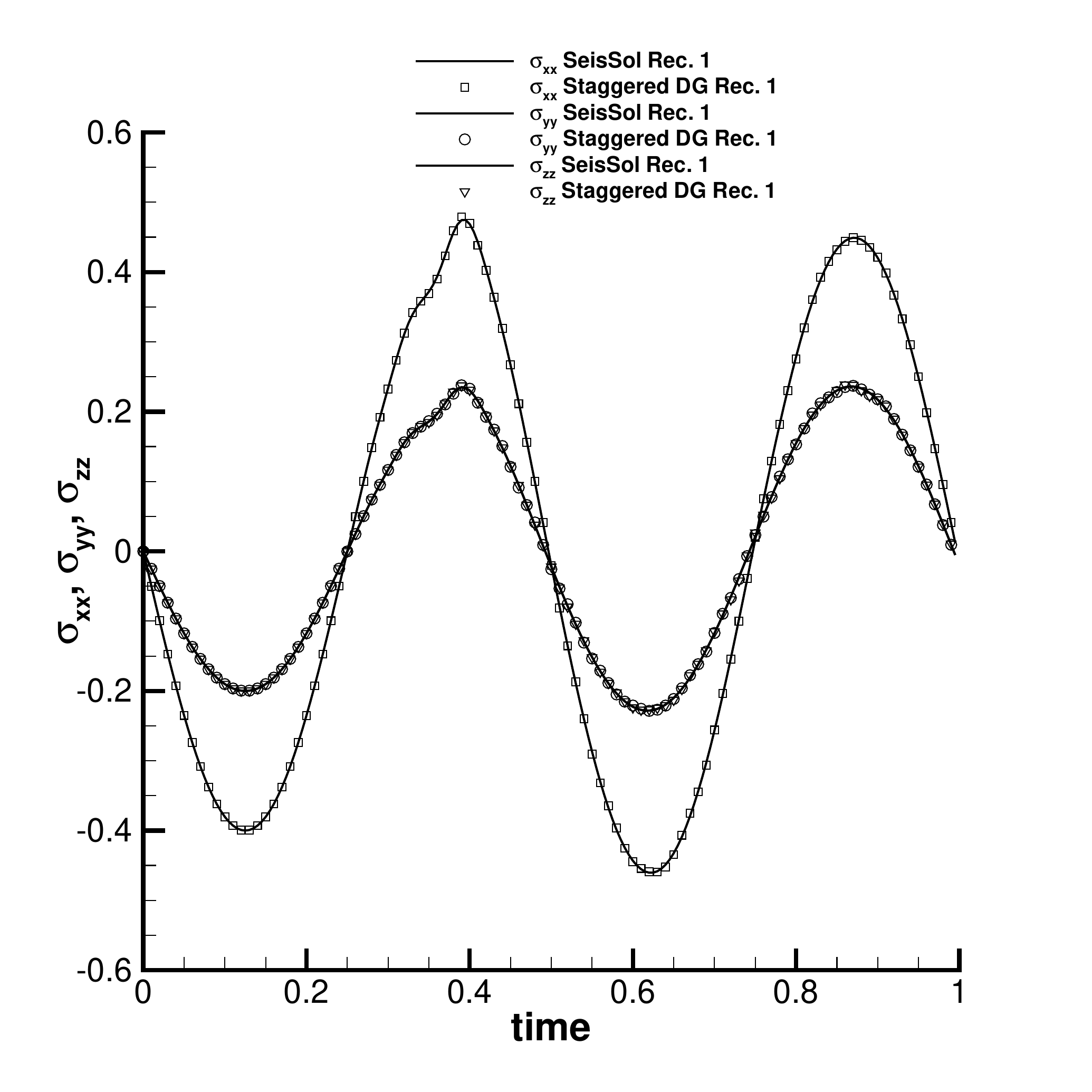} 
\includegraphics[width=0.33\columnwidth]{./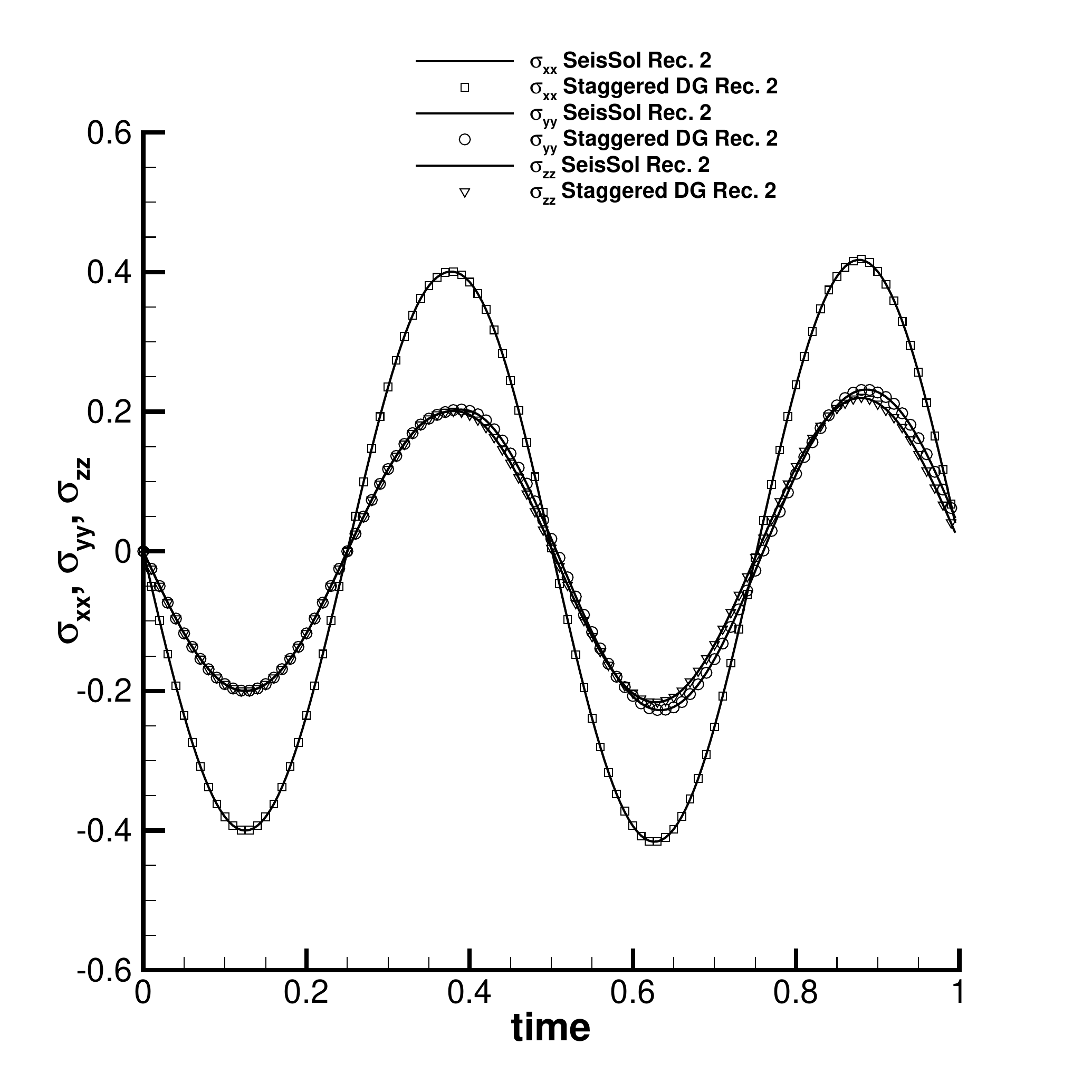} 
\includegraphics[width=0.33\columnwidth]{./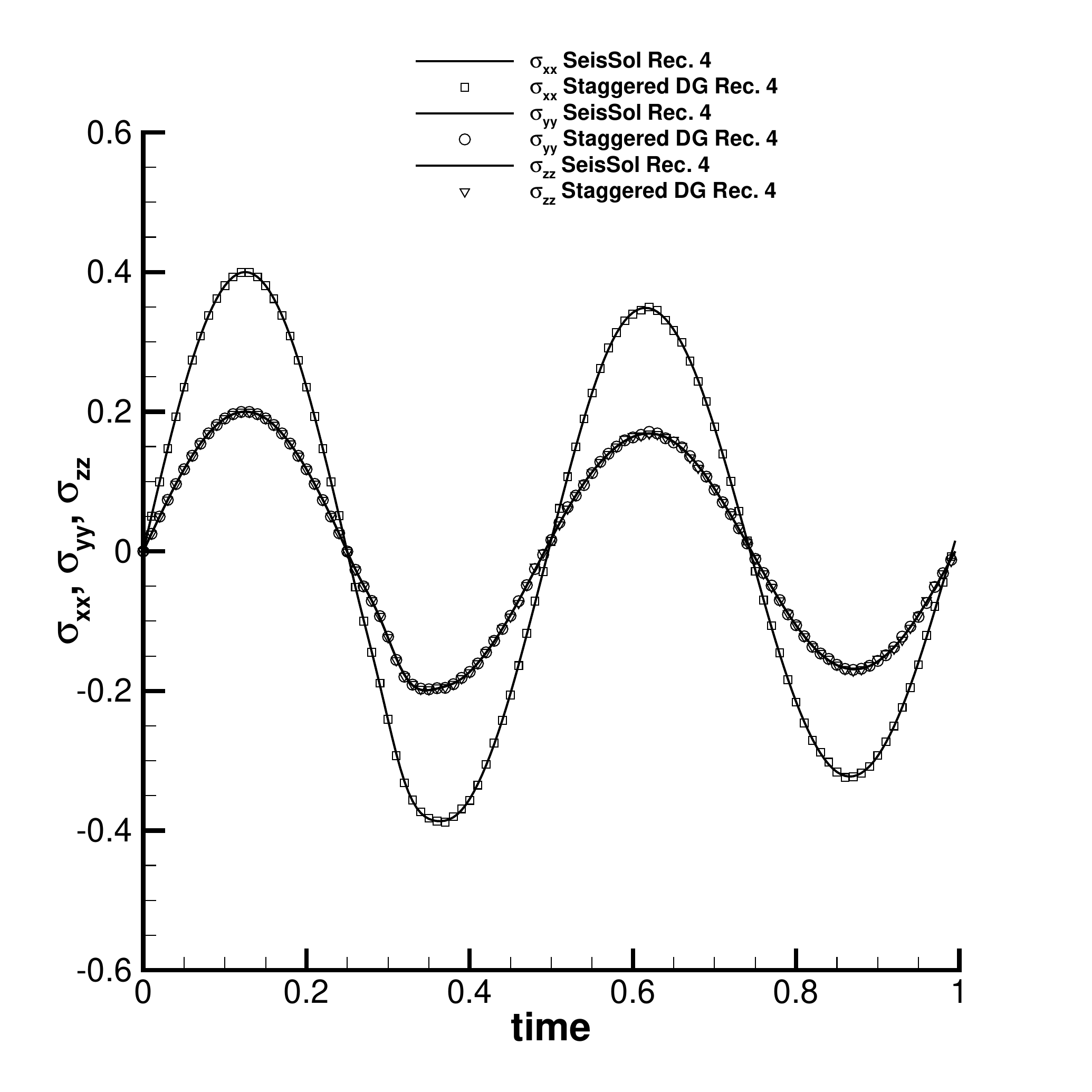} \\
\includegraphics[width=0.33\columnwidth]{./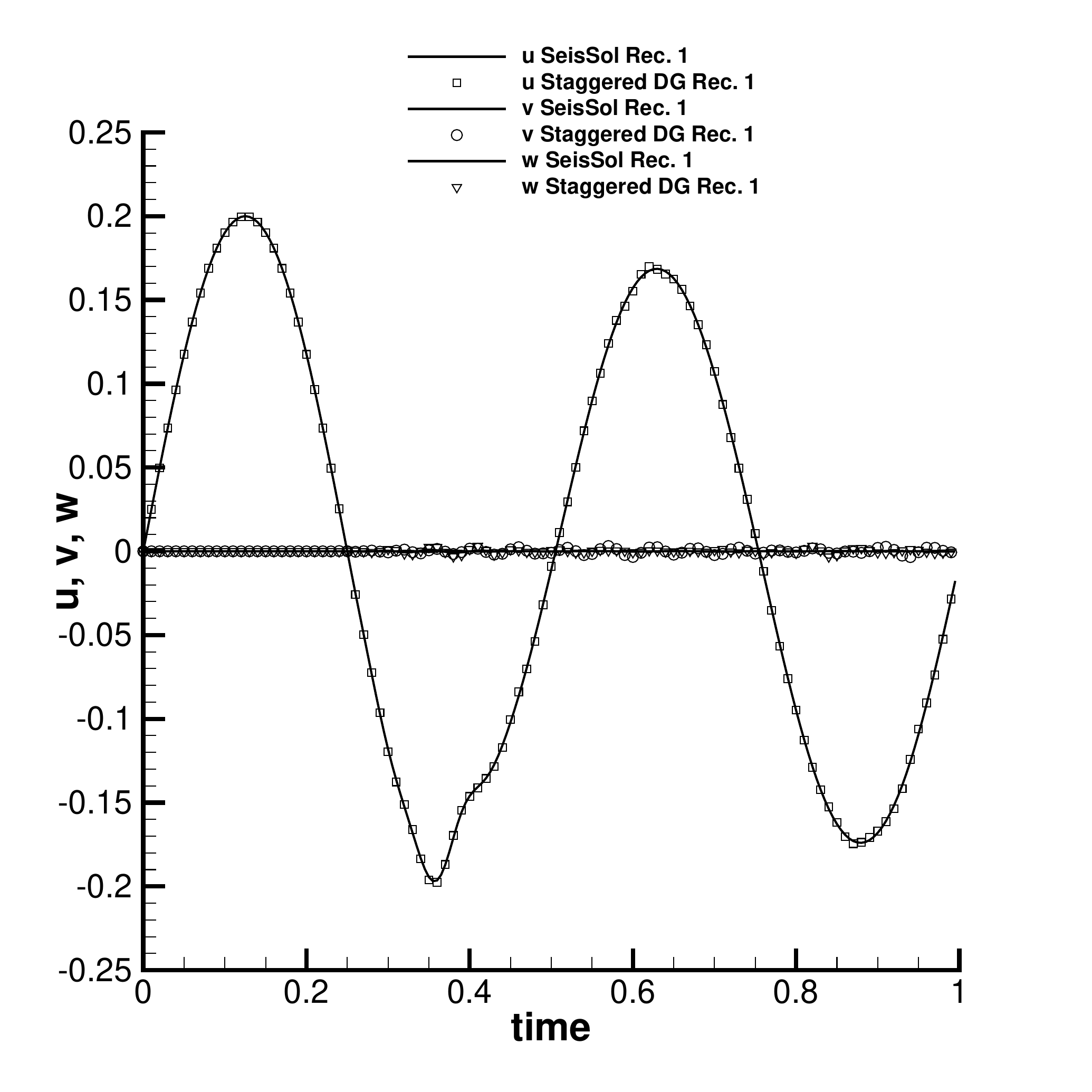} 
\includegraphics[width=0.33\columnwidth]{./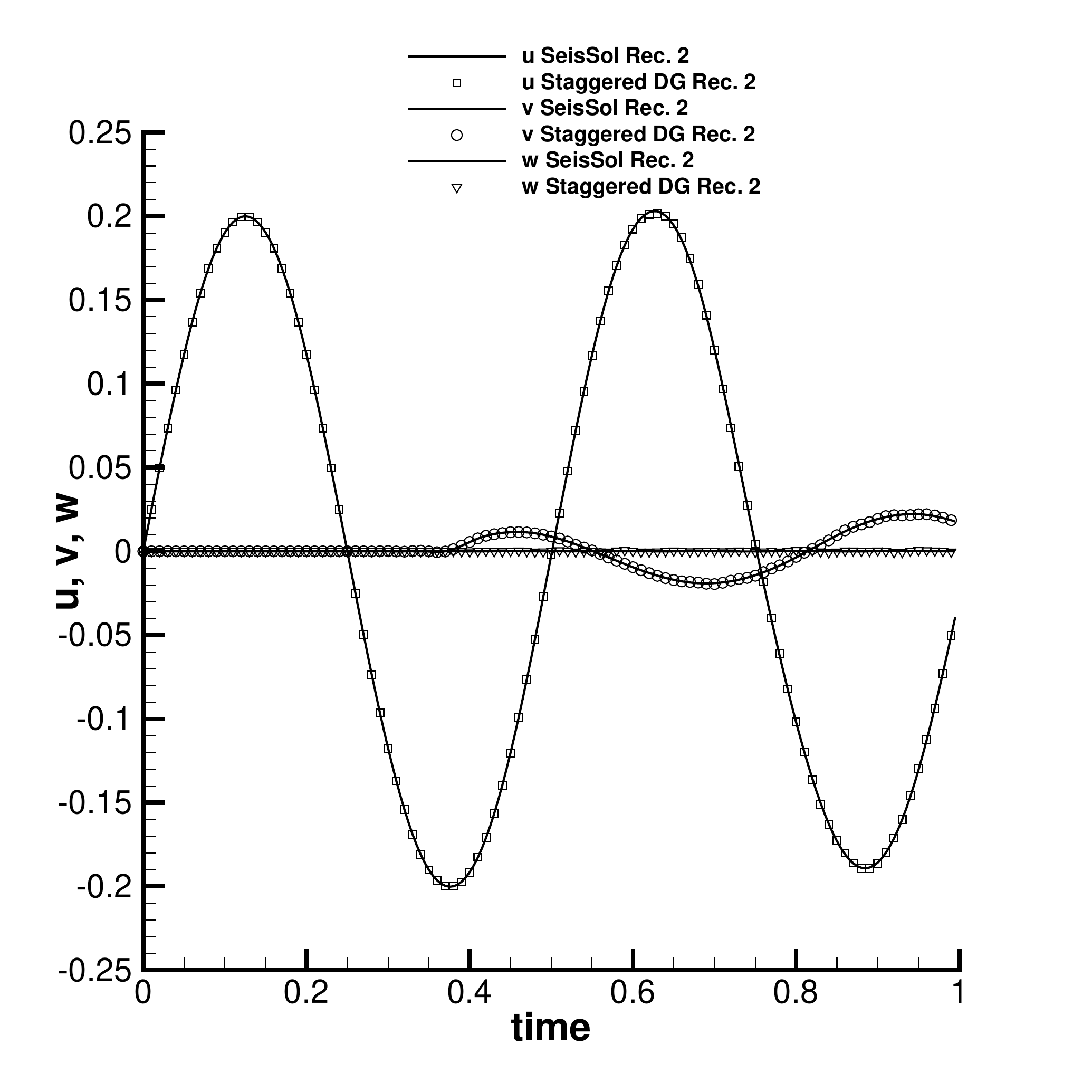} 
\includegraphics[width=0.33\columnwidth]{./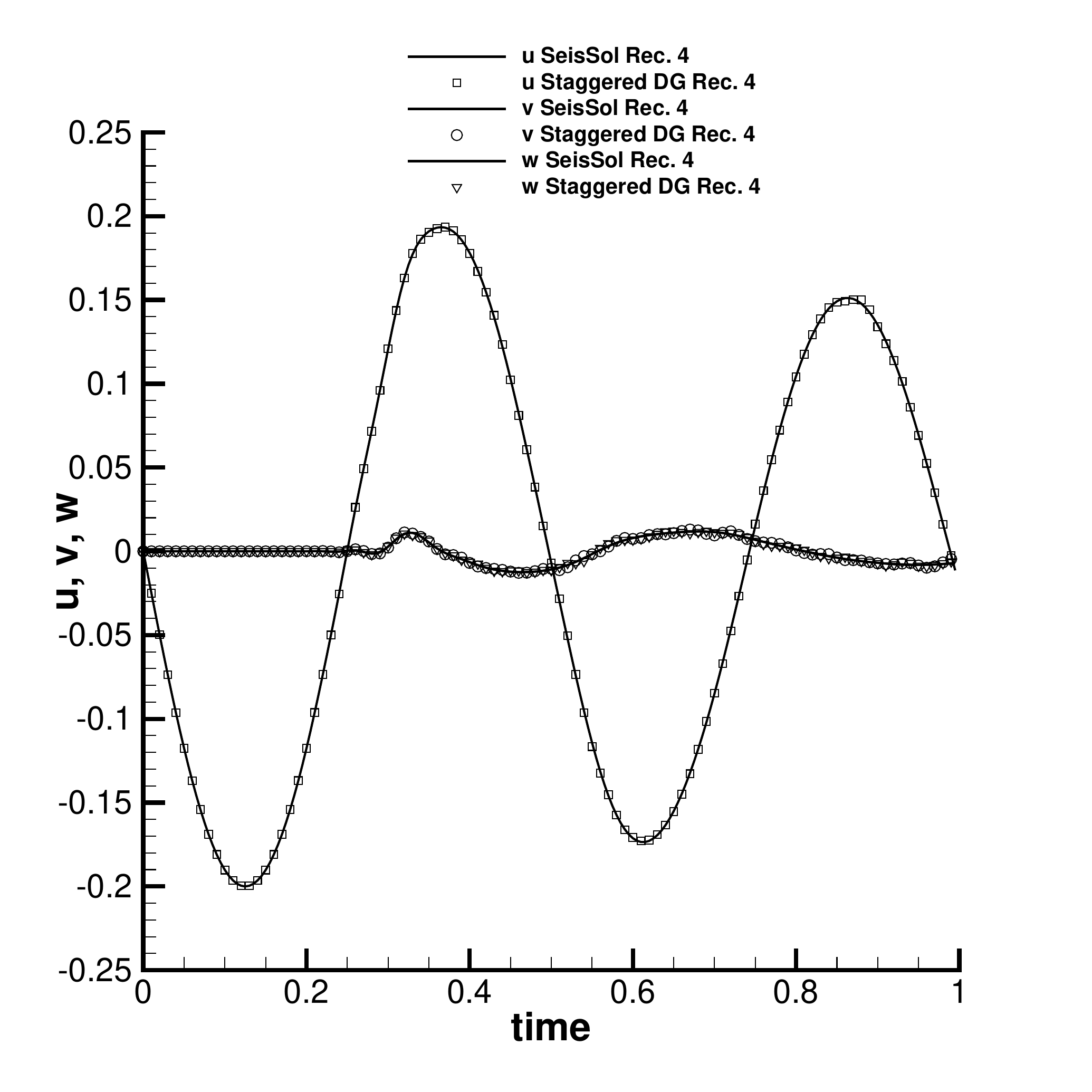} 
\caption{Scattering of a plane wave on a sphere. Comparison of the resulting signal in the three receivers. In the first row we report the time series of the stress tensor components  
$\sigma_{xx}$, $\sigma_{yy}$ and $\sigma_{zz}$ for the receivers $1,2,3$, respectively, from left to right. In the second row the velocity signal is reported for the same receivers. }%
\label{fig.NT3DPW_1}%
\end{figure*}

\subsection{Wave propagation in a complex 3D geometry}
We finally want to test the potential of our new numerical scheme for real applications. For this purpose we generate a tetrahedral mesh based on the real DTM data of the Mont Blanc region\footnote{ The DTM data have been taken from \textcolor{blue}{http://geodati.fmach.it/gfoss\_geodata/libro\_gfoss/.} Our computational domain is centered with respect to the UTM coordinates $(340000.0, 5075000.0)$ }. The horizontal extent of the domain is $30$ km in the $x$ and $y$ directions and ranges from $10$ km below the sea level to the free surface
given by the DTM data. We use a heterogeneous material distribution consisting in two different material layers. The first one is in the region $\{z>-1000\}m$, while the second one covers 
the region $z\leq -1000$m. The parameters for the material are reported in Table \ref{Tab:NT3DCG}. 
\begin{table*}%
\begin{center}
\begin{tabular}{ccccccc}
	& Position 	& $c_p (ms^{-1})$ & $c_s (ms^{-1})$ & $\rho (kg m^{-3})$ & $\lambda (GPa)$ & $\mu (GPa)$	 \\
	\hline
	Medium 1	& $z>-1000$m		  &	4000	&  2000 & 2600 & 20.8 & 10.4 \\
	Medium 2	& $z\leq-1000$m		&	6000	&  3464 & 2700 & 32.4 & 32.4 \\
	\hline
\end{tabular}
\end{center}
\caption{Material parameters for the wave propagation test in a complex 3D geometry.}
\label{Tab:NT3DCG}
\end{table*}
An initial velocity perturbation is placed in $\vec{x}=(0,0,0)$ for the vertical component of the velocity 
\begin{eqnarray}
	w(\xx,0) = a e^{-r^2/R^2}, 
\label{eq:NTCG3D_1b}
\end{eqnarray}
with $a=-10^{-2}$ and $R=300$m. All other variables are set to zero. The computational domain is covered with $N_i=288998$ tetrahedra, whose characteristic size is $500$m close to the 
free surface and $3000$m far from it. For this test 
we use $p=4$ and the Crank-Nicolson time discretization, for which we have the discrete energy preserving property. Furthermore, we set $\Delta t=10^{-3}$s and $t_{end}=4.0$s. 
As reference solution we use again the explicit ADER-DG scheme used in the \texttt{SeisSol} code with the same mesh and a polynomial approximation degree in space and time of $N=4$.  
A comparison of the numerical solution obtained with the new implicit staggered DG scheme and the explicit ADER-DG method at $t=4.0$ is shown in Figure \ref{fig.NTCG1}. 
We consider also the signal captured in four receivers, whose positions are reported in Table \ref{tab:NTCG3D_2} and which are also graphically represented in the right panel of Figure  
\ref{fig.NTCG1}. The resulting time history of the velocity signals for the four receivers is reported in Figure \ref{fig.NTCG2}. A very good agreement between the new staggered DG scheme and 
the reference scheme can be observed also in this case with complex 3D geometry. It is important to note that the use of the energy preserving variant is crucial here to obtain the proper 
wave amplitude with the new staggered implicit DG method. 
Furthermore, we can use the simple matrix-free conjugate gradient method in this case, thanks to the good properties of the matrix for the discrete wave equation for the velocity 
\eqref{eq:velocity.sys}, which is symmetric and positive definite for $p_\gamma=0$. The computation was performed in parallel on the HazelHen supercomputer at the HLRS in Stuttgart, Germany, 
using {144 Xeon E5-2680 Cores}. The parallelization of both schemes was achieved by using the pure MPI standard. It has to be stressed that the MPI parallelization of our new staggered  
space-time DG scheme is straightforward, since we use a matrix-free iterative Krylov subspace method for the solution of the linear system \eqref{eq:velocity.sys}, and the parallelization 
of the matrix-vector product inside the iterative solver can be done exactly in the same way as for an explicit ADER-DG scheme, i.e. based on domain decomposition. 
As in \cite{gij2,gij5} we employ the free Metis software package \cite{metis} for the domain decomposition onto the various MPI ranks. 

\begin{figure*}%
\includegraphics[width=0.49\columnwidth]{./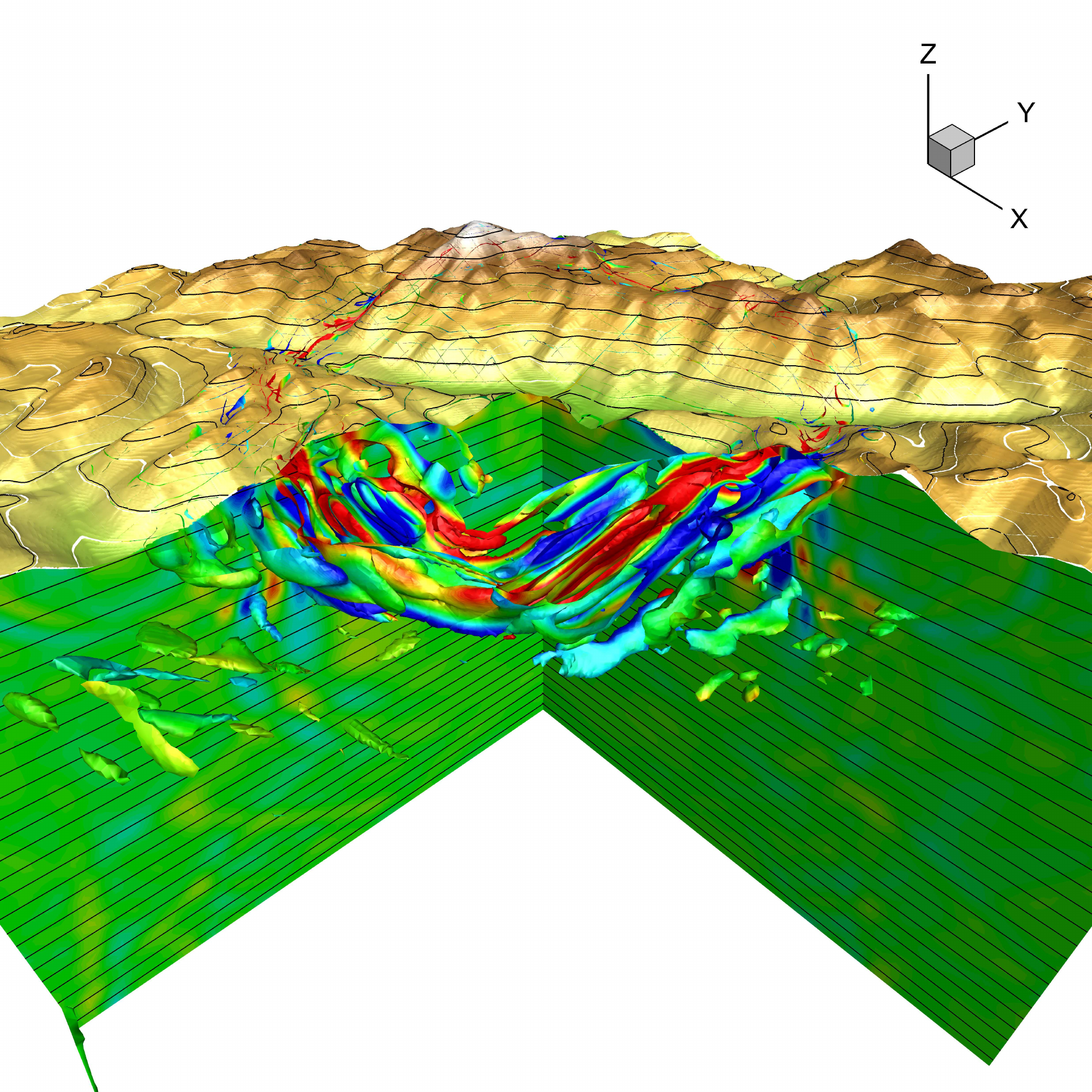} 
\includegraphics[width=0.49\columnwidth]{./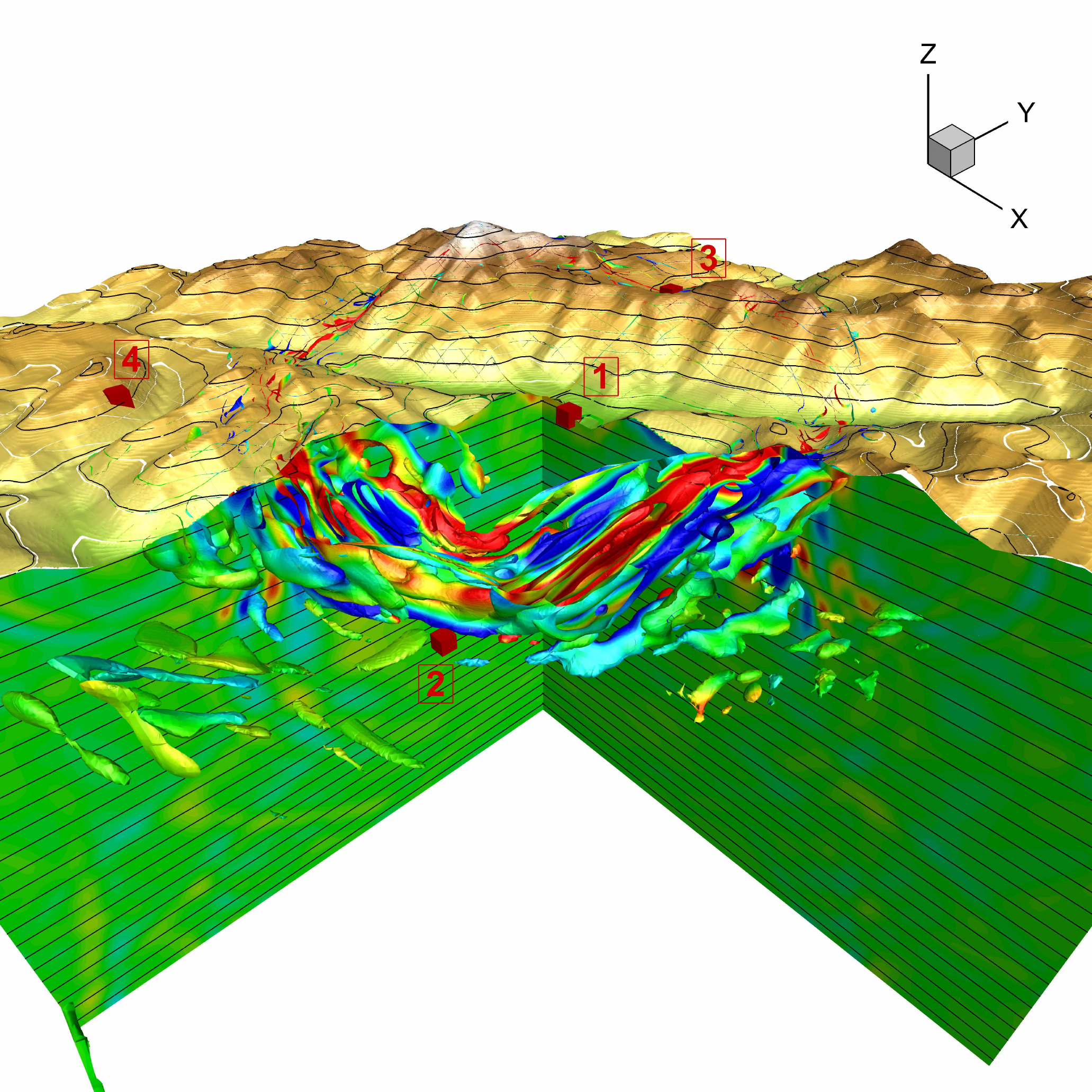}
\caption{Wave propagation test in a complex 3D geometry with real DTM data of the Mont Blanc region. Comparison of the explicit ADER-DG reference solution (left) with the numerical solution obtained with the new implicit staggered DG scheme (right) at time $t=4.0$.  In the right panel the receiver locations are indicated by the red boxes. We show the iso-surfaces $\pm 5\cdot 10^{-5}$ for the velocity components $u$ and $v$ colored by $w$.}
\label{fig.NTCG1}%
\end{figure*}

\begin{figure*}%
\includegraphics[width=0.49\columnwidth]{./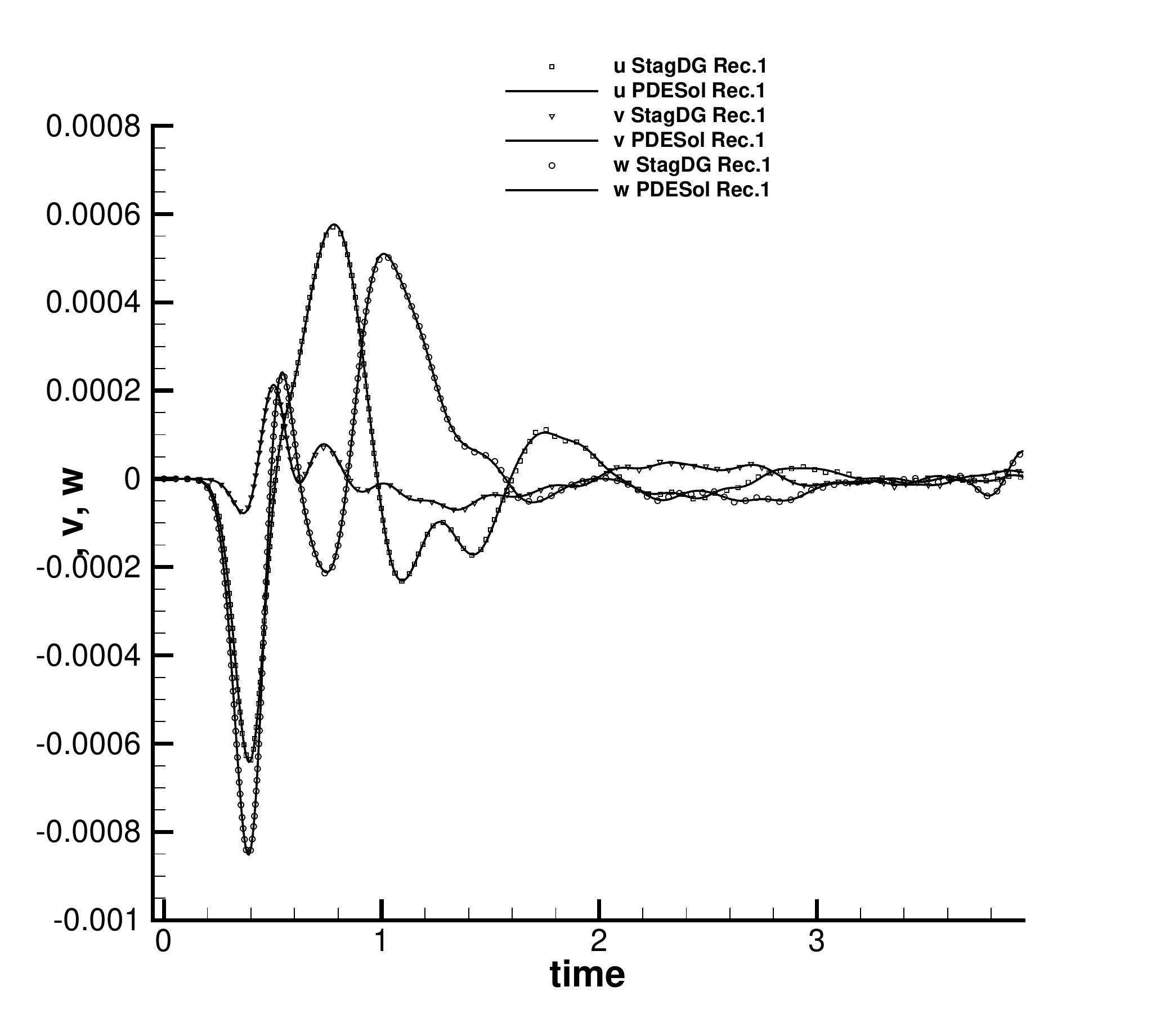} 
\includegraphics[width=0.49\columnwidth]{./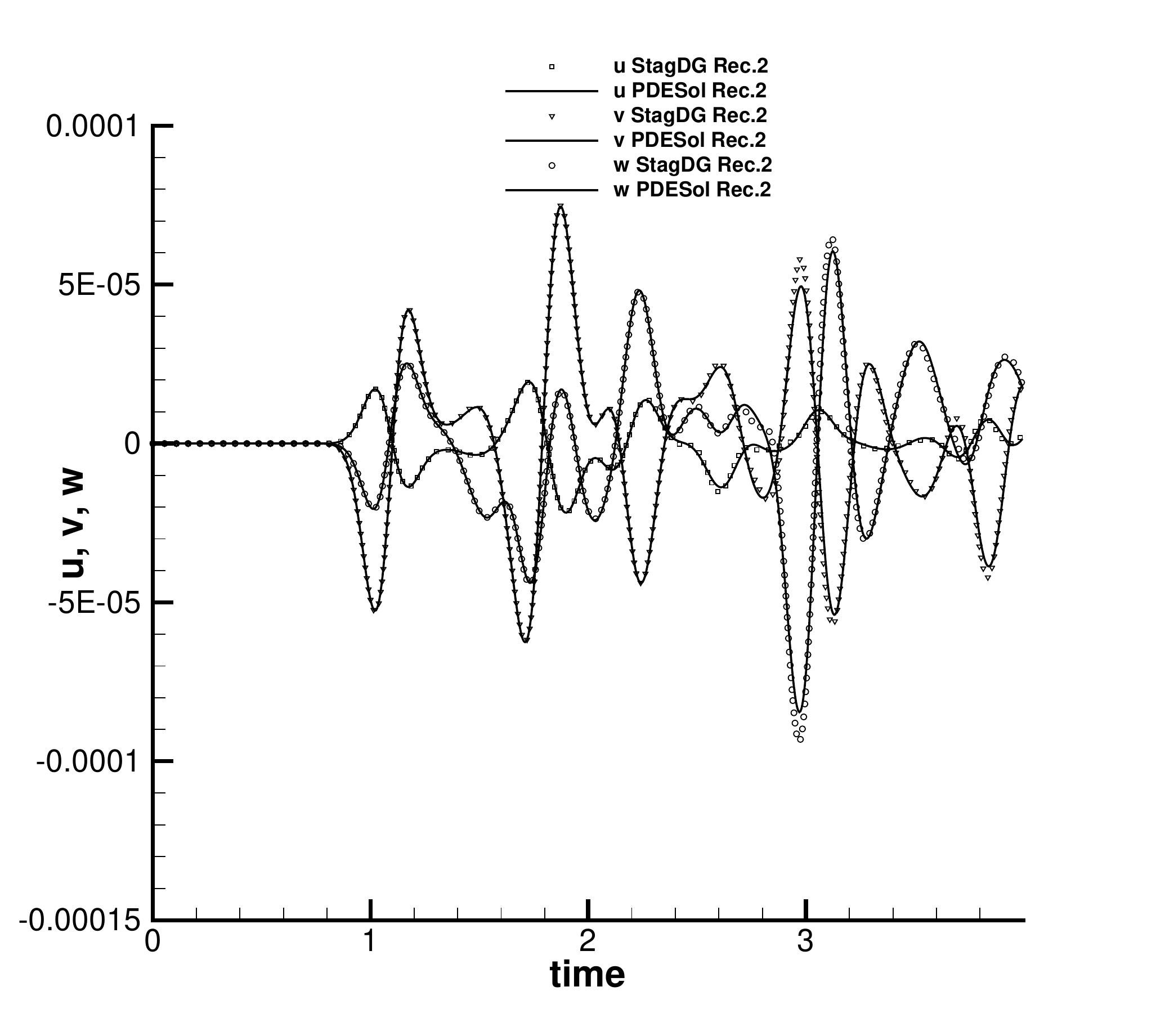} \\
\includegraphics[width=0.49\columnwidth]{./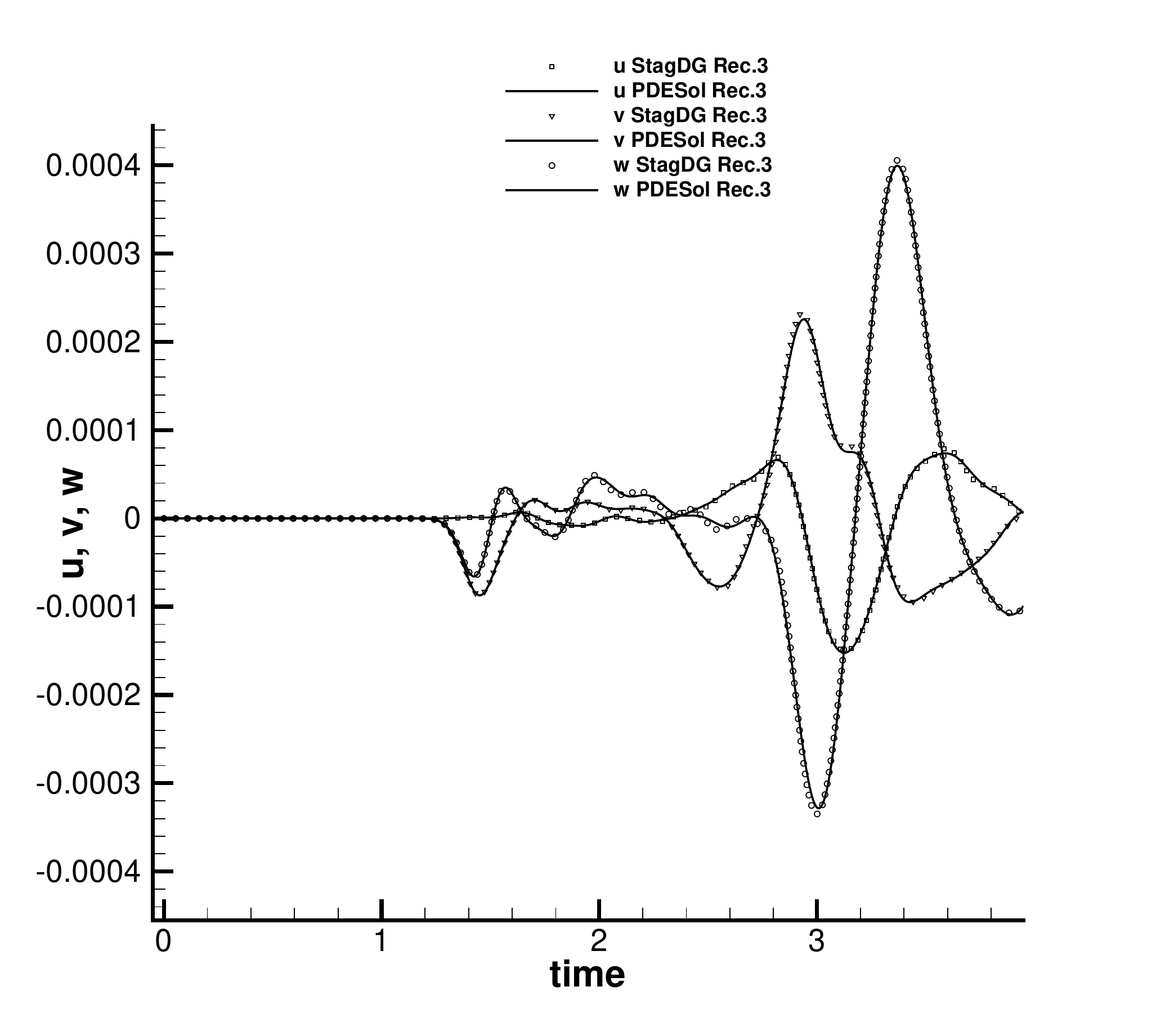} 
\includegraphics[width=0.49\columnwidth]{./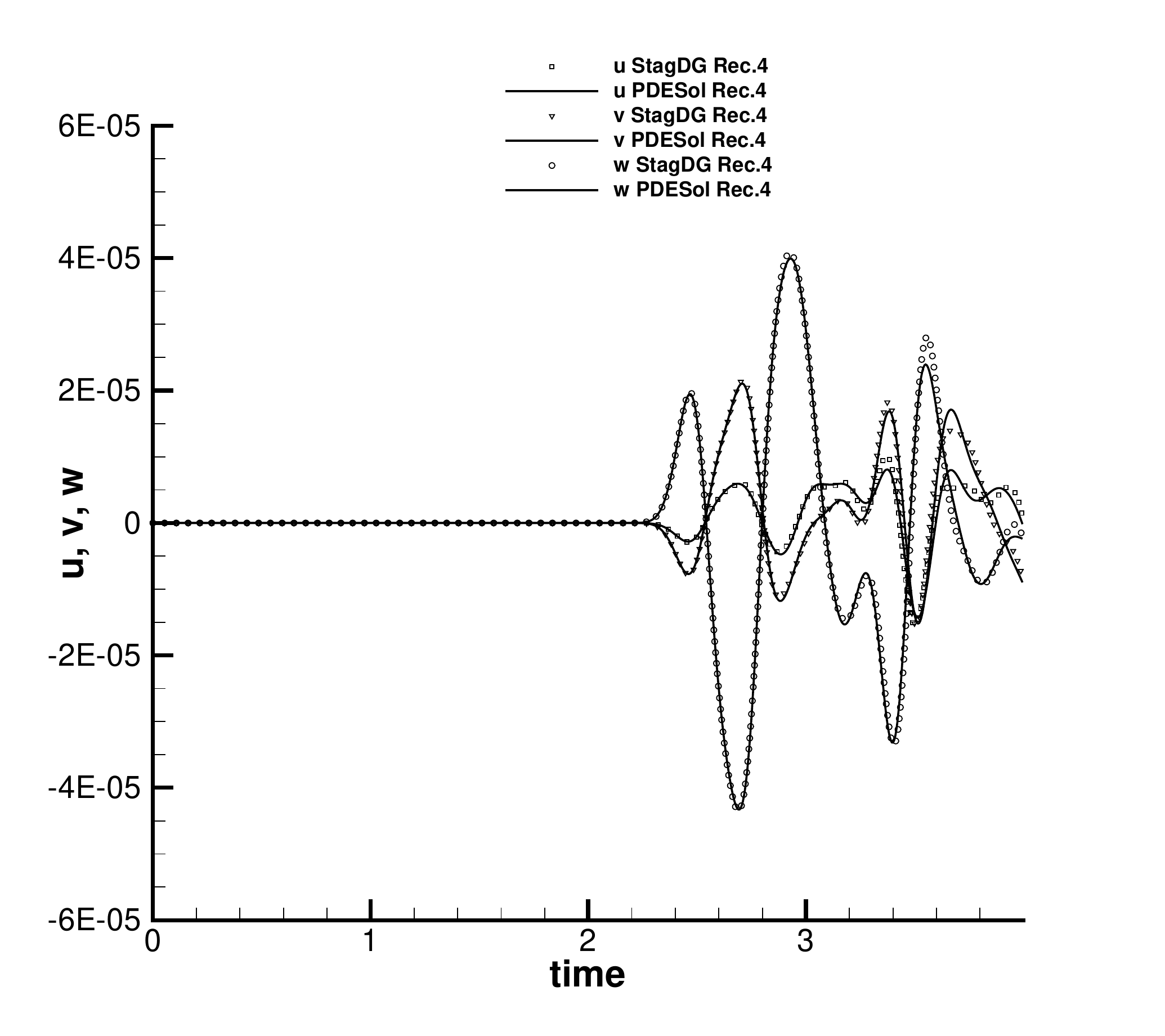}
\caption{Wave propagation test in a complex 3D geometry. Comparison of the numerical solution obtained with the new staggered DG scheme presented in this paper with the reference solution 
for receivers 1-4, respectively, from top left to bottom right. }
\label{fig.NTCG2}%
\end{figure*}

\begin{table}%
\begin{center}
\begin{tabular}{cccc}
	Receiver & $x$ 	& $y$	 & $z$ \\
	\hline
1&1000.000000 & 0.000000 &1397.723250 \\
2&1545.084972 &-4755.282581 &-3000.000000 \\
3&0.000000 &5000.000000 &3231.607925 \\
4&-5000.000000 &-8660.254038 &2494.481430 \\
	\hline
\end{tabular}
\end{center}
\caption{Receiver positions for the wave propagation test in complex 3D geometry.} 
\label{tab:NTCG3D_2}
\end{table}

\section{Conclusions}
\label{sec.concl} 

In this paper we have introduced a novel family of staggered space-time discontinuous Galerkin finite element schemes for the simulation of wave propagation in linear elastic media. The governing PDE  system is written in first order velocity-stress formulation. The key idea is the use of a staggered mesh, where the velocity field is defined on a primary mesh composed of simplex elements, i.e. triangles in 2D and tetrahedra in 3D. The stress tensor is defined on a face-based staggered dual mesh, which consists in quadrilateral elements in the 2D case and non-standard 5-point hexahedra in  
the 3D case. Arbitrary high order of accuracy in space and time are achieved via the use of space-time basis and test functions. The space-time DG method is fully implicit and therefore requires the
solution of a large sparse linear system. The number of unknowns can be easily reduced to the degrees of freedom of the velocity field by substituting the discrete Hooke law into the discrete 
momentum conservation law, which corresponds to the application of the Schur complement. The resulting linear system for the velocity is a discrete wave equation for the velocity and can be 
easily solved with modern iterative Krylov methods. For piecewise constant polynomials in time ($p_\gamma=0$) the final system can be proven to be symmetric and positive definite, hence it can
be efficiently solved with a matrix-free conjugate gradient method. In the general case ($p_\gamma \geq 1$) the system is non-symmetric and is therefore solved with a matrix-free implementation 
of the GMRES algorithm. The use of matrix-free iterative solvers allows a straightforward MPI parallelization of the algorithm on modern supercomputers. 
 
The main advantage of our new staggered space-time DG scheme is its unconditional stability and therefore its robustness with respect to the mesh quality. In particular in 
complex 3D geometries, it is very frequent that computational meshes generated even by modern mesh generation software produce so-called sliver elements, which are elements with a very  
high aspect ratio. Although our new method is unconditionally stable, for computational meshes that contain sliver elements, the linear system becomes ill-conditioned and therefore requires the 
use of a preconditioner. We have implemented two simple preconditioners: the first one is element-local and is based on the exact inverse of each block on the diagonal of the system matrix; 
the second one is more sophisticated and requires the inverse of the local system involving the element and its direct face neighbors. In numerical experiments we have found that the second 
preconditioner is fully sufficient to deal with sliver elements.  For the general case we can prove that the method is energy stable for arbitrary meshes and time step size. For the special case 
of a Crank-Nicolson time discretization, the method is proven to be exactly energy conserving. We have applied the method to a large set of test problems in two and three space dimensions and 
we have also studied the convergence of the scheme via numerical experiments on a smooth problem with exact solution. In all cases the new approach produces excellent results. 
The new numerical method presented in this paper is sufficiently general to allow varying material properties within each element and even anisotropic material behavior could be handled in 
principle. 

Future work will concern the extension of the method to dynamic rupture processes following the ideas outlined in \cite{Puente1,Puente2} for high order ADER-DG schemes. We 
furthermore plan to couple the present staggered space-time DG scheme with explicit ADER-DG methods on adaptive Cartesian meshes (AMR), see \cite{Zanotti2015,AMRDGSI}. Further work will 
also concern the generalization of the present scheme from simple linear elasticity to the equations of fully nonlinear hyperelasticity of Godunov and Romenski 
\cite{GodunovRomenski72} and their recent extension to a unified formulation of continuum mechanics achieved by Peshkov and Romenski and collaborators in \cite{PeshRom2014,GPRmodel,GPRmodelMHD}.   
Last but not least, we plan to extend our scheme to the Maxwell and MHD equations, where staggered meshes are necessary in order to enforce a divergence-free magnetic field. In particular, 
we plan to couple the present approach with some of the novel ideas recently outlined in \cite{BalsaraDGInd,SIMHD,BalsaraED1,BalsaraED2} concerning the use of multi-dimensional Riemann solvers
combined with appropriately staggered meshes for the solution of the Maxwell and MHD equations. 

\section*{Acknowledgements}

This research was funded by the European Union's Horizon 2020 
Research and Innovation Programme under the project \textit{ExaHyPE},
grant no. 671698 (call FETHPC-1-2014). The 3D simulations were performed 
on the HazelHen supercomputer at the HLRS in Stuttgart, Germany and 
on the SuperMUC supercomputer at the LRZ in Garching, Germany. 

\bibliographystyle{elsarticle-num}
\bibliography{SIDG}

\begin{thebibliography}{10}
\expandafter\ifx\csname url\endcsname\relax
  \def\url#1{\texttt{#1}}\fi
\expandafter\ifx\csname urlprefix\endcsname\relax\def\urlprefix{URL }\fi
\expandafter\ifx\csname href\endcsname\relax
  \def\href#1#2{#2} \def\path#1{#1}\fi

\bibitem{Madariaga1976}
R.~Madariaga, Dynamics of an expanding circular fault, Bulletin of the
  Seismological Society of America 66 (1976) 639--666.

\bibitem{Virieux1984}
J.~Virieux, Sh-wave propagation in heterogeneous media: Velocity--stress
  finite--difference method, Geophysics 49 (1984) 1933--1942.

\bibitem{Virieux1986}
J.~Virieux, P-sv wave propagation in heterogeneous media: Velocity--stress
  finite--difference method, Geophysics 51 (1984) 889--901.

\bibitem{Levander1988}
A.~Levander, Fourth-order finite difference p-sv seismograms, Geophysics 53
  (1988) 1425--1436.

\bibitem{Mora1989}
P.~Mora, Modeling anisotropic seismic waves in 3-d, SEG society of exploration
  Geophysicists (1989) 1039--1043.

\bibitem{Moczo2002}
P.~Moczo, J.~Kristek, V.~Vavrycuk, R.~Archuleta, L.~Halada, {3D heterogeneous
  staggered-grid finite-difference modeling of seismic motion with volume
  harmonic and arithmetic averaging of elastic moduli and densities}, Bulletin
  of the Seismological Society of America 92 (2002) 3042--3066.

\bibitem{Igel1995}
H.~Igel, P.~Mora, B.~Riollet, Anisotropic wave propagation through
  finite-difference grids, Geophysics 60 (1995) 1203--1216.

\bibitem{Tessmer1995}
E.~Tessmer, 3-d seismic modelling of general material anisotropy in the
  presence of the free surface by a chebyshev spectral method, Geophysical
  Journal International 121 (1995) 557--575.

\bibitem{Moczo2015}
E.~C. et~al., {3-D numerical simulations of earthquake ground motion in
  sedimentary basins: Testing accuracy through stringent models}, Geophysical
  Journal International 201 (2015) 90--111.

\bibitem{Moczo2017}
J.~Kristek, P.~Moczo, E.~Chaljub, M.~Kristekova, {An orthorhombic
  representation of a heterogeneous medium for the finite-difference modelling
  of seismic wave propagation}, Geophysical Journal International 208 (2017)
  1250--1264.

\bibitem{Tessmer1994}
E.~Tessmer, D.~Kosloff, 3-d elastic modelling with surface topography by a
  chebyshev spectral method, Geophysics 59 (1994) 464--473.

\bibitem{Igel1999}
H.~Igel, Wave propagation in three-dimensional spherical sections by the
  chebyshev spectral method, Geophysical Journal International 136 (1999)
  559--566.

\bibitem{Patera1984}
A.~T. Patera, A spectral-element method for fluid dynamics: laminar flow in a
  channel expansion, Journal of Computational Physics 144 (1984) 45--58.

\bibitem{Priolo1994}
E.~Priolo, J.~Carcione, G.~Seriani, Numerical simulation of interface waves by
  high-order spectral modeling techniques, Journal of Computational Physics 144
  (1984) 45--58.

\bibitem{Komatitsch1998}
D.~Komatitsch, J.~Vilotte, The spectral-element method: an efficient tool to
  simulate the seismic response of 2d and 3d geological structures, Bulletin of
  the Seismological Society of America 88 (1998) 368--392.

\bibitem{Seriani1998}
G.~Seriani, 3-d large-scale wave propagation modeling by a spectral-element
  method on a cray t3e multiprocessor, Computer Methods in Applied Mechanics
  and Engineering 164 (1998) 235--247.

\bibitem{Komatitsch1999}
D.~Komatitsch, J.~Tromp, Introduction to the spectral-element method for 3-d
  seismic wave propagation, Geophysical Journal International 139 (1999)
  806--822.

\bibitem{Komatitsch2002}
D.~Komatitsch, J.~Tromp, Spectral-element simulations of global seismic wave
  propagation—i. validation, Geophysical Journal International 149 (2002)
  390--412.

\bibitem{Hermann2008}
M.~K\"aser, V.~Hermann, J.~de~la Puente, {Quantitative accuracy analysis of the
  discontinuous Galerkin method for seismic wave propagation}, Geophysical
  Journal International 173 (2008) 990--999.

\bibitem{Moczo2010a}
P.~Moczo, J.~Kristek, M.~Galis, P.~Pazak, {On accuracy of the finite-difference
  and finite-element schemes with respect to P-wave to S-wave speed ratio},
  Geophysical Journal International 182 (2010) 493--510.

\bibitem{Moczo2006}
M.~Kristekov\'a, J.~Kristek, P.~Moczo, S.~Day, {Misfit criteria for
  quantitative comparison of seismograms}, Bulletin of the Seismological
  Society of America 96 (2006) 1836--1850.

\bibitem{Moczo2009}
M.~Kristekov\'a, J.~Kristek, P.~Moczo, {Time-frequency misfit and
  goodness-of-fit criteria for quantitative comparison of time signals},
  Geophysical Journal International 178 (2009) 813--825.

\bibitem{RemacleWave}
N.~Chevaugeon, K.~Hillewaert, X.~Gallez, P.~Ploumhans, J.~Remacle, {Optimal
  numerical parameterization of discontinuous Galerkin method applied to wave
  propagation problems}, Journal of Computational Physics 223 (2007) 188--207.

\bibitem{gij1}
M.~K\"aser, M.~Dumbser, {An arbitrary high-order discontinuous Galerkin method
  for elastic waves on unstructured meshes – I. The two-dimensional isotropic
  case with external source terms}, Geophysical Journal International 166
  (2006) 855--877.

\bibitem{gij2}
M.~Dumbser, M.~K\"aser, {An arbitrary high-order discontinuous Galerkin method
  for elastic waves on unstructured meshes – II. The three-dimensional
  isotropic case}, Geophysical Journal International 167 (2006) 319--336.

\bibitem{gij3}
M.~K\"aser, M.~Dumbser, J.~de~la Puerte, H.~Igel, {An arbitrary high-order
  Discontinuous Galerkin method for elastic waves on unstructured meshes – III.
  Viscoelastic attenuation}, Geophysical Journal International 168 (2007)
  224--242.

\bibitem{gij4}
J.~de~la Puerte, M.~K\"aser, M.~Dumbser, H.~Igel, {An arbitrary high-order
  discontinuous Galerkin method for elastic waves on unstructured meshes – IV.
  Anisotropy}, Geophysical Journal International 169 (2007) 1210--1228.

\bibitem{gij5}
M.~Dumbser, M.~K\"aser, E.~F. Toro, {An arbitrary high-order Discontinuous
  Galerkin method for elastic waves on unstructured meshes – V. Local time
  stepping and p-adaptivity}, Geophysical Journal International 171 (2007)
  695--717.

\bibitem{GroteDG}
M.~Grote, A.~Schneebeli, D.~Sch\"otzau, {Discontinuous Galerkin finite element
  method for the wave equation}, SIAM Journal on Numerical Analysis 44 (2006)
  2408--2431.

\bibitem{Antonietti1}
P.~Antonietti, I.~Mazzieri, A.~Quarteroni, F.~Rapetti, {Non-conforming high
  order approximations of the elastodynamics equation}, Computer Methods in
  Applied Mechanics and Engineering 209--212 (2012) 212--238.

\bibitem{Antonietti2}
P.~Antonietti, C.~Marcati, I.~Mazzieri, A.~Quarteroni, {High order
  discontinuous Galerkin methods on simplicial elements for the elastodynamics
  equation}, Numerical Algorithms 71 (2016) 181--206.

\bibitem{Taub2009}
A.~Taube, M.~Dumbser, C.~Munz, R.~Schneider, {A High Order Discontinuous
  Galerkin Method with Local Time Stepping for the Maxwell Equations},
  International Journal Of Numerical Modelling: Electronic Networks, Devices
  And Fields 22 (2009) 77--103.

\bibitem{GroteLTS1}
M.~Grote, T.~Mitkova, {High-order explicit local time-stepping methods for
  damped wave equations}, Journal of Computational and Applied Mathematics 239
  (2013) 270--289.

\bibitem{GroteLTS2}
M.~Grote, T.~Mitkova, {Explicit local time-stepping methods for Maxwell's
  equations}, Journal of Computational and Applied Mathematics 234 (2010)
  3283--3302.

\bibitem{DumbserCasulli}
M.~Dumbser, V.~Casulli, A staggered semi-implicit spectral discontinuous
  {Galerkin} scheme for the shallow water equations, Applied Mathematics and
  Computation 219~(15) (2013) 8057--8077.

\bibitem{2DSIUSW}
M.~Tavelli, M.~Dumbser, {A high order semi-implicit discontinuous Galerkin
  method for the two dimensional shallow water equations on staggered
  unstructured meshes}, Applied Mathematics and Computation 234 (2014)
  623--644.

\bibitem{2STINS}
M.~Tavelli, M.~Dumbser, {A staggered arbitrary high order semi-implicit
  discontinuous Galerkin method for the two dimensional incompressible
  Navier-Stokes equations}, Computers and Fluids 119 (2015) 235--249.

\bibitem{3DSIINS}
M.~Tavelli, M.~Dumbser, {A staggered, space-time discontinuous Galerkin method
  for the three-dimensional incompressible Navier-Stokes equations on
  unstructured tetrahedral meshes}, Journal of Computational Physics 319 (2016)
  294--323.

\bibitem{Fambri2016}
F.~Fambri, M.~Dumbser, {Spectral semi-implicit and space-time discontinuous
  Galerkin methods for the incompressible Navier-Stokes equations on staggered
  Cartesian grids}, Applied Numerical Mathematics 110 (2016) 41--74.

\bibitem{3DSICNS}
M.~Tavelli, M.~Dumbser, {A pressure-based semi-implicit space–time
  discontinuous Galerkin method on staggered unstructured meshes for the
  solution of the compressible Navier–Stokes equations at all Mach numbers},
  Journal of Computational Physics 341 (2017) 341--376.

\bibitem{AMRDGSI}
F.~Fambri, M.~Dumbser, Semi-implicit discontinuous {G}alerkin methods for the
  incompressible {N}avier-{S}tokes equations on adaptive staggered {C}artesian
  grids, Computer Methods in Applied Mechanics and Engineering 324 (2017)
  170--203.

\bibitem{markerandcell}
F.~Harlow, J.~Welch, Numerical calculation of time-dependent viscous
  incompressible flow of fluid with a free surface, Physics of Fluids 8 (1965)
  2182--2189.

\bibitem{chorin1}
A.~Chorin, A numerical method for solving incompressible viscous flow problems,
  Journal of Computational Physics 2 (1967) 12--26.

\bibitem{chorin2}
A.~Chorin, Numerical solution of the {Navier--Stokes} equations, Mathematics of
  Computation 23 (1968) 341--354.

\bibitem{Bell1989}
J.~B. Bell, P.~Coletta, H.~M. Glaz, A second-order projection method for the
  incompressible {Navier-Stokes} equations, Journal of Computational Physics 85
  (1989) 257--283.

\bibitem{CasulliCheng1992}
V.~Casulli, R.~T. Cheng, Semi-implicit finite difference methods for
  three--dimensional shallow water flow, International Journal for Numerical
  Methods in Fluids 15 (1992) 629--648.

\bibitem{patankar}
V.~Patankar, Numerical {Heat} {Transfer} and {Fluid} {Flow}, Hemisphere
  Publishing Corporation, 1980.

\bibitem{vanKan}
J.~van Kan, {A second-order accurate pressure correction method for viscous
  incompressible flow}, SIAM Journal on Scientific and Statistical Computing 7
  (1986) 870--891.

\bibitem{BalsaraSpicer1999}
D.~Balsara, D.~Spicer, A staggered mesh algorithm using high order godunov
  fluxes to ensure solenoidal magnetic fields in magnetohydrodynamic
  simulations, Journal of Computational Physics 149 (1999) 270--292.

\bibitem{Balsara2001b}
D.~S. {Balsara}, {Divergence-Free Adaptive Mesh Refinement for
  Magnetohydrodynamics}, Journal of Computational Physics 174 (2001) 614--648.

\bibitem{Casulli2014}
V.~Casulli, A semi-implicit numerical method for the free-surface
  {Navier-Stokes} equations, International Journal for Numerical Methods in
  Fluids 74 (2014) 605--622.

\bibitem{DumbserCasulli2016}
M.~Dumbser, V.~Casulli, {A conservative, weakly nonlinear semi-implicit finite
  volume scheme for the compressible Navier-Stokes equations with general
  equation of state}, Applied Mathematics and Computation 272 (2016) 479--497.

\bibitem{Moczo2010b}
J.~Kristek, P.~Moczo, M.~Galis, {Stable discontinuous staggered grid in the
  finite-difference modelling of seismic motion}, Geophysical Journal
  International 183 (2010) 1401--1407.

\bibitem{Puente3}
F.~Rubio, M.~Hanzich, A.~Farr\'es, J.~de~la Puente, J.~M. Cela,
  {Finite-difference staggered grids in GPUs for anisotropic elastic wave
  propagation simulation}, Computers and Geosciences 70 (2014) 181--189.

\bibitem{Puente4}
J.~de~la Puente, M.~Ferrar, M.~Hanzich, J.~Castillo, J.~Cela, {Mimetic seismic
  wave modeling including topography on deformed staggered grids}, Geophysics
  79 (2014) T125--T141.

\bibitem{StaggeredDG}
E.~Chung, C.~Lee, {A staggered discontinuous Galerkin method for the
  convection--diffusion equation}, Journal of Numerical Mathematics 20 (2012)
  1--31.

\bibitem{StaggeredDG2}
E.~Chung, P.~Ciarlet, T.~Yu, {Convergence and superconvergence of staggered
  discontinuous Galerkin methods for the three--dimensional Maxwell's equations
  on Cartesian grids}, Journal of Computational Physics 235 (2013) 14--31.

\bibitem{StaggeredDGCE1}
E.~Chung, B.~Engquist, {Optimal discontinuous Galerkin methods for wave
  propagation}, SIAM Journal on Numerical Analysis 44 (2006) 2131--2158.

\bibitem{StaggeredDGCE2}
E.~Chung, B.~Engquist, {Optimal discontinuous Galerkin methods for the acoustic
  wave equation in higher dimensions}, SIAM Journal on Numerical Analysis 47
  (2009) 3820--3848.

\bibitem{StaggeredDGCE3}
E.~Chung, T.~Yu, {Staggered-grid spectral element methods for elastic wave
  simulations}, Journal of Computational and Applied Mathematics 285 (2015)
  132--150.

\bibitem{CentralDG1}
Y.~J. Liu, C.~W. Shu, E.~Tadmor, M.~Zhang, Central discontinuous galerkin
  methods on overlapping cells with a non-oscillatory hierarchical
  reconstruction, SIAM Journal on Numerical Analysis 45 (2007) 2442--2467.

\bibitem{CentralDG2}
Y.~J. Liu, C.~W. Shu, E.~Tadmor, M.~Zhang, L2-stability analysis of the central
  discontinuous galerkin method and a comparison between the central and
  regular discontinuous galerkin methods, Mathematical Modeling and Numerical
  Analysis 42 (2008) 593--607.

\bibitem{spacetimedg1}
J.~J.~W. van~der Vegt, H.~van~der Ven, Space–-time discontinuous {Galerkin}
  finite element method with dynamic grid motion for inviscid compressible
  flows {I}. general formulation, Journal of Computational Physics 182 (2002)
  546–--585.

\bibitem{spacetimedg2}
H.~van~der Ven, J.~J.~W. van~der Vegt, Space-–time discontinuous {Galerkin}
  finite element method with dynamic grid motion for inviscid compressible
  flows {II}. efficient flux quadrature, Comput. Methods Appl. Mech. Engrg. 191
  (2002) 4747–--4780.

\bibitem{KlaijVanDerVegt}
C.~Klaij, J.~J. W.~V. der Vegt, H.~V. der Ven, {Space-time discontinuous
  Galerkin method for the compressible Navier-Stokes equations}, Journal of
  Computational Physics 217 (2006) 589--611.

\bibitem{Rhebergen2012}
S.~Rhebergen, B.~Cockburn, {A space–time hybridizable discontinuous Galerkin
  method for incompressible flows on deforming domains}, Journal of
  Computational Physics 231 (2012) 4185--4204.

\bibitem{Rhebergen2013}
S.~Rhebergen, B.~Cockburn, J.~J. van~der Vegt, {A space-time discontinuous
  Galerkin method for the incompressible Navier-Stokes equations}, Journal of
  Computational Physics 233 (2013) 339--358.

\bibitem{Balazsova1}
M.~Balazsova, M.~Feistauer, {On the stability of the ALE space-time
  discontinuous Galerkin method for nonlinear convection-diffusion problems in
  time-dependent domains}, Applications of Mathematics 60 (2015) 501--526.

\bibitem{Balazsova2}
M.~Balazsova, M.~Feistauer, M.~Hadrava, A.~Kosik, {On the stability of the
  space-time discontinuous Galerkin method for the numerical solution of
  nonstationary nonlinear convection-diffusion problems}, Journal of Numerical
  Mathematics 23 (2015) 211--233.

\bibitem{Antonietti3}
P.~Antonietti, I.~Mazzieri, A.~Quarteroni, F.~Rapetti, {High order space-time
  discretization for elastic wave propagation problems}, in: M.~Azaiez, H.~E.
  Fekihand, J.~Hestaven (Eds.), Proceedings of ICOSAHOM 2012, LNCSE, Vol.~95,
  Springer Verlag, 2014, pp. 87--97.

\bibitem{Antonietti4}
P.~Antonietti, N.~D. Santo, I.~Mazzieri, A.~Quarteroni, {A high-order
  discontinuous Galerkin approximation to ordinary differential equations with
  applications to elastodynamics}, IMA Journal of Numerical Analysis.

\bibitem{BedfordDrumheller}
A.~Bedford, D.~Drumheller, Elastic Wave Propagation, Wiley, Chichester, UK,
  1994.

\bibitem{SINS}
M.~Tavelli, M.~Dumbser, {A staggered arbitrary high order semi-implicit
  discontinuous Galerkin method for the two dimensional incompressible
  Navier-Stokes equations}, Applied Mathematics and Computation 248 (2014)
  70--92.

\bibitem{Bermudez1998}
A.~Bermudez, A.~Dervieux, J.~Desideri, M.~Vazquez, Upwind schemes for the
  two--dimensional shallow water equations with variable depth using
  unstructured meshes, Computer Methods in Applied Mechanics and Engineering
  155 (1998) 49--72.

\bibitem{Bermudez2014}
A.~Berm\'udez, J.~Ferr\'in, L.~Saavedra, M.~V\'azquez-Cend\'on, {A projection
  hybrid finite volume/element method for low-Mach number flows}, Journal of
  Computational Physics 271 (2014) 360--378.

\bibitem{Busto2018}
S.~Busto, J.~Ferr\'in, E.~Toro, M.~V\'azquez-Cend\'on, {A projection hybrid
  high order finite volume/finite element method for incompressible turbulent
  flows}, Journal of Computational Physics 353 (2018) 169--192.

\bibitem{USFORCE}
E.~F. Toro, A.~Hidalgo, M.~Dumbser, {FORCE} schemes on unstructured meshes {I}:
  Conservative hyperbolic systems, Journal of Computational Physics 228 (2009)
  3368--–3389.

\bibitem{USFORCE2}
M.~Dumbser, A.~Hidalgo, M.~Castro, C.~Par\'es, E.~F. Toro, {FORCE} schemes on
  unstructured meshes {II}: Non--conservative hyperbolic systems, Computer
  Methods in Applied Mechanics and Engineering 199 (2010) 625--647.

\bibitem{GMRES}
Y.~Saad, M.~Schultz, {GMRES:} a generalized minimal residual algorithm for
  solving nonsymmetric linear systems, SIAM Journal on Scientific and
  Statistical Computing 7 (1986) 856–--869.

\bibitem{DumbserFacchini}
M.~Dumbser, M.~Facchini, {A local space-time discontinuous Galerkin method for
  Boussinesq--type equations}, Applied Mathematics and Computation 272 (2016)
  336--346.

\bibitem{SeisSol1}
A.~Breuer, A.~Heinecke, M.~Bader, C.~Pelties, {Accelerating SeisSol by
  generating vectorized code for sparse matrix operators}, Advances in Parallel
  Computing 25 (2014) 347--356.

\bibitem{SeisSol2}
A.~Breuer, A.~Heinecke, S.~Rettenberger, M.~Bader, A.~Gabriel, C.~Pelties,
  {Sustained petascale performance of seismic simulations with SeisSol on
  SuperMUC}, Lecture Notes in Computer Science (LNCS) 8488 (2014) 1--18.

\bibitem{Dumbser2008}
M.~Dumbser, D.~S. Balsara, E.~F. Toro, C.~D. Munz, A unified framework for the
  construction of one-step finite-volume and discontinuous {Galerkin} schemes,
  Journal of Computational Physics 227 (2008) 8209--–8253.

\bibitem{metis}
G.~Karypis, V.~Kumar, Multilevel k-way partitioning scheme for irregular
  graphs, J. Parallel Distrib. Comput. 48 (1998) 96--129.

\bibitem{Puente1}
J.~de~la Puente, J.~Ampuero, M.~K\"aser, {Dynamic rupture modeling on
  unstructured meshes using a discontinuous Galerkin method}, Journal of
  Geophysical Research: Solid Earth 114 (2012) B10302.

\bibitem{Puente2}
C.~Pelties, J.~de~la Puente, J.~Ampuero, G.~Brietzke, M.~K\"aser,
  {Three-dimensional dynamic rupture simulation with a high-order discontinuous
  Galerkin method on unstructured tetrahedral meshes}, Journal of Geophysical
  Research: Solid Earth 117 (2012) B02309.

\bibitem{Zanotti2015}
O.~Zanotti, F.~Fambri, M.~Dumbser, A.~Hidalgo, {Space-time adaptive ADER
  discontinuous Galerkin finite element schemes with a posteriori sub-cell
  finite volume limiting}, Computers and Fluids 118 (2015) 204--224.

\bibitem{GodunovRomenski72}
S.~K. Godunov, E.~I. Romenski, Nonstationary equations of the nonlinear theory
  of elasticity in {Euler} coordinates., Journal of Applied Mechanics and
  Technical Physics 13 (1972) 868--885.

\bibitem{PeshRom2014}
I.~Peshkov, E.~Romenski, A hyperbolic model for viscous {{N}ewtonian} flows,
  Continuum Mechanics and Thermodynamics 28 (2016) 85--104.

\bibitem{GPRmodel}
M.~Dumbser, I.~Peshkov, E.~Romenski, O.~Zanotti, {High order ADER schemes for a
  unified first order hyperbolic formulation of continuum mechanics: Viscous
  heat-conducting fluids and elastic solids}, Journal of Computational Physics
  314 (2016) 824--862.

\bibitem{GPRmodelMHD}
M.~Dumbser, I.~Peshkov, E.~Romenski, O.~Zanotti, {H}igh order {ADER} schemes
  for a unified first order hyperbolic formulation of {N}ewtonian continuum
  mechanics coupled with electro-dynamics, Journal of Computational Physics 348
  (2017) 298--342.

\bibitem{BalsaraDGInd}
D.~Balsara, R.~K\"appeli, {Von Neumann stability analysis of globally
  divergence-free RKDG schemes for the induction equation using
  multidimensional Riemann solvers}, Journal of Computational Physics 336
  (2017) 104--127.

\bibitem{SIMHD}
M.~Dumbser, D.~Balsara, M.~Tavelli, F.~Fambri, A divergence-free semi-implicit
  finite volume scheme for ideal, viscous and resistive magnetohydrodynamics,
  International Journal for Numerical Methods in FluidsSubmitted to.
  https://arxiv.org/abs/1801.06592.

\bibitem{BalsaraED1}
D.~Balsara, A.~Taflove, S.~Garain, G.~Montecinos, {Computational
  electrodynamics in material media with constraint-preservation,
  multidimensional Riemann solvers and sub-cell resolution – Part I,
  second-order FVTD schemes}, Journal of Computational Physics 349 (2017)
  604--635.

\bibitem{BalsaraED2}
D.~Balsara, S.~Garain, A.~Taflove, G.~Montecinos, {Computational
  electrodynamics in material media with constraint-preservation,
  multidimensional Riemann solvers and sub-cell resolution – Part II, higher
  order FVTD schemes}, Journal of Computational Physics 354 (2018) 613--645.

\end{thebibliography}

\appendix 

\section{Preconditioning}
\label{App_pre}
Here we give some more details on how to implement the two simple preconditioners used to solve the sliver element test problem. 
For the first preconditioner, we only take the diagonal block of system \eqref{eq:velocity.sys}, which for the high order staggered space-time DG method reads 
\begin{equation} 
\mathbf{D}_i = \bar{\M}_i \, \hat{\rho}_i - \sum\limits_{j \in S_i} \D_{i,j} \M_j^{-1}  \hat{\E}_j \cdot \Q_{i,j}. 
\label{eq:pre1}
\end{equation} 
We then exactly invert it for each element and use the block-diagonal matrix $\mathcal{P}_1=\textnormal{diag} \left( \mathbf{D}_1^{-1}, \cdots \mathbf{D}_i^{-1}, \cdots \mathbf{D}_{\Ni}^{-1} \right)$ 
as preconditioner number one. 

The second preconditioner is more sophisticated and locally inverts a small linear system for each element involving also its neighbor elements. Let us locally renumber the elements 
around $\TT_i^{st}$ so that $i \to 0$ and the adjacent face neighbors are numbered as $\p(i,j) = \left\{ 1, 2, 3 \right\}$ in 2D and $\p(i,j) = \left\{ 1, 2, 3, 4 \right\}$ in 3D. 
Let us denote the contributions of the neighbors to the linear system by the off-diagonal blocks $\mathbf{N}_{l,m}$, which represent the contribution of element $m$ on element $l$.  
For the second preconditioner we now assemble a local system around $\TT_i^{st}$ which involves $\TT_i^{st}$ and its direct face neighbors and which constitute a local stencil $\mathcal{S}_i$. 
In the following we will denote by $|\mathcal{S}_i| = d + 2 $ the number of elements contained in the local stencil, where $d$ is the number of space dimensions. Using the renumbering of the 
elements, the auxiliary system matrix $\mathbf{A}_i$ for the  local  system reads in 2D 
\begin{eqnarray}
\mathbf{A}_i=
\left( 
\begin{array}{cccc}
	\mathbf{D}_0      & \mathbf{N}_{0,1} & \mathbf{N}_{0,2}  &  \mathbf{N}_{0,3}  \\
	\mathbf{N}_{1,0}  & \mathbf{D}_1     & \mathbf{N}_{1,2}  &  \mathbf{N}_{1,3}  \\
	\mathbf{N}_{2,0}  & \mathbf{N}_{2,1} & \mathbf{D}_2      &  \mathbf{N}_{2,3}  \\
	\mathbf{N}_{3,0}  & \mathbf{N}_{3,1} & \mathbf{N}_{3,2}  &  \mathbf{D}_3      \\
\end{array}
\right).
\label{eq:APPRE1}
\end{eqnarray}
The matrix $\mathbf{A}_i$ has dimension $N^{st}_{\phi} \cdot d \cdot |\mathcal{S}_i|$, and so it is easily invertible in a preprocessing step using a direct solver. 
We can therefore compute its inverse $\mathbf{A}_i^{-1}$ for each element and store its first row of blocks. With $\mathbf{A}_i^{-1}(e_1,e_2)$ for $e_1,e_2=0 \ldots |\mathcal{S}_i|-1$ 
we will denote the subblock in the inverse of $\mathbf{A}_i$ which corresponds to the interaction of element $e_1$ with $e_2$. The action of the preconditioner matrix $\mathcal{P}_2$ 
is then given for each element $i$ by 
\begin{eqnarray}
	\mathcal{P}_2 \hat{\vec{v}}^{n+1}_i = \sum \limits_{e=0}^{|\mathcal{S}_i|-1 } \mathbf{A}_i^{-1}(0,e) \hat{\vec{v}}^{n+1}_g,
\label{eq:APPRE2}
\end{eqnarray}
where $g=g(e)$ corresponds again to the global element number of the local index $e$. 
The computational cost of this preconditioner is $N^{st}_{\phi} \cdot d \cdot |\mathcal{S}_i| \cdot \Ni$ and so is of the same order of the matrix-vector product 
required in the iterative solver. 

\end{document}